\DeclareMathOperator{\Id}{Id}
\DeclareMathOperator{\diag}{diag}
\DeclareMathOperator{\Ad}{Ad}
\DeclareMathOperator{\trace}{trace}
\DeclareMathOperator{\Pol}{Pol}
\renewenvironment{proof}[1][Proof]{\textbf{#1.} }
{\ \rule{0.5em}{0.5em}}
\newtheorem{theorem}{Theorem}
\newtheorem{prop}{Proposition}
\newtheorem{lemma}{Lemma}
\newtheorem{quest}{Question}
\theoremstyle{definition}
\newtheorem{remark}{Remark}
\begin{document}

\title
[The classification \dots]
{The classification of $\delta$-homogeneous Riemannian manifolds with positive Euler characteristic}
\author{V.N.~Berestovski\u\i, E.V.~Nikitenko, Yu.G.~Nikonorov}

\address{Berestovski\u\i\  Valeri\u\i\  Nikolaevich \newline
Omsk Branch of Sobolev Institute of Mathematics SD RAS \newline
644099, Omsk, ul. Pevtsova,
13,Russia}
\email{berestov@ofim.oscsbras.ru}

\address{Nikitenko Evgeni\u\i\  Vitalievich\newline
Rubtsovsk Industrial Institute \newline of Altai State Technical
University after I.I.~Polzunov \newline 658207, Rubtsovsk,
ul. Traktornaya, 2/6, Russia}
\email{nikit@inst.rubtsovsk.ru}

\address{Nikonorov\ Yuri\u\i\  Gennadievich\newline
Rubtsovsk Industrial Institute \newline of Altai State Technical
University after I.I.~Polzunov \newline 658207, Rubtsovsk,
ul. Traktornaya, 2/6, Russia}
\email{nik@inst.rubtsovsk.ru}

\begin{abstract}

The authors give a short survey of previous results on $\delta$-homogeneous Riemannian
manifolds, forming a new proper subclass of geodesic orbit spaces with non-negative
sectional curvature, which properly includes the class of all normal homogeneous
Riemannian manifolds. As a continuation and an application of these results, they prove that the family of
all compact simply connected indecomposable $\delta$-homogeneous Riemannian manifolds with positive Euler
characteristic, which are not normal homogeneous, consists exactly of all generalized flag manifolds
$Sp(l)/U(1)\cdot Sp(l-1)=\mathbb{C}P^{2l-1}$, $l\geq 2$,
supplied with invariant Riemannian metrics of positive sectional curvature with
the pinching constants (the ratio of the minimal sectional curvature to the maximal one)
in the open interval $(1/16, 1/4)$. This implies very unusual geometric properties of the adjoint representation
of $Sp(l)$, $l\geq 2$. Some unsolved questions are suggested.

\vspace{2mm}
\noindent
2000 Mathematical Subject Classification: 53C20 (primary),
53C25, 53C35 (secondary).

\vspace{2mm} \noindent Key words and phrases: homogeneous spaces,
homogeneous spaces of positive Euler characteristic, g.o. spaces, normal
homogeneous Riemannian manifolds, $\delta$-homogeneous Riemannian manifolds,
Clifford-Wolf translations, Clifford-Wolf homogeneous Riemannian manifolds, geodesics.
\end{abstract}

\maketitle

\section{Introduction}

In this paper we finish the classification of compact simply connected
indecomposable $\delta$-homogeneous, but not normal homogeneous, Riemannian manifolds with positive Euler
characteristic (see the previous papers \cite{BerPl}, \cite{BerNik}, \cite{BerNik1}, \cite{BerNik2}, \cite{BerNik3}).
Thus it is appropriate to give
in this introduction a short informal survey of results obtained for $\delta$-homogeneous spaces and indicate some
unsolved questions.

Let us begin with a short description of well-known classes of
Riemannian homogeneous manifolds closely related to the object of this paper.

The Riemannian symmetric spaces introduced and classified by E.~Cartan in \cite{Ca} are the best-studied.
The Riemannian symmetric spaces form a proper subclass in the class of naturally reductive homogeneous
Riemannian manifolds defined by K.~Nomizu \cite{KN},
and in the class of weakly symmetric Riemannian manifolds introduced by A.~Selberg
\cite{S}. Any Riemannian symmetric space of nonnegative sectional curvature is a normal homogeneous Riemannian
manifold in sense of M.~Berger \cite{Berg}. Any normal homogeneous Riemannian manifold is naturally reductive.
Finally, all Riemannian manifolds listed above are geodesic orbit (g.o.) spaces. The latter spaces have been defined
and studied at the first time by O.~Kowalski and L.~Vanhecke in the paper \cite{KV}. The
assertion that weakly symmetric Riemannian manifolds are g.o. spaces have been proved in the paper \cite{BKV}.
A.~Selberg proved in \cite{S} that any weakly symmetric space is a commutative space. It follows from \cite{AV},
that in the compact case, any commutative space is weakly symmetric. On the other hand, this is not true in
general \cite{La1}.
It is interesting that a smooth connected Riemannian manifold is homogeneous if the
isometrically invariant differential operators on the manifold form a commutative
algebra \cite{Mao}.

A Riemannian manifold $(M,\mu)$ is \textit{symmetric} if for any its point $x$
there is an isometry $f$ of $(M,\mu)$ such that $f(\gamma(t))=\gamma(-t)$ for all (arc-length parameterized)
geodesics $\gamma(t)$, $t\in \mathbb{R}$, with $\gamma(0)=x$.
A manifold $(M,\mu)$ is \textit{weakly symmetric} if for any geodesic $\gamma(t)$, $t\in \mathbb{R}$,
there is an isometry $f$ such that $f(\gamma(t))=\gamma(-t)$ (for all $t\in \mathbb{R}$).
$(M,\mu)$ is \textit{commutative} if it admits a transitive isometry Lie group $G$ such that the algebra of
$G$-invariant differential operators is commutative (see also \cite{W1}). $(M,\mu)$ is \textit{normal homogeneous}
(respectively \textit{naturally reductive homogeneous}) if there is a transitive isometry Lie group of $(M,\mu)$ with
the stabilizer subgroup $H\subset G$ at some point $x\in M$ and a bi-invariant (non-degenerate) Riemannian
(respectively, semi-Riemannian) metric tensor $\nu$ on $G$ such that the natural projection
$p:(G,\nu)\rightarrow (M=G/H,\mu)$ is a (semi-)Riemannian submersion \cite{FIP}. The latter means that for any
element $g\in G$, the differential $dp(g)$ maps isometrically $(\ker dp(g))^\perp$ onto the tangent Euclidean
space $(M_{p(g)},\mu(p(g)))$. If this condition is satisfied for a particular Lie group $G$, we say also that
$(M=G/H,\mu)$ is $G$-normal homogeneous (respectively, $G$-naturally reductive). A Riemannian manifold $(M,\mu)$
is \textit{geodesic orbit}
(g.o.) if any its geodesic is an orbit of some one-parameter group of isometries.
All the Riemannian manifolds above are homogeneous.

In this paper we study $\delta$-homogeneous Riemannian manifolds, which can be
considered as a nearest metric generalization of normal homogeneous spaces.
Let us remark at first that, as a corollary of results in \cite{BerG}, the
above projection $p$ is a Riemannian submersion if and only if it is a \textit{submetry}. This means that, with
respect to the corresponding induced inner metrics $\rho_G$
and $\rho_M$, $p$ maps any closed ball $B_{\rho_G}(g,r), r\geq 0, g\in G,$ onto the closed ball $B_{\rho_M}(p(g),r)$.
Now we shall get a definition of a $\delta$-homogeneous
Riemannian manifold $(M,\mu)$ if in the above definition of a normal homogeneous manifold
we change the metric tensor $\nu$ to an inner metric $\rho_G$, not necessarily induced by the metric tensor $\nu$,
and the condition for $p$ to be a Riemannian submersion to the condition to be a submetry. If this condition is
satisfied for a particular Lie group $G$, we say also that $(M=G/H,\mu)$ is \textit{$G$-$\delta$-homogeneous}.
This discussion implies that every normal homogeneous Riemannian manifold is $\delta$-homogeneous.

Note that any bi-invariant inner metric $\rho_G$ on a Lie group $G$ is Finsler, i.e. it is induced by a
$\Ad(G)$-invariant norm $||\cdot||$ on the Lie algebra $\mathfrak{g}$ of $G$ \cite{Ber}. In fact, for the main,
compact case, in the above definition of $\delta$-homogeneity, one can take as $G$ the connected unit component
in the full isometry group $I(M)$ of $(M,\mu)$, and as $\rho_G$ the inner metric on
$G$, induced by the bi-invariant metric $d(g,h)=\max_{x\in M}\rho_M(g(x),h(x))$.
Then $\mathfrak{g}$ is naturally identified with the Lie algebra of Killing vector fields on $(M,\mu)$,
and the corresponding norm $||\cdot||$ will be the  \textit{Chebyshev norm}:
$||X||=\max_{x\in M}\sqrt{\mu(X(x),X(x))}$ \cite{BerNik, BerNik2}. The Chebyshev
norm is the minimal norm among all ($\Ad(G)$-invariant) norms on $\mathfrak{g}$,
satisfying the above definition of the $\delta$-homogeneity \cite{BerNik2}.

In fact, the above mentioned projection $p$ is a submetry if and only if its tangent linear map
$dp(e):(\mathfrak{g},||\cdot||)\rightarrow (M_{p(e)}, \mu(p(e)))$ of normed vector spaces is a submetry. In turn,
the last map is a submetry if and only if for every vector $v\in M_{p(e)}$ there is an element $X\in \mathfrak{g}$
such that
$X(p(e))=v$ and $||X||=\sqrt{\mu(X(p(e)),X(p(e)))}$ (this can be considered as a second
definition of the $\delta$-homogeneity). We refer to such vector field $X$ as a \textit{$\delta$-vector}. It necessarily
possess the property that its integral path through the point $p(e)$ is a geodesic in $(M,\mu)$, or in generally
accepted
terminology, it is a \textit{geodesic (g.o.) vector}. This implies that every $\delta$-homogeneous Riemannian
manifold is g.o. Also, this permits to apply the known properties of g.o.-vectors to study $\delta$-vectors.
Another simple but very useful fact is that for every compact homogeneous Riemannian manifold $(M,\mu)$, not
necessarily $\delta$-homogeneous, the $\Ad(G)$-orbit of any element in $\mathfrak{g}$ contains at least one
$\delta$-vector. Then, if $G$ is a matrix Lie group, hence $\mathfrak{g}$ is a matrix Lie algebra, one can use
the property that all elements of an $\Ad(G)$-orbit have one and the same characteristic polynomial. All the last
three properties are really used in the last section of \cite{BerNik} and Sections \ref{SO} and \ref{SP} of this paper
and give a very efficient method of the study.

Indeed, we can give a much simpler definition of $\delta$-homogeneity, which may be
applied to an arbitrary metric space: a metric space $(M,\rho_M)$ is $\delta$-homogeneous if for any two points $x$ and
$y$ from $M$, there exists an isometry $f$ of the
space $M$ onto itself which moves $x$ to $y$ and has the maximal displacement at the point $x$
(this means that $f(x)=y$ and $\rho_M(x,f(x))\geq \rho_M(z,f(z))$ for all points
$z\in M$) \cite{BerPl}. The equivalence of the above three definitions of the $\delta$-homogeneity for a
Riemannian manifold $(M,\mu)$ with the induced inner metric $\rho_M$ is proved in \cite{BerNik}.
Changing in the latter (metric) definition of the $\delta$-homogeneity
the inequality $\rho_M(x,f(x))\geq \rho_M(z,f(z))$ to the
equality $\rho_M(x,f(x))=\rho_M(z,f(z))$, we get a definition of a \textit{Clifford-Wolf homogeneous} metric space,
\cite{BerPl, BerNik4, BerNik5}.

Using the third, metric, definition of $\delta$-homogeneity, and methods of metric geometry, the authors of
\cite{BerPl} proved that every locally compact $\delta$-homogeneous inner (length) metric space with Aleksandrov
curvature bounded below has nonnegative Aleksandrov curvature. Such space is Riemannian $\delta$-homogeneous manifold
if and only if it
is finite-dimensional. Thus, as a corollary, we get that every $\delta$-homogeneous Riemannian manifold has
nonnegative sectional curvature. The last statement can be proved also by methods of Riemannian geometry.
Namely, to get this proof, one can simply combine the second definition
of $\delta$-homogeneous Riemannian manifold and Theorem 1 in \cite{BerNik7}, which implies that
$\mu(R(X(x),u)u,X(x))\geq 0$ for every nontrivial Killing vector field $X$ on a Riemannian manifold $(M,\mu)$,
attaining its maximal length at a point $x$, and every vector $u\in M_x$.

Now we state the main old and new results about $\delta$-homogeneous (particularly,
Clifford-Wolf homogeneous) Riemannian manifolds.

When we started to study $\delta$-homogeneous manifolds, having in
mind the above mentioned minimal property of the Chebyshev norm $||\cdot||$, we tried to prove the converse
statement by showing that for a $\delta$-homogeneous Riemannian
manifold $(G/H,\mu)$, the map $p:(G, \nu)\rightarrow (G/H,\mu)$ is a Riemannian submersion for a bi-invariant
Rimennian metric $\nu$, whose unit closed ball at $(G_e=\frak{g},\nu(e))$ is the Loewner-John ellipsoid for unit
closed ball $B$ at $(\frak{g},||\cdot||)$, i.e. the (unique) ellipsoid of maximal volume, inscribed into $B$.
But the last assertion has turned out to be false even for such normal homogeneous
Riemannian manifolds as spheres with non-constant positive sectional curvature \cite{BerNik2}.

Let us indicate some general properties of $\delta$-homogeneous Riemannian manifold,
which have been discussed in \cite{BerNik1, BerNik3} and proven in \cite{BerNik}.
Any such manifold has nonnegative sectional curvature and is a direct metric product
of an Euclidean space and compact indecomposable $\delta$-homogeneous Riemannian
manifolds (with possible omission of the mentioned factors). Conversely, any direct metric product of
$\delta$-homogeneous Riemannian manifolds is $\delta$-homogeneous.
Any locally isometric (particularly, universal) covering of every $\delta$-homogeneous
Riemannian manifold is itself $\delta$-homogeneous. All these assertions are true also for Clifford-Wolf
(CW-) homogeneous Riemannian manifolds. It follows from these results that the study of $\delta$- or CW-homogeneous
spaces mainly (even not
entirely) reduces to the case of indecomposable compact simply connected manifolds.

The main result of the papers \cite{BerNik, BerNik1} (which will be discussed
in more details later) is that the $\delta$-homogeneous Riemannian manifolds form a new subclass of g.o.
Riemannian manifolds, which contains the class of all normal homogeneous Riemannian manifolds, but does not
coincide with it. At the same time, the classes of
homogeneous spaces $G/H$ (with compact subgroup $H$) admitting invariant Riemannian metrics of any type: normal
homogeneous, $\delta$-homogeneous, or metrics with nonnegative sectional curvature, are all coincide \cite{BerNik1}.
The Riemannian space of any of these three types admits a transitive Lie group with compact Lie algebra, but the
full isometry group has compact Lie algebra only if the Euclidean factor has dimension no more than one.
Any space $G/H$ such that the Lie algebra of $G$ is compact, admits an invariant normal homogeneous metric,
 hence the metrics of other two types.

Another, very important result, which will be applied in this paper, states that any closed totally geodesic
submanifold of a $\delta$-homogeneous (respectively, g.o.) Riemannian manifold is itself $\delta$-homogeneous
(respectively, g.o.) \cite{BerNik}.

A simply connected (connected) Riemannian manifold is CW-homogeneous if and only if it
is isometric to a direct metric product of an Euclidean space, odd-dimensional spheres
of constant sectional curvature, and simply connected simple compact
Lie groups with bi-invariant Riemannian metrics (some of the factors may be missing)
\cite{BerNik4, BerNik5}. As a corollary, it is always symmetric and normal
homogeneous. Notice that as a main tool in the proof of this result were nontrivial Killing vector fields of
constant length, which have been studied also in \cite{BerNik6, BerNik7, BerNik8}. Any geodesic in CW-homogeneous
Riemannian manifold
is the integral path of some nontrivial Killing vector field of constant length \cite{BerNik4, BerNik5}. Thus,
in the compact case, such manifold has zero Euler characteristic.

Special attention has been and will be paid to the case of compact simply
connected $\delta$-homogeneous Riemannian manifolds with positive Euler characteristic.
By the B.~Kostant and Hopf-Samelson theorems \cite{On}, in more general, homogeneous case, every such
manifold is effective homogeneous space $M = G/H$ of a semisimple compact Lie group $G$, and each maximal torus
of the subgroup $H$ is a maximal torus of the group $G$, see \cite{On}. We proved that $M=G/H$ with an invariant
naturally reductive metric is normal homogeneous, and thus is $\delta$-homogeneous \cite{BerNik3}.
If $M$ is also indecomposable, then the Lie group $G$ is simple \cite{Kost}. One can find a complete
classification of such spaces in \cite{W}.

Using the existence of such classification and some results of A.L.~Onishchik \cite{On}
about full isometry groups of compact homogeneous Riemannian manifolds,
the authors  started a systematic search of all possible candidates for compact simply connected
indecomposable $\delta$-homogeneous Riemannian manifolds with positive Euler characteristic, which are not
normal homogeneous (really conjecturing at the same time that there are no such spaces).

The exclusion process in this search has had several stages. At first
all compact simple Lie groups whose roots have one and the same length
(as full connected Lie groups of motions for mentioned possible candidates) have been excluded, then all compact
simple exceptional Lie groups, and many homogeneous spaces of Lie groups $SO(2l+1)$ and $Sp(l)$.
These exclusions have been made after rather extensive and hard infinitesimal calculations on Lie algebras and work
with root systems and root decompositions. We finished the paper \cite{BerNik} by the list of possible candidates,
consisting only from the generalized flag manifolds $SO(2l+1)/U(l)$ and $Sp(l)/U(1)\cdot  Sp(l-1)=\mathbb{C}P^{2l-1}$
for $l \geq 2$.

Both families have many common properties. They start with the same space \linebreak
$SO(5)/U(2)=Sp(2)/U(1)\cdot  Sp(1)=\mathbb{C}P^3$, admit a two-parametric family of invariant Riemannian matrics,
all these metrics are g.o. and weakly symmetric, and mainly are not normal \cite{Zi96, Ta, AA}. Supplied with these
metrics, the spaces from both these families are total spaces of Riemannian submersions,
hence (nontrivial) fiber bundles, with irreducible symmetric Riemannian spaces with positive Euler characteristic as
bases and (totally geodesic) fibers, $SO(2l+1)/SO(2l)=S^{2l}$
and $SO(2l)/U(l)$ respectively for the first family and $Sp(l)/Sp(1)\cdot Sp(l-1)=\mathbb{H}P^{l-1}$ and
$Sp(1)/U(1)=S^2$ for the second family. The bases in all cases are two-point homogeneous. Note that the space
$SO(2l)/U(l)$ is usually treated as the set of complex structures on $\mathbb{R}^{2l}$ or the set of the
metric-compatible fibrations $S^1\rightarrow \mathbb{R}P^{2l-1}\rightarrow \mathbb{C}P^{l-1}$ \cite{Bes}, but one
can easily deduce from results in \cite{BerNik5} that it can be interpreted also as a connected component of
the set of all unit Killing vector fields on the round sphere $S^{2l-1}$. Also in \cite{BerNik} it have been stated
the same a priori constraints for parameters of spaces as possible invariant Riemannian $\delta$-homogeneous, but
not normal metrics, namely exactly strongly between the parameters of two distinct families of normal metrics.
Finally, at the end of the paper \cite{BerNik} the authors proved that a unique common member of these two families,
$SO(5)/U(2)=Sp(2)/U(1)\cdot  Sp(1)=\mathbb{C}P^3$, supplied with Riemannian invariant metrics with the mentioned a priori  parameters, is actually a $\delta$-homogeneous, but not normal homogeneous manifold. A quite different proof of this fact is given also in \cite{BerNik2}. All these metrics have positive sectional curvature, and using the paper \cite{Vol}, they can be characterized by the property, that their pinching constants
lie in the open interval $(1/2^4,1/2^2)$. The pinching constant $1/2^2$ corresponds to the famous Fubini-Studi
metric on $\mathbb{C}P^3$ and metrics, which are homothetic to it. All other normal homogeneous Riemannian metrics
on $\mathbb{C}P^3$ have the pinching constant $1/2^4$.

Notice that historically, $SO(5)/U(2)=Sp(2)/U(1)\cdot  Sp(1)=\mathbb{C}P^3$
was the first example of a compact non-naturally reductive homogeneous space admitting invariant g.o.
Riemannian metrics \cite{KV}. Maybe it is appropriate to notice
that the underlying manifold $\mathbb{C}P^3$ of the above homogeneous space is the
\textit{Penrose twistor space} \cite{Pen}, which can be interpreted, for example,
as the space of all compatible complex structures on the round $4$-dimensional sphere \cite{Bes}.

\begin{remark}
Nevertheless, there is one known essential distinction between the families $SO(2l+1)/U(l), l\geq 3,$ and
$Sp(l)/U(1)\cdot Sp(l-1)=\mathbb{C}P^{2l-1}, l \geq 2$: the spaces of the first family admit no invariant Riemannian
metrics of (strongly) positive sectional curvature, while all the spaces from the second family admit such metrics.
\end{remark}

Because of all these common properties, it was quite natural to conjecture that all other spaces from both families
admit invariant $\delta$-homogeneous but not normal homogeneous Riemannian metric. Surprisingly enough,  it turns out,
that with respect to this property, they behave itself quite differently. In this paper we shall
 exclude all the spaces $SO(2l+1)/U(l), l\geq 3,$ from the above mentioned list (Section \ref{SO}), and quite opposite
 to this, we will prove that all the spaces
$Sp(l)/U(1)\cdot Sp(l-1)=\mathbb{C}P^{2l-1}$ for $l \geq 2$, supplied with invariant Riemannian metrics with the
pinching constants in the open interval $(1/2^4,1/2^2)$, are $\delta$-homogeneous but not normal homogeneous
(Section \ref{SP}).

\begin{remark}
The last result implies the existence of the following unusual geometric situation: for every $l\geq 2$ there are an
irreducible orthogonal representation $r: Sp(l)\rightarrow SO(l(2l+1))$ (actually, the adjoint representation $\Ad$
of $Sp(l)$) in Euclidean space $\mathbb{E}^{l(2l+1)}$ and a convex body $D$ bounded by an ellipsoid (not a ball!)
in $\mathbb{E}^{2(2l-1)}\subset \mathbb{E}^{l(2l+1)}$ such that $D$ is (the image under) the orthogonal projection
(onto $\mathbb{E}^{2(2l-1)}$) of a $r(Sp(l))$-invariant centrally symmetric convex body $B$ in $\mathbb{E}^{l(2l+1)}$.
As a corollary of this, such $B$ cannot be bounded by an ellipsoid in $\mathbb{E}^{l(2l+1)}$
(cf. Remark 2 in \cite{BerNik} for the case $l=2$).
\end{remark}

One can find more precise statements of main results of this paper in the next section.
It is worth to note that the above mentioned method of g.o.-vectors (in particular, $\delta$-vectors) and
characteristic polynomials, which the authors actively apply in the last section of \cite{BerNik} and Sections \ref{SO}
and \ref{SP} in this paper, requires hard calculations, especially calculations of characteristic polynomials of seventh
degree in Section \ref{SO}.

The authors don't know the answers to the following questions.

\begin{quest}
Does there exist a compact simply connected indecomposable $\delta$-homogeneous
but not normal homogeneous Riemannian manifold with zero Euler characteristic?
\end{quest}

\begin{remark}
There are many decomposable Riemannian manifolds of this kind. One can take for example
the direct metric product of $SO(5)/U(2)$, supplied with an invariant
Riemannian metric with the pinching constant in $(1/2^4,1/2^2)$, and the round 3-dimensional sphere.
\end{remark}

\begin{quest}
Is it true that every compact simply connected indecomposable
$\delta$-homogeneous Riemannian manifold is either normal homogeneous, or weakly symmetric?
\end{quest}

\begin{remark}
In the decomposable case the answer to this question is negative.
\end{remark}

D.V.~Alekseevsky and J.A.~Wolf suggested to the authors the following question.

\begin{quest}
Describe all (simply connected) Riemannian manifolds $(M,\mu)$ such that for every point $x\in M$ and every
vector $v\in M_{x}$ there is a Killing vector field $X$ on $(M,\mu)$, attaining the minimal value of its length at $x$,
such that $X(x)=v$.
\end{quest}

\begin{remark}
Any such manifold $(M,\mu)$ is geodesic orbit; it has zero Euler characteristic in the compact case.
The class of manifolds of this kind is closed under the direct metric product operation; it contains
Clifford-Wolf homogeneous Riemannian manifolds and simply connected geodesic orbit (in particular, symmetric)
spaces of nonpositive sectional curvature.
\end{remark}
\medskip
{\bf Acknowledgements.}
The first author is very obliged to Department of Mathematics of University of
Notre Dame, Indiana, USA, for hospitality and a visiting professor position while a
part of this paper has been prepared. The project was supported in part by
the State Maintenance Program for the Leading Scientific Schools
of the Russian Federation (grant NSH-5682.2008.1). The first author was partially supported by RFBR (grant 08-01-00067-a).

\section{Preliminaries}

Here we collect the most important results from \cite{BerNik}, some general
information which we shall need in Sections \ref{SO}, \ref{SP}, and formulate precisely
the main results of this paper.

\begin{prop}[\cite{Al}]
\label{tor}
Let $(M=G/H,\mu)$ be any homogeneous Riemannian manifold and $T$ be any torus in
$H$, $C(T)$ is its centralizer in $G$. Then the orbit $M_T=C(T)(eH)$ is a totally geodesic submanifold of $(M,\mu)$.
\end{prop}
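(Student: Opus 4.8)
The plan is to realize the orbit $M_T$ as an open subset of the fixed point set of the torus $T$ acting on $M$ by isometries, and then to invoke the classical fact that fixed point sets of isometries are totally geodesic. Since $G$ acts on $M=G/H$ by isometries and $T\subseteq H$, every $t\in T$ is an isometry fixing the base point $o=eH$ (indeed $t\cdot o = tH = o$), so the whole torus $T$ fixes $o$, and the set $F=\{x\in M : t\cdot x = x \ \text{for all}\ t\in T\}$ is a closed subset containing $o$.

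First I would recall why $F$ is totally geodesic and identify its tangent spaces. If $x\in F$ and $v\in M_x$ is fixed by every differential $d(t)_x$, $t\in T$, then the geodesic $\gamma$ with $\gamma(0)=x$, $\dot\gamma(0)=v$ and each curve $t\circ\gamma$ are geodesics with the same initial data (as $t$ is an isometry fixing $x$ with $d(t)_x v = v$), hence coincide; so $\gamma$ stays in $F$, which is exactly the condition that each component of $F$ be totally geodesic, with tangent space at $x$ equal to the isotropy-fixed subspace $(M_x)^T$. Next, choosing an $\Ad(H)$-invariant reductive decomposition $\mathfrak{g}=\mathfrak{h}\oplus\mathfrak{m}$ (available since $H$ is compact) and identifying $M_o\cong\mathfrak{m}$, the isotropy action of $T$ is $\Ad|_{\mathfrak{m}}$, so, $T$ being connected, $(M_o)^T=\mathfrak{m}\cap\mathfrak{z}_{\mathfrak{g}}(\mathfrak{t})$ with $\mathfrak{t}=\mathrm{Lie}(T)$. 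On the other hand the Lie algebra of $C(T)$ is $\mathfrak{c}=\mathfrak{z}_{\mathfrak{g}}(\mathfrak{t})$, and since every $\Ad(t)$, $t\in T$, preserves the splitting $\mathfrak{h}\oplus\mathfrak{m}$, the centralizer splits accordingly as $\mathfrak{c}=(\mathfrak{c}\cap\mathfrak{h})\oplus(\mathfrak{c}\cap\mathfrak{m})$. Hence the tangent space of the orbit $M_T=C(T)(o)$ at $o$, being the image of $\mathfrak{c}$ under the projection $\mathfrak{g}\to\mathfrak{m}$, equals $\mathfrak{c}\cap\mathfrak{m}=\mathfrak{m}\cap\mathfrak{z}_{\mathfrak{g}}(\mathfrak{t})=(M_o)^T$, the same subspace as the tangent space of $F$ at $o$.

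Finally I would verify the inclusion and transfer total geodesy. For $c\in C(T)$ and $t\in T$ one has $t\cdot(c\cdot o)=c\cdot(t\cdot o)=c\cdot o$, because $tc=ct$ and $t$ fixes $o$, so $M_T\subseteq F$. Moreover the tangent spaces agree not only at $o$ but at every point $x=c\cdot o$ of $M_T$: the isometry $c\in C(T)$ preserves $F$ and sends $o$ to $x$, so the tangent space of $F$ at $x$ is $d(c)_o\big((M_o)^T\big)$, which is also the tangent space of $M_T$ at $x$. Thus $M_T$ lies in $F$ and has, at each of its points, the full tangent space of $F$; since near such a point $F$ is a single embedded totally geodesic submanifold of the same dimension as $M_T$ (locally $\exp_x\big((M_x)^T\big)$), the orbit $M_T$ is an open submanifold of $F$ and hence is itself totally geodesic. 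I expect the tangent-space identification, resting on the fact that a reductive complement $\mathfrak{m}$ can be chosen $\Ad(T)$-invariant so that $\mathfrak{c}$ respects the splitting $\mathfrak{h}\oplus\mathfrak{m}$, to be the crux; the accompanying delicate point is the dimension count at \emph{every} point of the orbit, which is what guarantees that $M_T$ is an open piece of $F$ rather than a lower-dimensional subset, so that total geodesy transfers.
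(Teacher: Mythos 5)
Your proof is correct, but note that the paper itself contains no proof of Proposition \ref{tor}: the statement is imported, with only the citation \cite{Al}, from Alekseevskii's paper, so there is no internal argument to compare against. Your route --- embedding the orbit $M_T$ in the fixed point set $F$ of the isometric $T$-action and matching tangent spaces --- is the standard self-contained proof of this fact, and the essential steps are all present: $M_T\subseteq F$ because $C(T)$ commutes with $T$ and $T$ fixes $o$; the splitting $\mathfrak{c}=(\mathfrak{c}\cap\mathfrak{h})\oplus(\mathfrak{c}\cap\mathfrak{m})$, valid because $\Ad(T)$ preserves both summands, identifies $T_oM_T=\mathrm{pr}_{\mathfrak{m}}(\mathfrak{c})$ with $\mathfrak{c}\cap\mathfrak{m}=(M_o)^T=T_oF$; and $C(T)$-equivariance propagates this equality along the orbit, so $M_T$ is open in $F$ and inherits total geodesy. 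Two refinements are worth recording. First, you do not need an $\Ad(H)$-invariant complement (hence no compactness assumption on $H$): averaging any complement over the compact torus $T$ produces an $\Ad(T)$-invariant $\mathfrak{m}$, which is all the splitting argument uses. Second, your sketch of the classical fixed-point theorem proves only the inclusion $\exp_x\bigl((M_x)^T\bigr)\subseteq F$; that $F$ is locally \emph{no larger} --- which is what gives $F$ its manifold structure and the identification $T_xF=(M_x)^T$ on which your dimension count rests --- requires the converse: a point $y\in F$ in a normal neighborhood of $x$ is joined to $x$ by a unique minimizing geodesic, each $t\in T$ carries that geodesic to itself and hence fixes its initial vector, placing $y$ in $\exp_x\bigl((M_x)^T\bigr)$. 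Since you explicitly invoke the classical fact rather than claim to reprove it, this is a legitimate appeal, but the converse inclusion is where the real content of that fact lies.
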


\begin{theorem}[\cite{BerNik}]
\label{tot}
Every closed totally geodesic submanifold of a $\delta$-homogeneous (geodesic orbit)
Riemannian manifold is $\delta$-homogeneous (respectively, geodesic orbit) itself.
\end{theorem}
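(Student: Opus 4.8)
The plan is to reduce both assertions to a single fact about Killing vector fields on totally geodesic submanifolds. Let $N\subset M$ be the given closed totally geodesic submanifold, and for a Killing field $X$ on $(M,\mu)$ write $X=X^{\top}+X^{\perp}$ along $N$, where $X^{\top}(z)\in N_{z}$ and $X^{\perp}(z)\in(N_{z})^{\perp}$ for $z\in N$. The key lemma I would establish first is that $X^{\top}$ is itself a Killing field on $N$. To prove it, recall that for a totally geodesic $N$ the Levi-Civita connection of $N$ is the tangential projection of that of $M$ and the second fundamental form vanishes, so $\nabla^{M}_{Y}Z$ is tangent to $N$ whenever $Y,Z$ are tangent to $N$. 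For such $Y,Z$ one then computes $\mu(\nabla^{N}_{Y}X^{\top},Z)=\mu(\nabla^{M}_{Y}X,Z)-\mu(\nabla^{M}_{Y}X^{\perp},Z)$, and the last term vanishes because differentiating $\mu(X^{\perp},Z)=0$ along $Y$ and using $\nabla^{M}_{Y}Z\in N_{z}$ gives $\mu(\nabla^{M}_{Y}X^{\perp},Z)=0$. Symmetrizing in $Y,Z$ and invoking the Killing equation for $X$ on $M$ then yields $\mu(\nabla^{N}_{Y}X^{\top},Z)+\mu(\nabla^{N}_{Z}X^{\top},Y)=0$, i.e. $X^{\top}$ is Killing on $N$. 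I would also record the elementary inequality $\sqrt{\mu(X^{\top}(z),X^{\top}(z))}\le\sqrt{\mu(X(z),X(z))}$ for $z\in N$, with equality exactly when $X(z)$ is tangent to $N$.

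For the geodesic orbit assertion, fix $x\in N$ and $v\in N_{x}$. Since $(M,\mu)$ is g.o., there is a Killing field $X$ on $M$ whose orbit through $x$ is the geodesic $\gamma$ of $M$ with $\gamma(0)=x$, $\gamma'(0)=v$; thus $X(\gamma(t))=\gamma'(t)$ for all $t$. Because $N$ is closed and totally geodesic and $v\in N_{x}$, the geodesic $\gamma$ stays in $N$ and is a geodesic of $N$, so $\gamma'(t)\in N_{\gamma(t)}$. Hence $X^{\top}(\gamma(t))=(\gamma'(t))^{\top}=\gamma'(t)$, which shows that $\gamma$ is an integral curve of the Killing field $X^{\top}$ of $N$, i.e. an orbit of the one-parameter isometry group $\exp(tX^{\top})$ of $N$. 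As $x$ and $v$ were arbitrary, every geodesic of $N$ is such an orbit, so $N$ is g.o.

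For the $\delta$-homogeneous assertion I would use the $\delta$-vector characterization recalled in the introduction. Fix $x\in N$ and $v\in N_{x}$, and choose a $\delta$-vector $X$ of $M$ at $x$ with $X(x)=v$, so that $\sqrt{\mu(X(z),X(z))}\le\sqrt{\mu(X(x),X(x))}=\sqrt{\mu(v,v)}$ for every $z\in M$. Projecting, $X^{\top}$ is a Killing field on $N$ with $X^{\top}(x)=v$ (here $X^{\perp}(x)=0$ since $v$ is tangent to $N$). The length inequality from the lemma gives, for every $z\in N$,
\[
\sqrt{\mu(X^{\top}(z),X^{\top}(z))}\le\sqrt{\mu(X(z),X(z))}\le\sqrt{\mu(X(x),X(x))}=\sqrt{\mu(X^{\top}(x),X^{\top}(x))},
\]
so $X^{\top}$ attains its maximal length on $N$ at $x$; that is, $X^{\top}$ is a $\delta$-vector of $N$ realizing $v$. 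Since this holds for all $x\in N$ and all $v\in N_{x}$, the manifold $N$ is $\delta$-homogeneous.

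I expect the genuine content to be concentrated in the lemma that $X^{\top}$ is a Killing field on $N$; once this tangential projection statement and its accompanying length inequality are in hand, both the g.o. and the $\delta$-homogeneous conclusions follow by the short orbit and length verifications above. The only global points to check are that closedness and completeness of $N$ guarantee that the relevant geodesics remain in $N$ and that the maxima defining the Chebyshev norm are attained.
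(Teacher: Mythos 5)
Your proposal is correct and takes essentially the same route as the paper's own argument (the proof of Theorem 11 in \cite{BerNik}, whose key step this paper explicitly invokes in the proof of Proposition \ref{osn3}): the tangential component $X^{\top}$ of a Killing field of $(M,\mu)$ along a closed totally geodesic submanifold $N$ is a Killing field of $N$ that inherits the length-maximality property, from which both the geodesic orbit and the $\delta$-homogeneous conclusions follow just as you describe.
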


Now we shall describe a situation, common for both Sections \ref{SO} and \ref{SP}.

Let $G$ be a compact connected Lie group, $H\subset K\subset G$ its closed subgroups.
Fix some $\Ad(G)$-invariant inner product $\langle \cdot, \cdot \rangle$ on the Lie algebra $\mathfrak{g}$
of the group $G$ (recall that there is an unique, up to multiplying by a constant,
such inner product for the case of simple Lie group $G$).
Consider the following $\langle \cdot, \cdot \rangle$-orthogonal decomposition
$$
\mathfrak{g}=\mathfrak{h}\oplus\mathfrak{p}=\mathfrak{h}\oplus\mathfrak{p_1}\oplus\mathfrak{p_2},
$$
where
$$
\mathfrak{k}=\mathfrak{h}\oplus\mathfrak{p_2}
$$
is the Lie algebra of the group $K$.
Obviously, $[\mathfrak{p_2},\mathfrak{p_1}]\subset \mathfrak{p_1}$.
Let $\mu=\mu_{x_1,x_2}$ be a $G$-invariant Riemannian metric on $G/H$, generated by the inner product of the form
\begin{equation}
\label{par} (\cdot,\cdot)=x_1 \langle \cdot ,\cdot
\rangle|_{\mathfrak{p}_1}+ x_2 \langle \cdot ,\cdot
\rangle|_{\mathfrak{p}_2}
\end{equation}
on $\mathfrak{p}$ for positive real numbers $x_1$ and $x_2$.

For any vector $V\in \mathfrak{g}$ we denote by $V_{\mathfrak{h}}$ and $V_{\mathfrak{p}}$  its
$\langle \cdot, \cdot \rangle$-orthogonal projection to $\mathfrak{h}$ and
$\mathfrak{p}$ respectively.

Recall, that the vector $W\in \frak{g}$ is a $\delta$-vector on $(G/H,\mu)$ if
and only if
\begin{equation}
\label{dvect}
(W|_{\mathfrak{p}},W|_{\mathfrak{p}}) \geq (\Ad(a)(W)|_{\mathfrak{p}},\Ad(a)(W)|_{\mathfrak{p}}),
\end{equation}
for every $a\in G$ (see Section 6 in \cite{BerNik}).

\begin{prop}[\cite{BerNik}]
\label{delt}
A homogeneous Riemannian manifold $(G/H,\mu)$ with connected Lie group $G$ is $G$-$\delta$-homogeneous if and only if
for every vector $v\in \mathfrak{p}$ there exists a vector $u\in \mathfrak{h}$ such that the vector $v+u$ is a
$\delta$-vector.
\end{prop}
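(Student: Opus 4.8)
The plan is to deduce the statement directly from the second (infinitesimal) definition of $\delta$-homogeneity recalled in the Introduction: $(G/H,\mu)$ is $G$-$\delta$-homogeneous if and only if the tangent map $dp(e)\colon(\mathfrak{g},\|\cdot\|)\to(M_{o},\mu(o))$ at the base point $o=eH$ is a submetry of normed vector spaces (here $\|\cdot\|$ is the Chebyshev norm), and the latter holds if and only if every $v\in M_{o}$ is the value $X(o)$ of some $\delta$-vector $X\in\mathfrak{g}$. Both reductions are quoted from \cite{BerNik}, so I would take them as given and spend the work on matching the condition ``$X(o)=v$, $X$ a $\delta$-vector'' with the form ``$v+u$, $u\in\mathfrak{h}$, is a $\delta$-vector''.

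First I would fix the identification $M_{o}\cong\mathfrak{p}$ furnished by $dp(e)$ and record that under it $dp(e)(W)=W|_{\mathfrak{p}}$ for every $W\in\mathfrak{g}$, since $dp(e)$ annihilates $\mathfrak{h}$ and restricts isometrically to $\mathfrak{p}$. Consequently, for $X=W$ the equation $X(o)=v$ reads $W|_{\mathfrak{p}}=v$; writing $W=u+v'$ with $u\in\mathfrak{h}$ and $v'\in\mathfrak{p}$, this says exactly $v'=v$, i.e. $W=v+u$ for some $u\in\mathfrak{h}$. Thus the $\delta$-vectors whose value at $o$ equals $v$ are precisely the vectors of the form $v+u$, $u\in\mathfrak{h}$, that happen to be $\delta$-vectors.

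Next I would confirm that ``$W$ is a $\delta$-vector'' is equivalent to inequality~(\ref{dvect}). By $G$-invariance of $\mu$, the Killing field generated by $W$ has at the point $aH$ the length $\sqrt{(\Ad(a^{-1})(W)|_{\mathfrak{p}},\Ad(a^{-1})(W)|_{\mathfrak{p}})}$, so by transitivity its maximal length over $M$, namely $\|W\|$, equals $\max_{a\in G}\sqrt{(\Ad(a)(W)|_{\mathfrak{p}},\Ad(a)(W)|_{\mathfrak{p}})}$, while its length at $o$ is $\sqrt{(W|_{\mathfrak{p}},W|_{\mathfrak{p}})}$. Hence the defining property $\|W\|=\sqrt{\mu(W(o),W(o))}$ of a $\delta$-vector holds exactly when the length at $o$ is maximal over the $\Ad(G)$-orbit, which is precisely~(\ref{dvect}). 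This is the characterization already cited from Section~6 of \cite{BerNik}, so it may be invoked directly rather than reproved.

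Chaining the equivalences then gives the result: $(G/H,\mu)$ is $G$-$\delta$-homogeneous $\iff$ for every $v\in M_{o}\cong\mathfrak{p}$ there is a $\delta$-vector $X$ with $X(o)=v$ $\iff$ for every $v\in\mathfrak{p}$ there is $u\in\mathfrak{h}$ such that $v+u$ is a $\delta$-vector. I expect the genuinely substantive content to reside not in the final bookkeeping (which is elementary once the decomposition $W=v+u$ is in place) but in the two cited reductions, especially the passage from the global submetry property of $p\colon(G,\rho_G)\to(M,\rho_M)$ to the infinitesimal submetry property of $dp(e)$ at the single point $o$. Were one to prove that step from scratch, the difficulty would lie in using the transitive $G$-action and compactness to propagate the ball-covering property of $p$ from $o$ to all of $G$; since the Introduction supplies it, the present proof is essentially an unwinding of definitions.
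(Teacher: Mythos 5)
Your proposal is correct and follows essentially the route the paper itself takes: the paper gives no independent proof of Proposition~\ref{delt} (it is quoted from \cite{BerNik}), and its validity rests exactly on the chain of equivalences stated in the Introduction --- the global submetry property of $p$ reduces to the submetry property of $dp(e)$, which reduces to the existence, for each $v\in M_{p(e)}\cong\mathfrak{p}$, of a $\delta$-vector $W$ with $W|_{\mathfrak{p}}=v$, i.e.\ $W=v+u$, $u\in\mathfrak{h}$. Your additional verification that the $\delta$-vector condition coincides with inequality~(\ref{dvect}) via the $\Ad$-equivariance of the Killing field's length is the same computation the paper invokes from Section~6 of \cite{BerNik}, so the two treatments agree.
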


\begin{prop}[\cite{Tam, BerNik}]
\label{tam}
Let $W=X+Y+Z$ be a geodesic vector on $(G/H,\mu_{x_1,x_2}),$ where $x_1\neq x_2$, $X\in \mathfrak{p_1},
Y\in \mathfrak{p_2}, Z\in \mathfrak{h}$. Then
\begin{equation}
[Z,Y]=0, \quad [X,Y]=x_1/(x_2-x_1)[X,Z].
\end{equation}
\end{prop}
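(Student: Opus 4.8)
The plan is to apply the standard geodesic-vector criterion of Kowalski and Vanhecke: a vector $W\in\mathfrak{g}$ is a geodesic vector of $(G/H,\mu_{x_1,x_2})$ if and only if $([W,U]_{\mathfrak{p}},W_{\mathfrak{p}})=0$ for every $U\in\mathfrak{p}$, where $(\cdot,\cdot)$ is the metric inner product \eqref{par} and the subscript $\mathfrak{p}$ denotes $\langle\cdot,\cdot\rangle$-orthogonal projection onto $\mathfrak{p}$. Here $W_{\mathfrak{p}}=X+Y$, so I would test this identity against an arbitrary $U=U_1+U_2$ with $U_1\in\mathfrak{p_1}$, $U_2\in\mathfrak{p_2}$, and split it according to the $\mathfrak{p_1}$- and $\mathfrak{p_2}$-components using $([W,U]_{\mathfrak{p}},X+Y)=x_1\langle[W,U]_{\mathfrak{p_1}},X\rangle+x_2\langle[W,U]_{\mathfrak{p_2}},Y\rangle$.

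Before expanding, I would record the bracket relations forced by the chain $\mathfrak{h}\subset\mathfrak{k}=\mathfrak{h}\oplus\mathfrak{p_2}\subset\mathfrak{g}$ together with the $\Ad(G)$-invariance of $\langle\cdot,\cdot\rangle$. Since $\ad(\mathfrak{h})$ is skew-symmetric and preserves both subalgebras $\mathfrak{h}$ and $\mathfrak{k}$, it preserves $\mathfrak{p}=\mathfrak{h}^{\perp}$, then $\mathfrak{p_2}=\mathfrak{k}\ominus\mathfrak{h}$, and hence $\mathfrak{p_1}=\mathfrak{p}\ominus\mathfrak{p_2}$, giving $[\mathfrak{h},\mathfrak{p_1}]\subset\mathfrak{p_1}$ and $[\mathfrak{h},\mathfrak{p_2}]\subset\mathfrak{p_2}$; together with the given $[\mathfrak{p_2},\mathfrak{p_1}]\subset\mathfrak{p_1}$ and $[\mathfrak{p_2},\mathfrak{p_2}]\subset\mathfrak{k}$. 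The key consequence is that the three brackets occurring in the claim already lie in single summands: $[Z,Y]\in\mathfrak{p_2}$, while $[X,Y],[X,Z]\in\mathfrak{p_1}$.

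Expanding both pairings, I would repeatedly apply $\ad$-invariance in the form $\langle[A,B],C\rangle=\langle A,[B,C]\rangle$ to rewrite every surviving term as an inner product of $U_1$ or $U_2$ with one of $[X,Y]$, $[X,Z]$, $[Z,Y]$. Several terms vanish automatically because $\ad(X)$ and $\ad(Y)$ are skew, e.g. $\langle[X,U_1],X\rangle=\langle[X,U_2],X\rangle=\langle[Y,U_2],Y\rangle=0$. What remains collapses to
\[
\big\langle U_1,\,(x_1-x_2)[X,Y]+x_1[X,Z]\big\rangle-x_2\langle U_2,[Z,Y]\rangle=0
\]
for all $U_1\in\mathfrak{p_1}$ and $U_2\in\mathfrak{p_2}$.

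Finally, since $U_1$ ranges over all of $\mathfrak{p_1}$, $U_2$ over all of $\mathfrak{p_2}$, and the two pairings involve vectors lying in $\mathfrak{p_1}$ and $\mathfrak{p_2}$ respectively, the independence of the two summands and the non-degeneracy of $\langle\cdot,\cdot\rangle$ force $[Z,Y]=0$ and $(x_1-x_2)[X,Y]+x_1[X,Z]=0$; dividing the latter by $x_2-x_1\neq 0$ yields $[X,Y]=\frac{x_1}{x_2-x_1}[X,Z]$. The one point requiring genuine care — and the main obstacle — is precisely the bookkeeping in the preceding step: one must track exactly which summand each bracket lands in, because it is the containments $[Z,Y]\in\mathfrak{p_2}$ and $[X,Y],[X,Z]\in\mathfrak{p_1}$ that upgrade the displayed identity from a statement about $\mathfrak{p}$-projections into the honest bracket equations asserted. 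Without the hypothesis $x_1\neq x_2$ the second conclusion would degenerate, which is why it is essential.
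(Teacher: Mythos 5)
Your proof is correct. The paper itself gives no proof of Proposition \ref{tam} (it is quoted from \cite{Tam} and \cite{BerNik}), and your derivation — the Kowalski--Vanhecke geodesic-vector criterion $([W,U]_{\mathfrak{p}},W_{\mathfrak{p}})=0$ for all $U\in\mathfrak{p}$, combined with $\ad$-invariance of $\langle\cdot,\cdot\rangle$ and the module containments $[\mathfrak{h},\mathfrak{p}_i]\subset\mathfrak{p}_i$, $[\mathfrak{p}_2,\mathfrak{p}_1]\subset\mathfrak{p}_1$, which place $[Z,Y]$ in $\mathfrak{p}_2$ and $[X,Y],[X,Z]$ in $\mathfrak{p}_1$ — is exactly the standard argument used in those references.
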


In Section \ref{SO}, we consider the case $(G/H,\mu=\mu_{x_1,x_2}),$ where
$G=SO(2l+1),\quad H=U(l), \quad K=SO(2l), \quad l\geq 3,$ with the embeddings
$U(l)\subset SO(2l)\subset SO(2l+1)$, described below, and $\mu=\mu_{x_1,x_2}$
defined by the inner product (\ref{par}).

For $A,B \in so(2l+1)$ we define $\langle A,B\rangle
=-1/2\trace(A\cdot B)$. This is an $\Ad(SO(2l+1))$-invariant inner
product on $so(2l+1)$.
A matrix $A+\sqrt{-1}B\in u(l)$ we embed into $so(2l)$ via
$A+\sqrt{-1}B \mapsto
\left(
\begin{array}{rr}
A&B\\
-B&A\\
\end{array}
\right)$ in order to get the irreducible symmetric pair $(so(2l),u(l))$ (see
e.g. \cite{Hel}). Also we use the standard embedding $so(2l)$ into
$so(2l+1)$: $A \mapsto \diag(A,0)$.
The inclusions $u(l)\subset so(2l)\subset so(2l+1)$,
constructed above, induce the corresponding inclusions of connected matrix Lie groups
$\tau_l:U(l)\mapsto SO(2l)$ and $\tau'_l:U(l) \mapsto SO(2l+1)$.

The modules $\mathfrak{p}_1$ and $\mathfrak{p}_2$, described above for a general situation,
are $\Ad(\tau'_l(U(l)))$-invariant and $\Ad(\tau'_l(U(l)))$-irreducible in this particular case.

In Section \ref{SP} we find all $\delta$-homogeneous metrics on the spaces
$G/H=Sp(l)/U(1)\cdot Sp(l-1)$, where $H=U(1)\cdot Sp(l-1)\subset
K=Sp(1)\cdot Sp(l-1) \subset Sp(l)$ with embedding described below, and the pairs
$(Sp(l),Sp(1)\cdot Sp(l-1))$, $(Sp(1),U(1))$ are irreducible symmetric.

Let $\mathbb{H}$ be the field of quaternions.
Denote by ${\bf i}$, ${\bf j}$, ${\bf k}$ the quaternionic units in $\mathbb{H}$
(${\bf i}{\bf j}=-{\bf j}{\bf i}={\bf k}$,
${\bf j}{\bf k}=-{\bf k}{\bf j}={\bf i}$,
${\bf k}{\bf i}=-{\bf i}{\bf k}={\bf j}$,
${\bf i}{\bf i}={\bf j}{\bf j}={\bf k}{\bf k}=-1$).
For $X=x_1+{\bf i}x_2+{\bf j}x_3+{\bf k}x_4$, $x_i \in \mathbb{R}$,
define $\overline{X}=x_1-{\bf i}x_2-{\bf j}x_3-{\bf k}x_4$ and
$\|X\|=\sqrt{X\overline{X}}$.
Consider a (left-side) vector space
$\mathbb{H}^l$ over $\mathbb{H}$. For $X=(X_1,X_2,\dots,X_l)\in \mathbb{H}^l$ and
$Y=(Y_1,Y_2,\dots,Y_l)\in \mathbb{H}^l$ we define
$(X,Y)_1=\sum\limits_{s=1}^l X_s \overline{Y}_s$. Then the group $Sp(l)$
is is a group of $\mathbb{R}$-linear operators $A:\mathbb{H}^l \rightarrow \mathbb{H}^l$
with the property $(A(X),A(Y))_1=(X,Y)_1$ for any $X,Y \in \mathbb{H}^l$.
If we choose some $(\cdot,\cdot)_1$-orthonormal quaternionic basis in $\mathbb{H}^l$,
then we can identify $Sp(l)$ with a group of matrices $A=(a_{ij})$,
$a_{ij}\in \mathbb{H}$ with the property $A^{-1}=A^{\ast}$, where
$a_{ij}^{\ast}=\overline{a}_{ji}$ for $1\leq i,j \leq l$.
In this case $sp(l)$ consists of $(l\times l)$-quaternionic matrices $A$ with the property
$A^{\ast}=-A$. Later on we shall use this identifications.

For $A,B \in sp(l)$ we define

\begin{equation}\label{product}
\langle A,B\rangle=\frac{1}{2} \trace( \operatorname{Re} (AB^{\ast})).
\end{equation}

It is easy to see that $\langle \cdot,\cdot\rangle$ is a $\Ad(Sp(l))$-invariant
inner product on the Lie algebra $\mathfrak{g}=sp(l)$.
In the sequel we shall suppose (without loss of generality) that the embedding of
$sp(1)\oplus sp(l-1)$ in $sp(l)$ is defined by
$(A,B) \mapsto \diag(A,B)$, where $A\in sp(1)$ and $B\in sp(l-1)$.
It is clear that the modulus $\mathfrak{p}_1$ and $\mathfrak{p}_2$ are
$\Ad(Sp(l))$-invariant and $\Ad(Sp(l))$-irreducible. We know that every invariant Riemannian metric
$\mu={\mu}_{x_1,x_2}$ on $Sp(l)/U(1)\cdot Sp(l-1)$, corresponding to the inner product
(\ref{par}), is a g.o.-metric \cite{Zi96}.

One of the main results of the paper \cite{BerNik} is the following

\begin{theorem}[\cite{BerNik}]
\label{oot}
No one of compact simply connected (connected) indecomposable homogeneous Riemannian
manifolds with positive Euler characteristic, excepting $(SO(5)/U(2)=Sp(2)/U(1)\cdot Sp(1), \mu={\mu}_{x_1,x_2})$,
and possibly  $(SO(2l+1)/U(l), \mu={\mu}_{x_1,x_2})$ or
$(Sp(l)/U(1)\cdot Sp(l), \mu={\mu}_{x_1,x_2})$, where $l\geq 3$ and $x_1< x_2< 2x_1$
in all the cases above, cannot be $\delta$-homogeneous but not normal homogeneous
Riemannian manifold.
\end{theorem}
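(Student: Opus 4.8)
The plan is to combine the structure theory of homogeneous spaces of positive Euler characteristic with the infinitesimal $\delta$-vector criterion of Proposition \ref{delt}, thereby reducing everything to an isospectral optimization problem on $\Ad(G)$-orbits. First I would invoke the Kostant and Hopf--Samelson theorems to write any compact simply connected indecomposable manifold of positive Euler characteristic as $M=G/H$ with $G$ simple compact and $\rk H=\rk G$, and appeal to Wolf's classification \cite{W} to enumerate all such generalized flag manifolds. Spaces whose isotropy representation is irreducible carry, up to scaling, a unique invariant metric, which is normal homogeneous and hence produces nothing new. Thus the genuine candidates are the flag manifolds with reducible isotropy, and the crux of the argument concerns those with exactly two irreducible summands, for which the invariant metrics form the two-parameter family $\mu_{x_1,x_2}$ of (\ref{par}).

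For such a space the central mechanism is the following. By Proposition \ref{delt}, $\delta$-homogeneity means that every $v\in\mathfrak{p}$ extends to a $\delta$-vector $W=v+u$ with $u\in\mathfrak{h}$, characterized by (\ref{dvect}) as a point where the metric-weighted norm $(W_{\mathfrak{p}},W_{\mathfrak{p}})=x_1\langle W_{\mathfrak{p}_1},W_{\mathfrak{p}_1}\rangle+x_2\langle W_{\mathfrak{p}_2},W_{\mathfrak{p}_2}\rangle$ attains its maximum over the $\Ad(G)$-orbit of $W$. Realizing $G$ as a matrix group, every element of this orbit shares the characteristic polynomial of $W$, so the orbit lies in a fixed isospectral set; the first-order (critical point) condition for the maximum recovers exactly the g.o. relations of Proposition \ref{tam}, while global maximality comparisons, obtained by conjugating $W$ with suitable one-parameter subgroups and Weyl-group representatives, yield identities and inequalities among $x_1$, $x_2$, and the eigenvalue data. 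The idea is to feed generic directions $v$ into this scheme and read off which ratios $x_1/x_2$ can possibly survive.

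Executing this case by case, I would proceed in stages mirroring the known exclusion process. For simply-laced $G$ (types $A$, $D$, $E$) all roots have equal length, which makes the eigenvalue bookkeeping across the two summands uniform and forces the maximality condition to hold only at the normal ratio, excluding these; the exceptional groups are then removed by direct characteristic-polynomial computations. For the remaining types $B_l=SO(2l+1)$ and $C_l=Sp(l)$ one checks each flag manifold in the list, using the matrix realizations fixed in the Preliminaries (the embeddings $U(l)\subset SO(2l)\subset SO(2l+1)$ and $\diag(A,B)$ for $sp(1)\oplus sp(l-1)\subset sp(l)$) to compute the relevant characteristic polynomials explicitly; all but $SO(2l+1)/U(l)$ and $Sp(l)/U(1)\cdot Sp(l-1)$, together with their common member $\mathbb{C}P^3$, fail to admit a $\delta$-vector in every direction for a non-normal metric. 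Finally, comparing the surviving candidates against the two distinct normal metrics on each family confines the admissible parameters to the open range $x_1<x_2<2x_1$ strictly between the two normal ratios.

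The main obstacle is computational rather than conceptual: the exhaustive traversal of the classification and, above all, the explicit maximization over isospectral orbits for the high-rank $B_l$ and $C_l$ spaces and for the exceptional groups. There the characteristic polynomials reach high degree (degree seven already appears for $SO(2l+1)/U(l)$), and controlling their coefficients as functions of $x_1$ and $x_2$ while verifying that no non-normal ratio admits a $\delta$-vector in every direction is where the argument becomes delicate and where most of the labor lies.
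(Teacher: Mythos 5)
Your overall plan coincides with the route the paper attributes to \cite{BerNik}: Kostant/Hopf--Samelson plus Wolf's classification \cite{W} to enumerate the flag manifolds, then the $\delta$-vector criterion of Proposition \ref{delt} with the g.o.\ relations of Proposition \ref{tam}, applied in stages (simply-laced groups first, then exceptional groups, then the $B_l$ and $C_l$ spaces), ending with the a priori pinching of the parameters. However, there is a concrete gap: you phrase the entire exclusion in terms of \emph{$G$-$\delta$-homogeneity} and \emph{$G$-normality} for the fixed presentation $G/H$, whereas both notions in the theorem quantify over \emph{all} transitive isometry groups. For a compact manifold, $\delta$-homogeneity is equivalent to $\delta$-homogeneity with respect to the connected unit component of the full isometry group, so to conclude that a space ``cannot be $\delta$-homogeneous'' you must identify that group, and to conclude ``not normal homogeneous'' you must rule out normality with respect to every transitive subgroup of it. This is precisely why the paper lists Onishchik's results \cite{On} on full isometry groups as an essential ingredient of the search. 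The point is not academic: by Proposition \ref{vvspom0}, at $x_2=2x_1$ the connected isometry group of $Sp(l)/U(1)\cdot Sp(l-1)$ jumps from $Sp(l)/\{\pm I\}$ to a quotient of $SU(2l)$, and the metric, while not $Sp(l)$-normal, \emph{is} $SU(2l)$-normal. Your final step of ``comparing against the two distinct normal metrics on each family'' presupposes exactly this kind of information, and your intermediate exclusions, as written, only establish ``not $G$-$\delta$-homogeneous,'' which is strictly weaker than what the theorem asserts.

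Two secondary inaccuracies are worth flagging, though they are repairable. First, the crux cannot be confined to isotropy representations with exactly two irreducible summands: the theorem covers all invariant metrics on all flag manifolds of positive Euler characteristic, including those (e.g.\ full flag manifolds $G/T$) whose isotropy splits into many summands, where the invariant metrics are not of the two-parameter form (\ref{par}); the staged exclusion must dispose of these as well. Second, according to the paper's own account, the exclusions in \cite{BerNik} were carried out by ``infinitesimal calculations on Lie algebras and work with root systems and root decompositions,'' while the characteristic-polynomial/isospectral-orbit device you make central was actively used only in the last section of \cite{BerNik} and in Sections \ref{SO} and \ref{SP} of the present paper; attributing it to every stage misrepresents where the leverage comes from, even if the device could in principle be pushed further.
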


The main result of Section \ref{SO} is the following

\begin{theorem}\label{osn1}
The Riemannian manifold $(SO(2l+1)/U(l),\mu={\mu}_{x_1,x_2}),$ where $l\geq 3$, is not
$\delta$-homogeneous if $x_1<x_2<2x_1$.
\end{theorem}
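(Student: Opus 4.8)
The plan is to combine a reduction to a small totally geodesic submanifold with the characteristic-polynomial method for $\delta$-vectors. First I would use Proposition~\ref{tor} and Theorem~\ref{tot} to reduce the general case $l\geq 3$ to the base case $l=3$. Concretely, inside $H=U(l)$ take the circle $T=\{\diag(I_3,e^{\sqrt{-1}\theta}I_{l-3})\}$ acting on $\mathbb{C}^{l}=\mathbb{C}^{3}\oplus\mathbb{C}^{l-3}$; viewed in $G=SO(2l+1)$ via $\tau'_l$ it fixes $\mathbb{R}^{6}\oplus\mathbb{R}=\mathbb{R}^{7}$ pointwise and rotates $\mathbb{R}^{2(l-3)}$, so its centralizer is $C(T)=SO(7)\times U(l-3)$ and $C(T)\cap U(l)=U(3)\times U(l-3)$. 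Hence by Proposition~\ref{tor} the orbit $M_T=C(T)(eH)=SO(7)/U(3)$ is a closed totally geodesic submanifold, and since $\mathfrak p_1,\mathfrak p_2$ restrict to $\mathfrak{so}(7)\ominus\mathfrak{so}(6)$ and $\mathfrak{so}(6)\ominus\mathfrak u(3)$, the induced metric is again $\mu_{x_1,x_2}$ with the \emph{same} parameters. By Theorem~\ref{tot}, if $(SO(2l+1)/U(l),\mu_{x_1,x_2})$ were $\delta$-homogeneous then so would be $(SO(7)/U(3),\mu_{x_1,x_2})$; thus it suffices to refute $\delta$-homogeneity for $l=3$, which is also why only $7\times7$ matrices (degree-$7$ characteristic polynomials) occur and why $l\geq 3$ is needed.

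Next I would recast the $\delta$-vector condition as an orbit maximization adapted to the characteristic polynomial. Writing $M=\Ad(a)(W)$ and setting $\|M\|^2=\langle M,M\rangle=-\tfrac12\trace(M^2)$, which is constant on the $\Ad(SO(7))$-orbit (being a coefficient of the characteristic polynomial), the inequality \eqref{dvect} becomes the statement that $W$ is a $\delta$-vector if and only if it maximizes
\[
g(M)=(x_2-x_1)\,\|M_{\mathfrak p_2}\|^2-x_1\,\|M_{\mathfrak h}\|^2=(x_2-x_1)\,\|M_{\mathfrak k}\|^2-x_2\,\|M_{\mathfrak h}\|^2
\]
over its orbit. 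For $x_1<x_2<2x_1$ both coefficients $x_2-x_1$ and $x_2$ are positive, so a $\delta$-vector must simultaneously keep its $\mathfrak h$-part short and its $\mathfrak k$-part long within the orbit.

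By Proposition~\ref{delt} it is enough to produce a single $v\in\mathfrak p$ for which no $u\in\mathfrak h$ makes $W=v+u$ a $\delta$-vector. I would choose $v=X+Y$ with a carefully adjusted pair $X\in\mathfrak p_1$, $Y\in\mathfrak p_2$ giving a low-rank skew-symmetric matrix whose orbit is tractable, and argue by contradiction: assume $W=v+u$ is a $\delta$-vector. Since every $\delta$-vector is a geodesic vector, Proposition~\ref{tam} forces $[u,Y]=0$ and $[X,Y]=\tfrac{x_1}{x_2-x_1}[X,u]$, pinning down $u$ in terms of $X,Y$ and bringing the factor $x_1/(x_2-x_1)$ into play. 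Computing the degree-$7$ characteristic polynomial of $W$ expresses its eigenvalues $\{0,\pm\sqrt{-1}a,\pm\sqrt{-1}b,\pm\sqrt{-1}c\}$ through the symmetric functions of $a^2,b^2,c^2$; these are exactly the constraints cutting out the orbit. The contradiction is then obtained by exhibiting a one-parameter conjugation $M(s)=\Ad(\exp(s\xi))(W)$, with $\xi\in\mathfrak g$ mixing $\mathfrak p_1$- and $\mathfrak h$-directions, along which the first variation of $g$ automatically vanishes (as it must at an orbit maximizer) while the second variation $\tfrac{d^2}{ds^2}\big|_{0}g(M(s))$ is controlled by an expression that is strictly positive exactly on $x_1<x_2<2x_1$ (the relevant factor degenerating at $x_2=x_1$ and $x_2=2x_1$). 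A positive second variation contradicts maximality, so $W$ is not a $\delta$-vector.

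The main obstacle is precisely this last computation. One must carry out the degree-$7$ characteristic-polynomial bookkeeping for the candidate $W$ and verify that its eigenvalue constraints are preserved to second order along $M(s)$ while $g$ strictly increases, tracking the three projections onto $\mathfrak h,\mathfrak p_1,\mathfrak p_2$ simultaneously. The conceptual scheme above is routine; it is the management of the seven eigenvalues and the identification of the right test direction $\xi$ (and the right $v$) that constitute the real work, and it is what dictates both the availability of an $SO(7)/U(3)$ subspace (hence $l\geq 3$) and the sharp interval $x_1<x_2<2x_1$.
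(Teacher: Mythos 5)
Your reduction to $l=3$ is sound and is essentially the paper's own: Lemma \ref{lem0} of the paper likewise realizes $(SO(7)/U(3),\mu_{x_1,x_2})$ as the totally geodesic orbit of the centralizer of a torus contained in $H$ (Proposition \ref{tor}), checks via the inclusions (\ref{emb})--(\ref{emb2}) that the induced metric has the same parameters, and then applies Theorem \ref{tot}. Your reformulation of \eqref{dvect} as maximization of $g(M)=(x_2-x_1)\langle M_{\mathfrak{p}_2},M_{\mathfrak{p}_2}\rangle-x_1\langle M_{\mathfrak{h}},M_{\mathfrak{h}}\rangle$ over the $\Ad(SO(7))$-orbit is correct, and your use of Propositions \ref{delt} and \ref{tam} to force the shape of the candidate vector $W=v+u$ mirrors the paper's Lemmas \ref{vspom1} and \ref{vspom2}.

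The gap is in the only step that carries the actual content: showing that the forced candidate $W$ is \emph{not} a $\delta$-vector. You propose to exhibit a direction $\xi$ for which the second variation of $g$ along $s\mapsto\Ad(\exp(s\xi))(W)$ is strictly positive, but you never specify $v$, never specify $\xi$, and never perform the computation --- you explicitly defer ``the identification of the right test direction $\xi$ (and the right $v$)'' as the real work. Worse, this strategy is not guaranteed to succeed even in principle: failing to be a \emph{global} maximum of $g$ on the orbit does not preclude $W$ from being a \emph{local} maximum, in which case the second variation is nonpositive in every direction and no such $\xi$ exists. Every geodesic vector is automatically a critical point of $g$ restricted to the orbit, and the relevant orbit contains several critical geodesic vectors with different values of $g$, so distinguishing a saddle from a non-global local maximum is precisely the difficulty your plan sweeps under the rug. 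The paper avoids this local-versus-global issue entirely by a global comparison: it chooses explicit coefficients $s_1,q,r$ (functions of $\lambda=x_2/x_1$, see (\ref{vect1})) and a second explicit geodesic vector $\widetilde{W}$ (see (\ref{vect2})) so that $W$ and $\widetilde{W}$ have equal degree-$7$ characteristic polynomials --- hence lie in the same orbit by Lemma \ref{vspom3}, a fact special to $so(2l+1)$ --- while
$(\widetilde{W}|_{\mathfrak{p}},\widetilde{W}|_{\mathfrak{p}})-(W|_{\mathfrak{p}},W|_{\mathfrak{p}})
=2x_1(2-\lambda)(\lambda^2-1)(\lambda-1)>0$ for $1<\lambda<2$. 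This exhibits a point of the orbit beating $W$, contradicting \eqref{dvect} regardless of any local behavior. Until you either produce such an explicit comparison vector (and the orbit-identification tool of Lemma \ref{vspom3} needed to certify it) or prove that your chosen $W$ admits a direction of positive second variation, your outline stops short of a proof at exactly the point where the proof must happen.
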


\begin{prop}[\cite{On}]\label{vvspom0}
The full connected isometry group of $(Sp(l)/U(1)\cdot Sp(l-1)
,\mu)$ is $Sp(l)/\{\pm I\}$, excepting the case $x_2=2x_1$, where
the full connected isometry group is a factor-group of $SU(2l)$ by
its center, and the metric $\mu$ is $SU(2l)$-normal (in the last
case $(Sp(l)/U(1)\cdot Sp(l-1),\mu)$ is isometric to the complex
projective space $\mathbb{C}P^{2l-1}=SU(2l)/U(1)\cdot
S(U(2l-1))$).
\end{prop}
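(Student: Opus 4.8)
The plan is to bound the full connected isometry group $I_0:=I_0(Sp(l)/U(1)\cdot Sp(l-1),\mu)$ from below and above, and then to separate the two cases by the value of the ratio $x_2/x_1$. For the lower bound, observe that $G=Sp(l)$ acts on $M=G/H$ (with $H=U(1)\cdot Sp(l-1)$) by left translations, and every such translation is an isometry of any invariant metric $\mu=\mu_{x_1,x_2}$. The kernel of this action is the largest normal subgroup of $G$ contained in $H$; since the center of $Sp(l)$ is $\{\pm I\}$ and $-I=\diag(-1,-I_{l-1})\in U(1)\cdot Sp(l-1)=H$, this kernel equals $\{\pm I\}$. Hence the effective action exhibits a connected subgroup $Sp(l)/\{\pm I\}\subseteq I_0$ acting transitively on $M$.

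For the upper bound I would use that $M$ is compact with positive Euler characteristic, so $I_0$ is a compact connected Lie group acting transitively and effectively on the underlying manifold $\mathbb{C}P^{2l-1}$ ($l\ge 2$). The connected compact groups acting transitively and effectively on $\mathbb{C}P^{2l-1}$ are classified in \cite{On}: up to finite quotients they are $SU(2l)$ and, because $2l$ is even, $Sp(l)$, the latter embedded via the standard inclusion $Sp(l)\subset SU(2l)$ coming from $\mathbb{H}^l=\mathbb{C}^{2l}$. Since $I_0\supseteq Sp(l)/\{\pm I\}$, its Lie algebra $\mathfrak{i}$ is a compact subalgebra of $su(2l)$ containing $sp(l)$; as $sp(l)$ is a maximal subalgebra of $su(2l)$ (the fixed algebra of the quaternionic involution, with $su(2l)/sp(l)$ symmetric), either $\mathfrak{i}=sp(l)$ or $\mathfrak{i}=su(2l)$. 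Thus $I_0$ is a finite quotient of $Sp(l)$ or of $SU(2l)$.

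It then remains to decide, for each candidate, which metrics $\mu_{x_1,x_2}$ it preserves. The group $Sp(l)/\{\pm I\}$ preserves all of them by construction. To determine when $SU(2l)$ (equivalently $PSU(2l)=SU(2l)/\text{center}$) preserves $\mu_{x_1,x_2}$, I would use that $\mathbb{C}P^{2l-1}=SU(2l)/S(U(1)\times U(2l-1))$ is an irreducible Hermitian symmetric space; its isotropy representation is real-irreducible, so the $SU(2l)$-invariant metrics form exactly the one-parameter family of multiples of the Fubini--Study metric, and each of them is $SU(2l)$-normal. Hence $\mu_{x_1,x_2}$ is $SU(2l)$-invariant if and only if it is proportional to the Fubini--Study metric. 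The remaining input is the computation that, under the refined $\Ad(H)$-decomposition $\mathfrak{p}=\mathfrak{p}_1\oplus\mathfrak{p}_2$ (with $\mathfrak{p}_2$ tangent to the fiber $Sp(1)/U(1)=S^2=\mathbb{C}P^1$ and $\mathfrak{p}_1$ tangent to the base $\mathbb{H}P^{l-1}$), the Fubini--Study metric restricts with the ratio $x_2=2x_1$; this follows by comparing $\langle\cdot,\cdot\rangle$ on $sp(l)$ with the restriction to $sp(l)$ of the bi-invariant product on $su(2l)$, which scales $\mathfrak{p}_1$ and $\mathfrak{p}_2$ by different factors.

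Combining these observations yields the statement: if $x_2\neq 2x_1$ then $\mathfrak{i}=sp(l)$ and $I_0=Sp(l)/\{\pm I\}$, whereas if $x_2=2x_1$ then $\mu$ is the Fubini--Study metric, $\mathfrak{i}=su(2l)$, and $I_0$ is the factor-group of $SU(2l)$ by its center, with $\mu$ being $SU(2l)$-normal; in this case $(M,\mu)$ is isometric to $\mathbb{C}P^{2l-1}=SU(2l)/U(1)\cdot S(U(2l-1))$ with its symmetric metric. I expect the main obstacle to be the normalization computation pinning the Fubini--Study metric to the single ratio $x_2=2x_1$; the classification of transitive compact actions on $\mathbb{C}P^{2l-1}$, though essential to exclude any other overgroup, is supplied by \cite{On}.
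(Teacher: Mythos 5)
The paper itself gives no proof of Proposition \ref{vvspom0} --- it is quoted from Onishchik \cite{On} --- so your reconstruction can only be judged against the standard argument, and in outline yours is the right one: lower bound via the transitive left action of $Sp(l)$ with kernel $\{\pm I\}\subset H$, upper bound via the classification of compact connected groups transitive on $\mathbb{C}P^{2l-1}$, then real-irreducibility of the isotropy representation of $S(U(1)\times U(2l-1))$ to pin the $SU(2l)$-invariant metrics down to the multiples of the Fubini--Study metric. One small logical remark: your alternative route to the dichotomy, ``$\mathfrak{i}=sp(l)$ or $\mathfrak{i}=su(2l)$ by maximality of $sp(l)$ in $su(2l)$,'' presupposes $\mathfrak{i}\subseteq su(2l)$, which is not known a priori; it is harmless only because the Onishchik classification you invoke in the same sentence yields the dichotomy directly, making the maximality argument redundant rather than load-bearing.

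The genuine flaw is in the normalization step that you yourself single out as the main obstacle. You assert that $x_2=2x_1$ ``follows by comparing $\langle\cdot,\cdot\rangle$ on $sp(l)$ with the restriction to $sp(l)$ of the bi-invariant product on $su(2l)$, which scales $\mathfrak{p}_1$ and $\mathfrak{p}_2$ by different factors.'' That mechanism cannot work: $sp(l)$ is simple, so the restriction to $sp(l)$ of any $\Ad(SU(2l))$-invariant inner product is $\Ad(Sp(l))$-invariant and hence a \emph{single} scalar multiple of $\langle\cdot,\cdot\rangle$ on all of $sp(l)$; it cannot distinguish $\mathfrak{p}_1$ from $\mathfrak{p}_2$. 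What actually produces the factor $2$ is the orthogonal projection onto the tangent complement $\mathfrak{m}$ of the $SU(2l)$-isotropy algebra $\mathfrak{s}(u(1)\oplus u(2l-1))$ inside $su(2l)$, i.e.\ the Riemannian-submersion description of the normal metric, not a restriction of inner products. Concretely, with the embedding formulas of Section \ref{SP}: for $2\leq s\leq l$ one has $d\pi(E_{1s})=E_{1s}+E_{l+1,l+s}$ (and similarly for ${\bf i}F_{1s}$, ${\bf j}F_{1s}$, ${\bf k}F_{1s}$), where the second summand lies in the isotropy subalgebra $u(2l-1)$ of the base line $\mathbb{C}e_1$, so every unit vector of $\mathfrak{p}_1$ loses exactly half of its $su(2l)$-norm-squared under the projection to $\mathfrak{m}$; whereas $d\pi({\bf j}G_1)=-\sqrt{2}E_{1,l+1}$ and $d\pi({\bf k}G_1)=-\sqrt{2}{\bf i}F_{1,l+1}$ lie entirely in $\mathfrak{m}$, so $\mathfrak{p}_2$ keeps its full norm. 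Hence the $SU(2l)$-normal metric, viewed as an $Sp(l)$-invariant metric, has parameters proportional to $(x_1,x_2)=(c,2c)$, and with this corrected computation your argument closes as intended: $x_2\neq 2x_1$ forces $\mathfrak{i}=sp(l)$ and $I_0=Sp(l)/\{\pm I\}$, while $x_2=2x_1$ gives the Fubini--Study metric, $I_0=SU(2l)$ modulo its center, and the isometry with $\mathbb{C}P^{2l-1}=SU(2l)/U(1)\cdot S(U(2l-1))$.
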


The main result of Section \ref{SP} is the following

\begin{theorem}\label{main1}
The Riemannian manifold $(Sp(l)/U(1)\cdot
Sp(l-1),\mu={\mu}_{x_1,x_2})$ is $\delta$-homogeneous if and only
if $x_1\leq x_2 \leq 2x_1$. For $x_2=x_1$ it is $Sp(l)$-normal
homogeneous; for $x_2=2x_1$ it is $SU(2l)$-normal homogeneous; for
$x_2\in (x_1,2x_1)$ it is not normal homogeneous with respect to
any its isometry group.
\end{theorem}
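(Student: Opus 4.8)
The plan is to prove Theorem \ref{main1} by reducing the $\delta$-homogeneity question to an explicit inequality that can be checked via the $\delta$-vector criterion (\ref{dvect}) together with Proposition \ref{delt}. The two endpoint cases are essentially known: for $x_2=x_1$ the metric is the normal metric induced by $\langle\cdot,\cdot\rangle$, hence $Sp(l)$-normal homogeneous and therefore $\delta$-homogeneous; for $x_2=2x_1$ the identification in Proposition \ref{vvspom0} with $\mathbb{C}P^{2l-1}=SU(2l)/U(1)\cdot S(U(2l-1))$ shows the metric is $SU(2l)$-normal. So the heart of the matter is the open interval: I must show that every metric $\mu_{x_1,x_2}$ with $x_1<x_2<2x_1$ is $\delta$-homogeneous, and that none of these (nor the endpoints in a nontrivial way) is normal with respect to any isometry group.

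First I would set up the $\delta$-vector computation concretely. Using the quaternionic matrix model for $sp(l)$, the decomposition $\mathfrak{g}=\mathfrak{h}\oplus\mathfrak{p}_1\oplus\mathfrak{p}_2$ becomes explicit: $\mathfrak{p}_1$ consists of the off-diagonal quaternionic entries coupling the first coordinate to the remaining $l-1$, while $\mathfrak{p}_2$ lives inside $sp(1)$. By Proposition \ref{delt}, $G$-$\delta$-homogeneity means that every $v\in\mathfrak{p}$ can be completed by some $u\in\mathfrak{h}$ to a $\delta$-vector $W=v+u$, and by (\ref{dvect}) the defining property of a $\delta$-vector is that $W$ maximizes the $\mathfrak{p}$-part of the metric length over its entire $\Ad(Sp(l))$-orbit. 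The key simplifying fact stated in the excerpt is that every $\Ad(G)$-orbit contains a $\delta$-vector and that all elements of an orbit share one characteristic polynomial; so I would parametrize the orbit of a generic $W$ by the invariants of the corresponding quaternionic matrix (its eigenvalue data, i.e. the coefficients of its characteristic polynomial) and express the quantity $(W|_{\mathfrak{p}},W|_{\mathfrak{p}})$ in terms of these invariants and $x_1,x_2$. The necessary constraints on $[Z,Y]$ and $[X,Y]$ from Proposition \ref{tam} cut down the candidate $W$ and let me reduce to a low-dimensional maximization.

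The main obstacle, as the authors themselves flag, will be the maximization step: showing that for $x_1<x_2<2x_1$ the completed vector $W=v+u$ can always be chosen so that (\ref{dvect}) holds for all $a\in G$, which amounts to verifying that a specific function of the orbit invariants attains its maximum exactly at the intended point. Equivalently, on the boundary of the range $x_1\le x_2\le 2x_1$ the inequality is tight, and outside it the required $\delta$-vector fails to exist. I expect this to come down to an inequality among the pinching data that holds precisely on $[x_1,2x_1]$ and fails outside it; the condition $x_1<x_2<2x_1$ appearing already in Theorem \ref{oot} is the a priori window, so I only need to confirm sufficiency there and sharpness at the endpoints. The computation is genuinely the hard core and requires careful handling of the non-commutative quaternionic structure when writing out $\Ad(a)(W)|_{\mathfrak{p}}$.

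Finally, for the non-normality claim on the open interval I would argue as follows. By Proposition \ref{vvspom0} the full connected isometry group is $Sp(l)/\{\pm I\}$ except at $x_2=2x_1$, so for $x_2\in(x_1,2x_1)$ it suffices to rule out $Sp(l)$-normality. A metric $\mu_{x_1,x_2}$ is $Sp(l)$-normal precisely when it is the restriction to $\mathfrak{p}$ of some $\Ad(Sp(l))$-invariant inner product on $\mathfrak{g}$; since $Sp(l)$ is simple, that invariant inner product is unique up to scale, forcing $x_1=x_2$. Hence for $x_2\neq x_1$ no $Sp(l)$-normal structure exists, and combined with the isometry-group identification this gives non-normality with respect to any isometry group on the whole open interval, completing the theorem.
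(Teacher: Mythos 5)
Your handling of the two endpoint cases and of non-normality is fine: for $x_2=x_1$ the metric is a multiple of $\langle\cdot,\cdot\rangle|_{\mathfrak{p}}$, for $x_2=2x_1$ Proposition \ref{vvspom0} gives $SU(2l)$-normality, and for the open interval your observation that simplicity of $sp(l)$ makes the invariant inner product unique up to scale (so $Sp(l)$-normal forces $x_1=x_2$), combined with Proposition \ref{vvspom0}, is actually spelled out more explicitly than in the paper, which delegates both this and the necessity of $x_1\le x_2\le 2x_1$ to Proposition 28 of \cite{BerNik}. (Two small caveats: ruling out normality ``with respect to any isometry group'' also needs the fact that no proper subgroup of $Sp(l)/\{\pm I\}$ acts transitively, which you assume silently; and Theorem \ref{oot}, which you invoke for the a priori window, constrains only the \emph{non-normal} candidates, so by itself it does not give the ``only if'' direction without the normality analysis.)

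The genuine gap is that you never prove the hard direction: that $\mu_{x_1,x_2}$ with $x_1<x_2<2x_1$ \emph{is} $\delta$-homogeneous. Your middle paragraphs are a strategy announcement (``parametrize the orbit by characteristic-polynomial invariants'', ``I expect this to come down to an inequality''), not an argument, and this is exactly where the entire content of Section \ref{SP} lies. Concretely, three steps are missing. First, the reduction of the problem to a single two-parameter family: using the $U(1)$-rotation on $\mathfrak{p}_2$ and two-point homogeneity of $\mathbb{H}P^{l-1}$ (Lemma \ref{lem2}), every $v\in\mathfrak{p}$ is $\Ad(H)$-equivalent to $cE_{12}+d{\bf j}G_1$, so by Proposition \ref{delt} it suffices to show that the one explicit completion $W=cE_{12}+d{\bf j}G_1-\frac{x_2-x_1}{x_1}d{\bf j}G_2$ dictated by the g.o.\ equations of Proposition \ref{tam} is a $\delta$-vector (Proposition \ref{osn2}). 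Second, the structural step that makes the orbit maximization tractable (Proposition \ref{osn3}): if $W$ were not a $\delta$-vector, a maximizing element of its $\Ad(G)$-orbit can be forced into one of exactly three explicit normal forms; this uses the totally geodesic orbit $Sp(2)/U(1)\cdot Sp(1)$ (Lemma \ref{lem1}), the fact from the proof of Theorem \ref{tot} that the tangential component of a Killing field attaining maximal length still does so on the totally geodesic submanifold, and the classification of $\delta$-vector completions there (Lemma \ref{lem3}). Without this step the maximization over $\Ad(Sp(l))W$ does not ``reduce to a low-dimensional'' problem, contrary to your expectation. Third, the actual verification: comparing characteristic polynomials under the embedding $d\pi:sp(l)\to su(2l)$ (Proposition \ref{char}) and deriving a contradiction in each of the three cases, the first of which rests on the sharp inequality of Lemma 8 of \cite{BerNik}. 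Your proposal identifies the right method but contains no substitute for these steps, so the main assertion of the theorem remains unproved.
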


\section{The spaces $SO(2l+1)/U(l)$, $l\geq 3$}\label{SO}

At first we will show that the Riemannian manifold $(SO(7)/U(3),\mu=\mu_{x_1,x_2})$, $x_1<x_2<2x_1$,
is not $\delta$-homogeneous.

Using the above notation, we have in this particular case
$$
A=\left(
\begin{array}{rrr}
0&a&b\\
-a&0&c\\
-b&-c&0\\
\end{array}
\right), \quad
B=\left(
\begin{array}{rrr}
d&e&f\\
e&g&h\\
f&h&k\\
\end{array}
\right),
$$

$$
u(3)=\left\{ \left(
\begin{array}{rrrrrrr}
0&a&b&d&e&f&0\\
-a&0&c&e&g&h&0\\
-b&-c&0&f&h&k&0\\
-d&-e&-f&0&a&b&0\\
-e&-g&-h&-a&0&c&0\\
-f&-h&-k&-b&-c&0&0\\
0&0&0&0&0&0&0\\
\end{array}
\right) \,;\quad a,b,c,d,e,f,g,h,k \in \mathbb{R}\, \right\},
$$
$$
\mathfrak{p}_1=\left\{ X= \left(
\begin{array}{rrrrrrr}
0&0&0&0&0&0&s_1\\
0&0&0&0&0&0&s_2\\
0&0&0&0&0&0&s_3\\
0&0&0&0&0&0&s_4\\
0&0&0&0&0&0&s_5\\
0&0&0&0&0&0&s_6\\
-s_1&-s_2&-s_3&-s_4&-s_5&-s_6&0\\
\end{array}
\right) \,;\quad s_i \in \mathbb{R}\, \right\},
$$
$$
\mathfrak{p}_2=\left\{ Y=\left(
\begin{array}{rrrrrrr}
0&l&m&0&p&q&0\\
-l&0&n&-p&0&r&0\\
-m&-n&0&-q&-r&0&0\\
0&p&q&0&-l&-m&0\\
-p&0&r&l&0&-n&0\\
-q&-r&0&m&n&0&0\\
0&0&0&0&0&0&0\\
\end{array}
\right) \,;\quad l,m,n,p,q,r \in \mathbb{R}\, \right\}.
$$
Note that for vectors $X$ from $\mathfrak{p}_1$ as above we have $\langle
X,X\rangle =s_1^2+s_2^2+s_3^2+s_4^2+s_5^2+s_6^2$, and for vectors $Y\in
\mathfrak{p}_2$ we have $\langle Y,Y\rangle =2l^2+2m^2+2n^2+2p^2+2q^2+2r^2$.

Let $E_{i,j}$ be a $(7\times 7)$-matrix, whose $(i,j)$-th entry is
equal to $1$, and all other entries are zero. For any $1\leq i <j\leq 7$,
we put $F_{i,j}=E_{i,j}-E_{j,i}$.

\begin{prop}\label{osn}
The Riemannian manifold $(SO(7)/U(3),\mu={\mu}_{x_1,x_2})$ is not
$\delta$-homogeneous if $x_1<x_2<2x_1$.
\end{prop}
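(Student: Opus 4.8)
The plan is to contradict the infinitesimal criterion of Proposition \ref{delt}: since $(SO(7)/U(3),\mu_{x_1,x_2})$ is $SO(7)$-$\delta$-homogeneous precisely when every $v\in\mathfrak{p}$ admits a completion $v+u$, $u\in\mathfrak{h}$, to a $\delta$-vector, it suffices to produce a single $v$ with no such completion. Because $\mathfrak{p}_1\cong\mathbb{C}^3$ is $\Ad(U(3))$-irreducible with the isotropy group acting transitively on its spheres, all nonzero vectors of $\mathfrak{p}_1$ are equivalent, so I would first test $v=c\,F_{1,7}\in\mathfrak{p}_1$ (any fixed $c>0$). A completion then has the shape $W=c\,F_{1,7}+Z$ with $Z\in\mathfrak{h}=u(3)$ and vanishing $\mathfrak{p}_2$-part, and its $\mathfrak{p}$-norm is simply $(W|_{\mathfrak{p}},W|_{\mathfrak{p}})=x_1c^2$.

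First I would cut down the candidate completions. A $\delta$-vector is in particular a geodesic vector, so Proposition \ref{tam} applies with $Y=0$ and forces $[c\,F_{1,7},Z]=0$. A short computation with the embedding $u(3)\subset so(6)\subset so(7)$ (writing $J$ for the complex structure pairing the coordinates $1,2,3$ with $4,5,6$) shows that this centralizer condition makes $Z$ annihilate $e_1,e_4,e_7$, i.e. $Z$ lies in the copy of $u(2)$ acting on the coordinates $2,3,5,6$. Hence the only completions that can possibly be $\delta$-vectors are the matrices $W=c\,F_{1,7}+Z$, $Z\in u(2)$, whose eigenvalues are $\{\pm ic,\pm i\beta_1,\pm i\beta_2,0\}$ for the two ``angles'' $\beta_1,\beta_2$ of $Z$.

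The heart of the argument is then to show, for every choice of $(\beta_1,\beta_2)$, that $W$ does \emph{not} maximize the $\mathfrak{p}$-norm over its $\Ad(SO(7))$-orbit. Since two elements of $so(7)$ with the same eigenvalues are $SO(7)$-conjugate, this orbit coincides with the set of all $W'\in so(7)$ sharing the degree-seven characteristic polynomial of $W$, and maximality means $x_1c^2\ge x_1\langle X',X'\rangle+x_2\langle Y',Y'\rangle$ for all such $W'$. Using the orbit invariant $N=\langle W,W\rangle=c^2+\langle Z,Z\rangle$ together with
\[
x_1\langle X',X'\rangle+x_2\langle Y',Y'\rangle=x_1N+(x_2-x_1)\langle Y',Y'\rangle-x_1\langle Z',Z'\rangle,
\]
the task reduces to exhibiting, in the characteristic-polynomial level set of $W$, a competitor $W'=X'+Y'+Z'$ with a nonzero $\mathfrak{p}_2$-component paid for by a sufficiently small $\mathfrak{h}$-component, so that the right-hand side exceeds $x_1c^2$. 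Here one must respect the structural fact (established by a direct eigenvalue computation) that every nonzero element of $\mathfrak{p}_2$ has rank four, its nonzero eigenvalues forming a single pair $\pm i\sigma$ of multiplicity two; this is exactly what prevents the naive move of sliding the rank-two block $c\,F_{1,7}$ straight into $\mathfrak{p}_2$ and forces a genuine repacking of the eigenvalues $\{\pm ic,\pm i\beta_1,\pm i\beta_2\}$ among $\mathfrak{p}_1,\mathfrak{p}_2,\mathfrak{h}$.

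I expect this last step to be the main obstacle, and it is where the degree-seven characteristic-polynomial computations enter. The point is that in the admissible range $x_1<x_2<2x_1$ the candidate $W$ is typically a \emph{local} maximum of the $\mathfrak{p}$-norm on its orbit (a second-variation test along one-parameter subgroups produces no increase in this range), so the failure can only be detected by a \emph{global} competitor $W'$, constructed by matching the three nontrivial coefficients of the characteristic polynomial, equivalently the invariants $\trace(W^2),\trace(W^4),\trace(W^6)$, and then verifying the norm inequality; the hypotheses $x_1<x_2$ and $x_2<2x_1$ are used precisely in this final comparison. Once such a $W'$ is produced for each $(\beta_1,\beta_2)$ — with a separate, easier treatment of the degenerate cases where two of $c,\beta_1,\beta_2$ coincide, in which a rank-four $\mathfrak{p}_2$-block can be formed outright — no completion of $v=c\,F_{1,7}$ is a $\delta$-vector, and Proposition \ref{delt} yields that $(SO(7)/U(3),\mu_{x_1,x_2})$ is not $\delta$-homogeneous.
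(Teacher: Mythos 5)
Your strategy breaks down at its very first step, and the gap is not the unfinished characteristic-polynomial matching but the choice of test vector: the vector $v=cF_{1,7}\in\mathfrak{p}_1$ \emph{does} admit a completion to a $\delta$-vector, namely $W=cF_{1,7}$ itself (take $Z=0$). Indeed, the $\Ad(SO(7))$-orbit of $cF_{1,7}$ consists exactly of the matrices $W'=c(uv^{T}-vu^{T})$ with $u,v$ orthonormal in $\mathbb{R}^{7}$. Write $u=(\tilde u,u_{7})$, $v=(\tilde v,v_{7})$ and $t=u_{7}^{2}+v_{7}^{2}\in[0,1]$. Then the $\mathfrak{p}_{1}$-component of $W'$ has squared norm $c^{2}t$; the $so(6)$-block $M=c(\tilde u\tilde v^{T}-\tilde v\tilde u^{T})$ has $\langle M,M\rangle=c^{2}(1-t)$; and since the $\mathfrak{p}_{2}$-projection of $M$ is $\frac{1}{2}(M+JMJ)$ while $\trace(MJMJ)=2c^{2}\bigl((J\tilde v)\cdot\tilde u\bigr)^{2}\geq 0$, i.e. $\langle M,JMJ\rangle\leq 0$, the $\mathfrak{p}_{2}$-component has squared norm at most $\frac{1}{2}c^{2}(1-t)$. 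Hence
\begin{equation*}
(W'|_{\mathfrak{p}},W'|_{\mathfrak{p}})\ \leq\ x_{1}c^{2}t+\frac{x_{2}}{2}\,c^{2}(1-t)\ \leq\ x_{1}c^{2}\ =\ (W|_{\mathfrak{p}},W|_{\mathfrak{p}})
\end{equation*}
whenever $x_{2}\leq 2x_{1}$. So $cF_{1,7}$ maximizes the $\mathfrak{p}$-norm over its whole orbit, i.e. it satisfies (\ref{dvect}) and is a $\delta$-vector. Consequently no contradiction can be extracted from $v=cF_{1,7}$ (nor, by the $\Ad(U(3))$-transitivity you invoke, from any vector of $\mathfrak{p}_{1}$); in particular the ``global competitor'' you hope to construct for $(\beta_1,\beta_2)=(0,0)$ provably does not exist, so the plan cannot be completed for any amount of computation. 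Your preliminary reductions (the centralizer of $F_{1,7}$ in $u(3)$ is a $u(2)$ on the coordinates $2,3,5,6$; nonzero elements of $\mathfrak{p}_2$ have rank four with doubled eigenvalues; orbits in $so(7)$ are determined by characteristic polynomials) are all correct, but they are in service of a dead end.

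This is precisely why the paper tests a genuinely \emph{mixed} vector: $X=s_{1}F_{1,7}\in\mathfrak{p}_{1}$, $Y=q(F_{1,6}-F_{3,4})+r(F_{2,6}-F_{3,5})\in\mathfrak{p}_{2}$ with $s_{1},q,r\neq 0$. For such $v=X+Y$ the geodesic conditions of Proposition \ref{tam} pin down the completion $Z\in\mathfrak{h}$ \emph{uniquely} (Lemma \ref{vspom1}), so there is exactly one candidate $\delta$-vector $W$; then, for one explicit choice of $(s_{1},q,r)$ depending on $\lambda=x_{2}/x_{1}$, the paper exhibits another vector $\widetilde W$ of the same geodesic form, with the same characteristic polynomial --- hence in the same orbit by Lemma \ref{vspom3} --- but with strictly larger $\mathfrak{p}$-norm, the strictness $2(2-\lambda)(\lambda^{2}-1)(\lambda-1)>0$ using exactly $1<\lambda<2$ (Lemma \ref{vspom4}). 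If you want to salvage your approach, you must move the test vector off $\mathfrak{p}_{1}$ and off $\mathfrak{p}_{2}$: the failure of $\delta$-homogeneity on this space is visible only on vectors whose two components interact.
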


For $W\in so(7)$, we denote by $O(W)$ the orbit of $W$ under the action of
$\Ad(SO(7))$, i.e.
$$
O(W)=\{V\in so(7)\,|\,~ \exists Q\in SO(7), V=QWQ^{-1}\}.
$$

\begin{lemma}\label{vspom1}
Let $W=X+Y+Z$, where $X=s_1F_{1,7}\in \mathfrak{p}_1$ ($s_1\neq 0$),
$Y=q(F_{1,6}-F_{3,4})+r(F_{2,6}-F_{3,5})\in \mathfrak{p}_2$ ($q\neq 0$, $r\neq 0$),
$Z\in
\mathfrak{h}=u(3)$ (see above), be a geodesic vector on
$(SO(7)/U(3),\mu)$ for $x_1<x_2<2x_1$. Then

\begin{multline*}
W=\left(
\begin{array}{ccccccr}
0 & 0 & 0 & 0 & 0 & \frac{x_2}{x_1}q & s_1 \\
0 & 0 & 0 & 0 & 0 & \frac{x_2}{x_1}r & 0 \\
0 & 0 & 0 & \frac{x_2-2x_1}{x_1}q & \frac{x_2-2x_1}{x_1}r & 0 & 0 \\
0 & 0 & \frac{2x_1-x_2}{x_1}q & 0 & 0 & 0 & 0 \\
0 & 0 & \frac{2x_1-x_2}{x_1}r & 0 & 0 & 0 & 0 \\
-\frac{x_2}{x_1}q & -\frac{x_2}{x_1}r & 0 & 0 & 0 & 0 & 0 \\
-s_1 & 0 & 0 & 0 & 0 & 0 & 0
\end{array}
 \right)=
\\
=s_1F_{1,7}+\frac{x_2}{x_1}qF_{1,6}+\frac{x_2}{x_1}rF_{2,6}+\frac{x_2-2x_1}{x_1}qF_{3,4}+\frac{x_2-2x_1}{x_1}rF_{3,5}.
\end{multline*}
\end{lemma}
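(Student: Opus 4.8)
The plan is to invoke Proposition \ref{tam} and then solve the two resulting commutator equations for the unknown component $Z\in\mathfrak{h}=u(3)$. Since $x_1\neq x_2$, the assumption that $W=X+Y+Z$ is a geodesic vector gives, by Proposition \ref{tam}, the two relations $[Z,Y]=0$ and $[X,Z]=\frac{x_2-x_1}{x_1}[X,Y]$ (the second being the relation $[X,Y]=\frac{x_1}{x_2-x_1}[X,Z]$ rewritten). I would write $Z$ as the general element of $u(3)$ with the nine real parameters $a,b,c,d,e,f,g,h,k$ appearing in the explicit description of $u(3)$ above, and aim to show that these two relations force $a=b=c=d=e=g=k=0$ together with $f=\frac{x_2-x_1}{x_1}q$ and $h=\frac{x_2-x_1}{x_1}r$; inserting this $Z$ into $W=X+Y+Z$ then reproduces the asserted matrix.

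First I would extract the consequences of the relation involving $X$. Because $X=s_1F_{1,7}$ is supported only in the first and the seventh row and column, while $Y$ and $Z$ are supported in the indices $1,\dots,6$, the brackets $[X,Y]$ and $[X,Z]$ are easy to read off and both land in $\mathfrak{p}_1$ (consistently with $[\mathfrak{p}_2,\mathfrak{p}_1]\subseteq\mathfrak{p}_1$ and $[\mathfrak{h},\mathfrak{p}_1]\subseteq\mathfrak{p}_1$, the latter by $\Ad(U(l))$-invariance of $\mathfrak{p}_1$). A short computation gives $[X,Y]=s_1q\,F_{6,7}$ and $[X,Z]=s_1(aF_{2,7}+bF_{3,7}+dF_{4,7}+eF_{5,7}+fF_{6,7})$, the coefficients in the latter being exactly the first-row entries of $Z$. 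Since $s_1\neq 0$, comparing coefficients in $[X,Z]=\frac{x_2-x_1}{x_1}[X,Y]$ along the independent vectors $F_{2,7},\dots,F_{6,7}$ of $\mathfrak{p}_1$ immediately yields $a=b=d=e=0$ and $f=\frac{x_2-x_1}{x_1}q$.

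It then remains to pin down $c,g,h,k$, and this is where the relation $[Z,Y]=0$ enters; this is the main computational step. With $a=b=d=e=0$ and $f$ already known, $Z$ is reduced to a four-parameter matrix, and $[Z,Y]$ lies in $\mathfrak{p}_2$ (using $[\mathfrak{h},\mathfrak{p}_2]\subseteq\mathfrak{p}_2$). I would compute this $7\times 7$ commutator explicitly and expand it along the six-dimensional basis of $\mathfrak{p}_2$ exhibited above, namely the generators $F_{1,2}-F_{4,5}$, $F_{1,3}-F_{4,6}$, $F_{2,3}-F_{5,6}$, $F_{1,5}-F_{2,4}$, $F_{1,6}-F_{3,4}$, $F_{2,6}-F_{3,5}$ attached to the parameters $l,m,n,p,q,r$; setting all six coefficients to zero produces a linear system in $c,g,h,k$ whose coefficients are linear in $q,r$ and in the already-determined $f$. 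The expected and to-be-verified outcome is that this system is consistent and forces $c=g=k=0$ and $h=\frac{x_2-x_1}{x_1}r$. The main obstacle is precisely organizing this commutator so that the six scalar equations are transparent; the one subtlety to check is that the equations carrying the $f$-entry of $Z$ are compatible with, rather than over-constraining, the value $f=\frac{x_2-x_1}{x_1}q$ already obtained.

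Finally, assembling $W=X+Y+Z$ with $X=s_1F_{1,7}$, $Y=q(F_{1,6}-F_{3,4})+r(F_{2,6}-F_{3,5})$, and the determined $Z=\frac{x_2-x_1}{x_1}(q(F_{1,6}+F_{3,4})+r(F_{2,6}+F_{3,5}))$, and collecting the coefficient of each $F_{i,j}$, the $F_{1,6}$ and $F_{2,6}$ coefficients become $\frac{x_2}{x_1}q$ and $\frac{x_2}{x_1}r$, while the $F_{3,4}$ and $F_{3,5}$ coefficients become $\frac{x_2-2x_1}{x_1}q$ and $\frac{x_2-2x_1}{x_1}r$, and the $F_{1,7}$ coefficient remains $s_1$. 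This is exactly the displayed matrix, which completes the proof.
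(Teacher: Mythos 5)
Your proposal is correct and follows essentially the same route as the paper: apply Proposition \ref{tam}, compute $[X,Y]=s_1qF_{6,7}$, $[X,Z]=s_1(aF_{2,7}+bF_{3,7}+dF_{4,7}+eF_{5,7}+fF_{6,7})$ and $[Z,Y]$, compare coefficients along linearly independent vectors of $\mathfrak{p}_1$ and $\mathfrak{p}_2$, and conclude $a=b=c=d=e=g=k=0$, $f=\frac{x_2-x_1}{x_1}q$, $h=\frac{x_2-x_1}{x_1}r$. The only (immaterial) difference is that you use the $[X,Z]$ relation first and then feed the result into $[Z,Y]=0$, whereas the paper expands both commutators for a general $Z$ at once; your claimed outcome of the $[Z,Y]=0$ system ($qh=fr$, $qk=0$, $rg+rk=0$, $cq=br$) is exactly what the paper's displayed expansion yields.
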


\begin{proof}
Since $W$ is geodesic vector, then from Proposition \ref{tam} we
get $[Z,Y]=0$, $[X,Y]=x_1/(x_2-x_1)[X,Z]$.
Direct calculations show that
$$[Z,Y]=(qh-fr)(F_{1,2}-F_{4,5})+(qd+qk+er)(F_{1,3}-F_{4,6})+$$
$$(rk+rg+eq)(F_{2,3}-F_{5,6})+(cq-br)(F_{1,5}-F_{2,4})+ar(F_{1,6}-F_{3,4})+aq(F_{3,5}-F_{2,6}),$$
$$[X,Y]=s_1qF_{6,7},[X,Z]=s_1(aF_{2,7}+bF_{3,7}+dF_{4,7}+eF_{5,7}+fF_{6,7}).$$
The vectors $F_{1,2}-F_{4,5}$, $F_{1,3}-F_{4,6}$, $F_{2,3}-F_{5,6}$, $F_{1,5}-F_{2,4}$, $F_{1,6}-F_{3,4}$,
$F_{3,5}-F_{2,6}$ are linearly independent in $\mathfrak{p}_2$, and the vectors $F_{i,7}$, $2\leq i \leq 6$,
are linearly independent in $\mathfrak{p}_1$.
Therefore, $a=b=d=e=c=k=g=0$,
$f=\frac{x_2-x_1}{x_1}q$ and $h=\frac{x_2-x_1}{x_1}r$.
The lemma is proved.
\end{proof}

\begin{remark}
\label{r1}
Note, that in the paper \cite{Du}, the structure of all geodesic vectors on \linebreak
$(SO(7)/U(3),\mu)$
is studied.
\end{remark}

\begin{lemma}\label{vspom2}
If the Riemannian manifold $(SO(7)/U(3),\mu)$, $x_1<x_2<2x_1$, is
$SO(7)$-$\delta$-homogeneous then for every $s_1\neq 0$, $q\neq 0$, $r\neq 0$ the vector
$$
W=s_1F_{1,7}+\frac{x_2}{x_1}qF_{1,6}+\frac{x_2}{x_1}rF_{2,6}+\frac{x_2-2x_1}{x_1}qF_{3,4}+\frac{x_2-2x_1}{x_1}rF_{3,5}
$$
is $\delta$-vector on $(SO(7)/U(3),\mu)$.
\end{lemma}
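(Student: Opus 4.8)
The plan is to feed a carefully chosen tangent vector into the $\delta$-homogeneity criterion of Proposition \ref{delt} and then let Lemma \ref{vspom1} do the rest. Fix $s_1\neq 0$, $q\neq 0$, $r\neq 0$ and set
$$
v=s_1F_{1,7}+q(F_{1,6}-F_{3,4})+r(F_{2,6}-F_{3,5}),
$$
whose $\mathfrak{p}_1$-component is $s_1F_{1,7}$ and whose $\mathfrak{p}_2$-component is $q(F_{1,6}-F_{3,4})+r(F_{2,6}-F_{3,5})$; in particular $v\in\mathfrak{p}$. Assuming $(SO(7)/U(3),\mu)$ is $SO(7)$-$\delta$-homogeneous, Proposition \ref{delt} supplies some $u\in\mathfrak{h}=u(3)$ for which $v+u$ is a $\delta$-vector on $(SO(7)/U(3),\mu)$.

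The next step invokes the general fact, recorded in the Introduction, that every $\delta$-vector is a geodesic (g.o.) vector. Thus $v+u$ is a geodesic vector, and writing $v+u=X+Y+Z$ with $X=s_1F_{1,7}\in\mathfrak{p}_1$, $Y=q(F_{1,6}-F_{3,4})+r(F_{2,6}-F_{3,5})\in\mathfrak{p}_2$ and $Z=u\in\mathfrak{h}$, I observe that these are precisely the data to which Lemma \ref{vspom1} applies: the hypotheses $s_1\neq 0$, $q\neq 0$, $r\neq 0$ hold by our choice, and the $\mathfrak{p}$-part of $v+u$ is $v$ while its $\mathfrak{h}$-part is $u$, so the decomposition into $\mathfrak{p}_1\oplus\mathfrak{p}_2\oplus\mathfrak{h}$ is exactly as required.

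Finally, Lemma \ref{vspom1} forces $v+u$ to coincide with the explicit matrix displayed there, that is, with
$$
W=s_1F_{1,7}+\frac{x_2}{x_1}qF_{1,6}+\frac{x_2}{x_1}rF_{2,6}+\frac{x_2-2x_1}{x_1}qF_{3,4}+\frac{x_2-2x_1}{x_1}rF_{3,5};
$$
equivalently, the corrector $u=Z$ is pinned down to $\frac{x_2-x_1}{x_1}\bigl(q(F_{1,6}+F_{3,4})+r(F_{2,6}+F_{3,5})\bigr)$. Hence $W=v+u$ is the $\delta$-vector produced above, which is exactly the assertion.

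There is essentially no obstacle remaining at this final stage, since the entire computational force of the argument is already contained in Lemma \ref{vspom1}; the only routine checks are that $v$ lies in $\mathfrak{p}$ and that the nonvanishing hypotheses on $s_1,q,r$ are met, both immediate. The one point genuinely deserving care is the logical direction: Proposition \ref{delt} guarantees only the existence of \emph{some} $u\in\mathfrak{h}$ making $v+u$ a $\delta$-vector, and it is Lemma \ref{vspom1} that upgrades this to the \emph{uniqueness} of $u$ and to its precise value, thereby identifying $v+u$ with the prescribed $W$.
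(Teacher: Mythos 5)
Your proof is correct and follows essentially the same route as the paper's: invoke Proposition \ref{delt} to produce $u\in\mathfrak{h}$ with $v+u$ a $\delta$-vector, note that any $\delta$-vector is a geodesic vector, and then use Lemma \ref{vspom1} to force $v+u=W$. Your explicit identification of $u$ as $\frac{x_2-x_1}{x_1}\bigl(q(F_{1,6}+F_{3,4})+r(F_{2,6}+F_{3,5})\bigr)$ is a correct (and slightly more detailed) restatement of what the paper leaves implicit.
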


\begin{proof}
If $(SO(7)/U(3),\mu)$ is $SO(7)$-$\delta$-homogeneous, then for
every vector of the form $V=X+Y$, where  $X=s_1F_{1,7}\in \mathfrak{p}_1$ ($s_1\neq 0$),
$Y=q(F_{1,6}-F_{3,4})+r(F_{2,6}-F_{3,5})\in \mathfrak{p}_2$ ($q\neq 0$, $r\neq 0$),
there is $Z\in \mathfrak{h}$ such that the vector
$\widetilde{W}=X+Y+Z$ is $\delta$-vector (see Proposition \ref{delt}).
In particular, such $\widetilde{W}$ should be a geodesic
vector. According to Lemma \ref{vspom1}, we get that
$$
\widetilde{W}=W=s_1F_{1,7}+\frac{x_2}{x_1}qF_{1,6}+\frac{x_2}{x_1}rF_{2,6}+
\frac{x_2-2x_1}{x_1}qF_{3,4}+\frac{x_2-2x_1}{x_1}rF_{3,5}.
$$
Therefore, this $W$ is a $\delta$-vector.
\end{proof}

\begin{lemma}\label{vspom3}
Let $A,B\in so(7)$. Then $A,B$ are in the same orbit of $\Ad(SO(7))$ if and only if their
characteristic polynomials coincide.
\end{lemma}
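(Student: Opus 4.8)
The plan is to characterize the $\Ad(SO(7))$-orbits of elements of $so(7)$ via their characteristic polynomials, exploiting the fact that a real skew-symmetric matrix is always orthogonally diagonalizable into a canonical block form. The forward direction is immediate: if $B = QAQ^{-1}$ for some $Q \in SO(7)$, then conjugation preserves the characteristic polynomial, so $A$ and $B$ have the same characteristic polynomial. The substance of the lemma lies in the converse.

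For the converse, I would first recall the normal form for skew-symmetric matrices. Any $A \in so(7)$ is orthogonally similar (via some $Q \in O(7)$) to a block-diagonal matrix consisting of $2 \times 2$ blocks of the form $\left(\begin{smallmatrix} 0 & \lambda_j \\ -\lambda_j & 0 \end{smallmatrix}\right)$ together with a single zero on the diagonal (since $7$ is odd, there is always an odd-dimensional kernel contribution, forcing at least one zero eigenvalue). The eigenvalues of such a block are $\pm \sqrt{-1}\,\lambda_j$, so the characteristic polynomial of $A$ is $\lambda \prod_{j=1}^{3}(\lambda^2 + \lambda_j^2)$, which depends only on the multiset $\{\lambda_j^2\}$. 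Hence the characteristic polynomial determines the canonical block form up to the order of the blocks and the signs of the $\lambda_j$. Two matrices with the same characteristic polynomial therefore reduce to block forms that differ only by a permutation of blocks and by changing signs $\lambda_j \mapsto -\lambda_j$; both of these operations are realized by orthogonal conjugation (permutations of the coordinate pairs, and conjugation by a reflection within a block flips the sign of $\lambda_j$). Thus $A$ and $B$ are conjugate by some element of $O(7)$.

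The one remaining point is to upgrade conjugacy in $O(7)$ to conjugacy in $SO(7)$, i.e. to arrange that the conjugating matrix has determinant $+1$. The key observation is that because the dimension $7$ is odd, $-I \in O(7)$ has determinant $(-1)^7 = -1$; more usefully, the canonical form has a nontrivial kernel (the fixed zero eigenvalue), so there is a coordinate direction on which we may freely compose with a reflection without altering the conjugation. Concretely, if $Q \in O(7)$ satisfies $B = QAQ^{-1}$ but $\det Q = -1$, I would replace $Q$ by $QR$, where $R$ is the reflection negating the coordinate axis spanning (a vector in) $\ker A$. Then $R$ commutes with the block form of $A$ (it acts trivially on every $2\times 2$ block and merely negates the kernel vector, which $A$ annihilates), so $RAR^{-1} = A$ and hence $(QR)A(QR)^{-1} = QAQ^{-1} = B$, while $\det(QR) = -\det Q = +1$. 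This yields a conjugating element in $SO(7)$, completing the proof.

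I expect the main obstacle to be precisely this last determinant-correction step: one must verify that a sign-reversing reflection can be chosen to commute with $A$, which is exactly where the oddness of $7$ (guaranteeing a zero eigenvalue and hence a reflection axis inside $\ker A$) is essential. If instead the dimension were even and $A$ were nonsingular, the argument would require more care, since no reflection could be hidden in the kernel. For the case at hand, however, the existence of the zero eigenvalue makes the passage from $O(7)$ to $SO(7)$ routine.
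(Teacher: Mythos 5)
Your proof is correct, and it takes a genuinely more elementary route than the paper's. The paper argues via Lie theory: every element of $so(7)$ is $\Ad(SO(7))$-conjugate to a point of the standard closed Weyl chamber (block-diagonal form with parameters $z_1\geq z_2\geq z_3\geq 0$), and distinct chamber points have distinct characteristic polynomials, so equal polynomials force conjugacy to one and the same chamber point. All of the genuine content --- in particular that the reduction can be carried out inside $SO(7)$ rather than merely $O(7)$ --- is absorbed into the standard maximal torus theorem and the fact that the Weyl group of type $B_3$ consists of all signed permutations of $(z_1,z_2,z_3)$. You reconstruct exactly this by hand: the $O(7)$ normal form for skew-symmetric matrices, the observation that the characteristic polynomial $\lambda\prod_{j}(\lambda^2+\lambda_j^2)$ pins down the blocks up to permutation and sign changes (both realizable by orthogonal conjugation), and then the explicit determinant repair $Q\mapsto QR$ with $R=I-2vv^{T}$ for a unit vector $v\in\ker A$; since $Av=0$ and $v^{T}A=0$ by skew-symmetry, indeed $RAR^{-1}=A$ and $\det(QR)=1$, and the kernel is nontrivial because a skew-symmetric matrix in odd dimension is singular. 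Your reflection step is precisely where the sign-change elements of the $B_3$ Weyl group are hiding, and your closing remark correctly isolates why oddness of the dimension is essential: the statement fails in even dimensions (already in $so(2)$, where a rotation generator and its negative share the characteristic polynomial $\lambda^2+1$ yet cannot be conjugate because $SO(2)$ is abelian). What the paper's proof buys is brevity, at the cost of citing the Weyl chamber machinery; what yours buys is a self-contained linear-algebra argument that makes explicit the passage from $O(7)$-conjugacy to $SO(7)$-conjugacy, an issue the paper's proof never has to mention.
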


\begin{proof}
It is obvious that if $A$ and $B$ are in the same orbit of $\Ad(SO(7))$,
then their characteristic polynomials coincide.

Suppose, that characteristic polynomials of $A$ and $B$ are coincide. The standard
Weyl chamber of the Lie algebra $so(7)$ is the following (see \cite{Bes}):
$$
K=\left\{ \left.\diag\left(\left(
\begin{array}{rr}
0&-z_1\\
z_1&0\\
\end{array}
\right),
\left(
\begin{array}{rr}
0&-z_2\\
z_2&0\\
\end{array}
\right),
\left(
\begin{array}{rr}
0&-z_3\\
z_3&0\\
\end{array}
\right)
,0
\right)\right|~z_1\geq z_2\geq z_3\geq 0 \right\}.
$$

If $A$ and $B$ are conjugate to distinct elements of the Weyl chamber, then,
as it is easy to see, their characteristic polynomials are distinct.
Hence, $A$ and $B$ are conjugate to one and the same element of the Weyl chamber.
This implies that $A$ and $B$ are in one and the same orbit of $\Ad(SO(7))$.
The lemma is proved.
\end{proof}

In what follows we need the value $\lambda= \frac{x_2}{x_1}$.
Now we consider the following two geodesic vectors $W$ and $\widetilde{W}$ (see Lemma \ref{vspom1}):
\begin{equation}\label{vect1}
W=s_1F_{1,7}+\frac{x_2}{x_1}qF_{1,6}+\frac{x_2}{x_1}rF_{2,6}+\frac{x_2-2x_1}{x_1}qF_{3,4}+\frac{x_2-2x_1}{x_1}rF_{3,5},
\end{equation}
where
$$
s_1=\sqrt{\lambda^2((2-\lambda)^2+\lambda^3-1)},
q=\sqrt{\frac{(\lambda^3-1)(1-(2-\lambda)^2)}{(2-\lambda)^2+\lambda^3-1}},
r=\sqrt{\frac{\lambda^3(2-\lambda)^2}{(2-\lambda)^2+(\lambda^3-1)}};
$$
\begin{equation}\label{vect2}
\widetilde{W}=\widetilde{s}_1F_{1,7}+\frac{x_2}{x_1}\widetilde{q}F_{1,6}+\frac{x_2}{x_1}\widetilde{r}F_{2,6}
+\frac{x_2-2x_1}{x_1}\widetilde{q}F_{3,4}+\frac{x_2-2x_1}{x_1}\widetilde{r}F_{3,5},
\end{equation}
where
$$
\widetilde{s}_1=\sqrt{(2-\lambda)^2+\lambda^4(\lambda-1)},
\widetilde{q}=\sqrt{\frac{\lambda^2(\lambda-1)(\lambda^4-(2-\lambda)^2)}{(2-\lambda)^2+\lambda^4(\lambda-1)}},
\widetilde{r}=\sqrt{\frac{\lambda^3(2-\lambda)^2}{(2-\lambda)^2+\lambda^4(\lambda-1)}}.
$$

\begin{lemma}\label{vspom4}
The vector $W$ (see (\ref{vect1})) is not a $\delta-$vector on
$(SO(7)/U(3),\mu)$ for $x_1<x_2<2x_1$.
\end{lemma}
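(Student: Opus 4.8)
The plan is to exhibit a single matrix lying in the $\Ad(SO(7))$-orbit $O(W)$ of $W$ whose $\mathfrak{p}$-component is strictly longer than that of $W$; by the defining inequality (\ref{dvect}) for a $\delta$-vector this immediately shows $W$ is not a $\delta$-vector. The natural candidate is the vector $\widetilde{W}$ of (\ref{vect2}), and the argument splits into two independent parts: proving $\widetilde{W}\in O(W)$, and comparing the two $\mathfrak{p}$-norms.

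For the first part I would invoke Lemma \ref{vspom3}: it suffices to check that $W$ and $\widetilde{W}$ have the same characteristic polynomial. Both are sparse skew-symmetric matrices whose nonzero entries couple the index sets $\{1,2,6,7\}$ and $\{3,4,5\}$ only internally, so after the obvious permutation each decomposes as a direct sum of a $4\times4$ and a $3\times3$ skew block. The $3\times3$ block contributes one frequency $\theta_1$ with $\theta_1^2=(2-\lambda)^2(q^2+r^2)$, while the $4\times4$ block contributes a pair $\theta_2,\theta_3$ determined by its trace and Pfaffian, giving $\theta_2^2+\theta_3^2=\lambda^2(q^2+r^2)+s_1^2$ and $\theta_2^2\theta_3^2=\lambda^2 r^2 s_1^2$. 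Since the characteristic polynomial is $t\prod_i(t^2+\theta_i^2)$, it is enough to match the three elementary symmetric functions of $\theta_1^2,\theta_2^2,\theta_3^2$ for $W$ and $\widetilde{W}$. Here the particular values in (\ref{vect1}), (\ref{vect2}) are engineered precisely so that $q^2+r^2=1$, $\widetilde q^2+\widetilde r^2=\lambda^2$, $s_1^2=\lambda^2\bigl((2-\lambda)^2+\lambda^3-1\bigr)$ and $\widetilde s_1^2=(2-\lambda)^2+\lambda^4(\lambda-1)$; feeding these into the three symmetric functions one finds in both cases $e_1=(2-\lambda)^2(1+\lambda^2)+\lambda^5$, $e_3=\lambda^7(2-\lambda)^4$, and a matching value of $e_2$, so the characteristic polynomials agree and $\widetilde W\in O(W)$.

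For the second part I would split $W$ along $\mathfrak{g}=\mathfrak{h}\oplus\mathfrak{p}_1\oplus\mathfrak{p}_2$. The $F_{1,7}$ term lies in $\mathfrak{p}_1$, while the pair $\lambda q F_{1,6}+(\lambda-2)q F_{3,4}$ splits as $q(F_{1,6}-F_{3,4})+(\lambda-1)q(F_{1,6}+F_{3,4})$, whose first summand lies in $\mathfrak{p}_2$ and second in $\mathfrak{h}$, and similarly for the $r$-terms. Hence $W|_{\mathfrak{p}_2}=q(F_{1,6}-F_{3,4})+r(F_{2,6}-F_{3,5})$, and using the norm formulas recorded above one gets $(W|_{\mathfrak p},W|_{\mathfrak p})=x_1 s_1^2+2x_2(q^2+r^2)$, with the analogous expression for $\widetilde W$. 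Substituting the explicit values and writing $x_2=\lambda x_1$, the difference reduces to
\[
(\widetilde W|_{\mathfrak p},\widetilde W|_{\mathfrak p})-(W|_{\mathfrak p},W|_{\mathfrak p})=-2x_1(\lambda-1)^2(\lambda-2)(\lambda+1),
\]
which is strictly positive for $1<\lambda<2$ (equivalently $x_1<x_2<2x_1$), contradicting (\ref{dvect}) for $W$.

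The main obstacle is the first part: the eigenvalue/Pfaffian bookkeeping and the verification that all three symmetric functions coincide rely on the nontrivial algebraic identities satisfied by the hand-picked radicands in (\ref{vect1}) and (\ref{vect2}); one should also check that these radicands are nonnegative on $(1,2)$ so that $W$ and $\widetilde W$ are genuine real matrices. Once the orbit identity $\widetilde W\in O(W)$ is established, the norm comparison is the short computation displayed above.
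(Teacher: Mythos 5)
Your proposal is correct and follows essentially the same route as the paper: both compare $W$ with the specific vector $\widetilde W$ of (\ref{vect2}), use Lemma \ref{vspom3} to conclude $\widetilde W\in O(W)$ from equality of characteristic polynomials, and then derive a contradiction with (\ref{dvect}) from the strict norm inequality, whose difference the paper likewise factors as $2(2-\lambda)(\lambda^2-1)(\lambda-1)>0$. Your block-decomposition and Pfaffian bookkeeping is merely a tidier organization of the paper's ``direct calculations'' of $P(z)$ and $\widetilde P(z)$; the three symmetric functions you match are exactly the coefficients appearing in the paper's system (\ref{sys}).
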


\begin{proof}
Direct calculations show that the characteristic polynomials $P(z)$ and $\widetilde{P}(z)$
of the matrices $W$ and $\widetilde{W}$ (see (\ref{vect2})) are the following:
$$
P(z)=z^7+
(a+b(\lambda^2+(2-\lambda)^2))z^5+
(ab(2-\lambda)^2+ac\lambda^2+b^2\lambda^2(2-\lambda)^2)z^3
+abc\lambda^2(2-\lambda)^2z,
$$
$$
\widetilde{P}(z)=z^7+
(\widetilde{a}+\widetilde{b}(\lambda^2+(2-\lambda)^2))z^5+
(\widetilde{a}\widetilde{b}(2-\lambda)^2+\widetilde{a}\widetilde{c}\lambda^2+\widetilde{b}^2\lambda^2(2-\lambda)^2)z^3
+\widetilde{a}\widetilde{b}\widetilde{c}\lambda^2(2-\lambda)^2z,
$$
where
$$
\lambda= \frac{x_2}{x_1},\, a=s_1^2,\, b=q^2+r^2,\, c=r^2,\,
\widetilde{a}=\widetilde{s}_1^2, \, \widetilde{b}=\widetilde{q}^2+\widetilde{r}^2,\, \widetilde{c}=\widetilde{r}^2.
$$

Now, we shall show that
$P(z)=\widetilde{P}(z)$
and
$(W|_\mathfrak{p},W|_\mathfrak{p})<(\widetilde{W}|_\mathfrak{p},\widetilde{W}|_\mathfrak{p})$.
Since $x_1<x_2<2x_1$, then $1<\lambda<2$.
It is easy to check that
$$
b=1, \quad a=\lambda^2((2-\lambda)^2+\lambda^3-1), \quad
c=\frac{\lambda^3(2-\lambda)^2}{(2-\lambda)^2+(\lambda^3-1)},
$$
$$
\widetilde{b}=\lambda^2,  \quad\widetilde{a}=(2-\lambda)^2+\lambda^4(\lambda-1), \quad
\widetilde{c}=\frac{\lambda^3(2-\lambda)^2}{(2-\lambda)^2+\lambda^4(\lambda-1)}.
$$
The equality $P(z)=\widetilde{P}(z)$ is equivalent to the
following system of equations:

\begin{equation}\label{sys}
\left\{\begin{array}{l}
    a+b(\lambda^2+(2-\lambda)^2)=\widetilde{a}+\widetilde{b}(\lambda^2+(2-\lambda)^2), \\
    ab(2-\lambda)^2+ac\lambda^2+b^2\lambda^2(2-\lambda)^2=
    \widetilde{a}\widetilde{b}(2-\lambda)^2+\widetilde{a}\widetilde{c}\lambda^2+\widetilde{b}^2\lambda^2(2-\lambda)^2, \\
    abc\lambda^2(2-\lambda)^2=\widetilde{a}\widetilde{b}\widetilde{c}\lambda^2(2-\lambda)^2. \\
  \end{array}\right.
\end{equation}

It is easy to verify, that system (\ref{sys}) is fulfilled for the considered
$a,b,c,\widetilde{a},\widetilde{b},\widetilde{c}$.
Therefore, $P(z)=\widetilde{P}(z)$.

Since $(W|_\mathfrak{p},W|_\mathfrak{p})=x_1(a+2\lambda b)$ and
$(\widetilde{W}|_\mathfrak{p},\widetilde{W}|_\mathfrak{p})=x_1(\widetilde{a}+2\lambda\widetilde{b})$,
then the inequality
$(W|_\mathfrak{p},W|_\mathfrak{p})<(\widetilde{W}|_\mathfrak{p},\widetilde{W}|_\mathfrak{p})$
is equivalent to the following one:
$a+2\lambda b<\widetilde{a}+2\lambda\widetilde{b}$. It is easy to see, that
$$
\widetilde{a}+2\lambda\widetilde{b}-a-2\lambda b=
(2-\lambda)^2+\lambda^4(\lambda-1)+2\lambda^3-\lambda^2((2-\lambda)^2+\lambda^3-1)-2\lambda=$$
$$
2(2-\lambda)(\lambda^2-1)(\lambda-1)>0.
$$
Therefore, $(W|_\mathfrak{p},W|_\mathfrak{p})<(\widetilde{W}|_\mathfrak{p},\widetilde{W}|_\mathfrak{p})$.

Since $P(z)=\widetilde{P}(z)$, then by Lemma \ref{vspom3} we get $\widetilde{W} \in O(W)$.
On the other hand, $(W|_\mathfrak{p},W|_\mathfrak{p})<(\widetilde{W}|_\mathfrak{p},\widetilde{W}|_\mathfrak{p})$.
Consequently, the vector $W$ is not a $\delta$-vector, because otherwise the inequality
$$
(W|_{\mathfrak{p}},W|_{\mathfrak{p}}) \geq (\widetilde{W}|_{\mathfrak{p}},\widetilde{W}|_{\mathfrak{p}})
$$
must hold (see the formula (\ref{dvect}) for $\delta$-vectors above).
The lemma is proved.
\end{proof}

Now, it suffices to note that {\bf the proof of Proposition \ref{osn}} follows from
Lemma \ref{vspom2} and Lemma~\ref{vspom4}.

For $1\leq m < l$, we define
the embedding $\sigma_{m,l}: SO(2m+1)\times SO(2k) \mapsto SO(2l+1)$, where $k=l-m$.
This embedding is completely determined by the embedding $d\sigma_{m,l}: so(2m+1)\oplus so(2k)\mapsto so(2l+1)$
for the corresponding Lie algebras. Note that
$so(2m+1)$ consists of matrices of the following type
$$
Q_1=\left(
\begin{array}{ccc}
V&U&E\\
-U^t&W&F\\
-E^t&-F^t&0\\
\end{array}
\right),
$$
where $V$ and $W$ are skew-symmetric $(m\times m)$-matrices, $U$ is an arbitrary $(m\times m)$-matrix,
$E$ and $F$ are arbitrary $(m\times 1)$-matrices.
The Lie algebra $so(k)$ consists of matrices of the following form
$$
Q_2=\left(
\begin{array}{ccc}
A&B\\
-B^t&C\\
\end{array}
\right),
$$
where $A$ and $C$ are skew-symmetric $(k\times k)$-matrices and $B$ is an arbitrary $(k\times k)$-matrix,
Now we define $d\sigma_{m,l}$ as follows

$$
d\sigma_{m,l}((Q_1,Q_2))=\left(
\begin{array}{ccccc}
V&O&U&O&E\\
O&A&O&B&O\\
-U^t&O&W&O&F\\
O&-B^t&O&C&O\\
-E^t&O&-F^t&O&0\\
\end{array}
\right),
$$
where $O$'s denote zero matrices.

Note, that for the considered embeddings we have
$$
\sigma_{m,l}\Bigl(\tau'_m(U(m))\times \tau_k(U(k))\Bigr)\subset \tau'_l(U(l)),
$$
$$
\sigma_{m,l}\Bigl(\tau'_m(U(m))\times \Id \Bigr)=\sigma_{m,l}\Bigl(SO(2m+1)\times \Id \Bigr)\cap \tau'_l(U(l)).
$$

Now we suppose that $l\geq 3$ and $1<m <l$. Let us consider $G=SO(2l+1)$, $H=\tau'_l(U(l))$,
$\widetilde{G}=\sigma_{m,l}\Bigl(SO(2m+1)\times \Id \Bigr)$, and
$\widetilde{H}=\sigma_{m,l}\Bigl(\tau'_m(U(m))\times \Id \Bigr)$.

It is clear that
\begin{equation}
\label{emb}
\widetilde{G} \subset G,\quad \widetilde{H}=\widetilde{G}\cap H;\quad \widetilde{G}\cap SO(2l)=\sigma_{m,l}(SO(2m)),
\end{equation}

\begin{equation}
\label{emb1}
d\sigma_{m,l}(so(2m)^\perp)=d\sigma_{m,l}(so(2m+1))\cap (so(2l))^\perp,
\end{equation}
and
\begin{equation}
\label{emb2}
d\sigma_{m,l}(d\tau_m(u(m))^\perp)=d\sigma_{m,l}(so(2m+1))\cap (d\tau_l(u(l)))^\perp.
\end{equation}

\begin{lemma}\label{lem0}
The orbit of the group $\widetilde{G}$ through the point
$\bar{e}=eH$ in $(G/H,\mu={\mu}_{x_1,x_2})$, that is $\widetilde{G}/\widetilde{H},$
supplied with the induced Riemannian metric $\eta$, is a totally geodesic submanifold of $(G/H,\mu={\mu}_{x_1,x_2})$.
Moreover, the map $(SO(2m+1)/U(m),{\mu}_{x_1,x_2})\rightarrow (\widetilde{G}/\widetilde{H},\eta)$ is an isometry.
\end{lemma}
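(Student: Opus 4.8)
The plan is to prove the two assertions separately: first that $\widetilde{G}/\widetilde{H}$ is totally geodesic in $(G/H,\mu_{x_1,x_2})$, which I would obtain from Proposition~\ref{tor}, and then that the natural identification with $SO(2m+1)/U(m)$ carries $\eta$ to $\mu_{x_1,x_2}$ with the \emph{same} parameters $x_1,x_2$, which I would extract from the embedding relations (\ref{emb}), (\ref{emb1}), (\ref{emb2}) together with the fact that $d\sigma_{m,l}$ preserves the form $\langle\cdot,\cdot\rangle$.

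For the totally geodesic claim, I would produce a torus $T\subset H$ whose centralizer orbit through $\bar e$ is exactly $\widetilde G(\bar e)$. Put $k=l-m$ and let $T_k\subset\tau_k(U(k))$ be a maximal torus; set $T=\sigma_{m,l}(\Id\times T_k)$. By the inclusion $\sigma_{m,l}(\tau'_m(U(m))\times\tau_k(U(k)))\subset\tau'_l(U(l))$ we have $T\subset H$, so $T$ is indeed a torus in $H$. Since $T_k$ is a maximal torus of $\tau_k(U(k))$, it is at the same time a maximal torus of the complementary block $SO(2k)$, hence self-centralizing there, while $T$ acts trivially on the $SO(2m+1)$ block; therefore $C_G(T)=\sigma_{m,l}(SO(2m+1)\times T_k)$. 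Because the extra factor $\sigma_{m,l}(\Id\times T_k)\subset H$ fixes $\bar e$, the orbit $C_G(T)(\bar e)$ coincides with $\widetilde G(\bar e)=\widetilde G/(\widetilde G\cap H)=\widetilde G/\widetilde H$ (using $\widetilde H=\widetilde G\cap H$ from (\ref{emb})), and Proposition~\ref{tor} then makes it totally geodesic in $(G/H,\mu_{x_1,x_2})$.

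For the isometry claim, write the reductive decompositions $so(2m+1)=u(m)\oplus\mathfrak{q}_1\oplus\mathfrak{q}_2$ and $so(2l+1)=u(l)\oplus\mathfrak{p}_1\oplus\mathfrak{p}_2$, where $\mathfrak{q}_1=(so(2m))^\perp$ and $\mathfrak{q}_2=(d\tau_m(u(m)))^\perp\cap so(2m)$, and similarly at the $l$-level. The tangent space of $\widetilde G/\widetilde H$ at $\bar e$ is $d\sigma_{m,l}(\mathfrak{q}_1\oplus\mathfrak{q}_2)$, and relations (\ref{emb1}) and (\ref{emb2}) say precisely that $d\sigma_{m,l}(\mathfrak{q}_1)\subset\mathfrak{p}_1$ and $d\sigma_{m,l}(\mathfrak{q}_1\oplus\mathfrak{q}_2)\subset\mathfrak{p}_1\oplus\mathfrak{p}_2$; combined with orthogonality this forces $d\sigma_{m,l}(\mathfrak{q}_2)\subset\mathfrak{p}_2$. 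Moreover $d\sigma_{m,l}$ merely relocates matrix entries and pads with zeros, so it preserves $\langle A,A\rangle=-\tfrac12\trace(A^2)=\tfrac12\sum_{i,j}A_{ij}^2$ with the same normalization on both Lie algebras; hence it is a linear isometry onto its image for $\langle\cdot,\cdot\rangle$ that respects the splitting into $\mathfrak{p}_1$- and $\mathfrak{p}_2$-parts. Since $\mu_{x_1,x_2}$ is obtained on each level by rescaling $\langle\cdot,\cdot\rangle$ by $x_1$ on the $\mathfrak{p}_1$-part and by $x_2$ on the $\mathfrak{p}_2$-part (see (\ref{par})), the differential $d\sigma_{m,l}$ identifies the tangent inner product of $(SO(2m+1)/U(m),\mu_{x_1,x_2})$ at its base point with the restriction of $\mu_{x_1,x_2}$ on $G/H$ to the orbit, that is, with $\eta$. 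Equivariance under the two group actions then upgrades this to a global isometry $(SO(2m+1)/U(m),\mu_{x_1,x_2})\to(\widetilde G/\widetilde H,\eta)$.

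I expect the main obstacle to be bookkeeping rather than conceptual: on one hand pinning down $C_G(T)$ exactly, so that the orbit is $\widetilde G/\widetilde H$ and nothing larger; on the other hand checking that $d\sigma_{m,l}$ sends $\mathfrak{q}_1,\mathfrak{q}_2$ into $\mathfrak{p}_1,\mathfrak{p}_2$ \emph{with no cross terms and with identical scalings}, so that the parameters $x_1,x_2$ are genuinely preserved and not rescaled. Both points are, however, exactly what relations (\ref{emb})--(\ref{emb2}) are designed to supply, so the argument reduces to verifying those inclusions and the trace-form computation.
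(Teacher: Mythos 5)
Your proposal is correct and follows essentially the same route as the paper: the same torus $T=\sigma_{m,l}(\Id\times T_k)$, the same identification of its centralizer with $T\cdot\widetilde{G}$ (equivalently $\sigma_{m,l}(SO(2m+1)\times T_k)$), the same appeal to Proposition \ref{tor} together with $T\subset H$ to identify the orbit with $\widetilde{G}/\widetilde{H}$, and the same use of (\ref{emb})--(\ref{emb2}) for the isometry claim, on which the paper is in fact terser than you are. One micro-repair: orthogonality to $d\sigma_{m,l}(\mathfrak{q}_1)$, a \emph{proper} subspace of $\mathfrak{p}_1$, does not by itself force $d\sigma_{m,l}(\mathfrak{q}_2)\subset\mathfrak{p}_2$; instead use the third relation of (\ref{emb}) at the Lie algebra level, which gives $d\sigma_{m,l}(\mathfrak{q}_2)\subset d\sigma_{m,l}(so(2m))\subset so(2l)$, and then $d\sigma_{m,l}(\mathfrak{q}_2)\subset so(2l)\cap\mathfrak{p}=\mathfrak{p}_2$ since $so(2l)=u(l)\oplus\mathfrak{p}_2$.
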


\begin{proof}
For this goal let us consider $T$, a maximal ($k$-dimensional) torus in \linebreak
$\sigma_{m,l}\Bigl(\Id\times \tau_k(U(k))\Bigr)$. Note, that $T$ is also a maximal torus in
$\sigma_{m,l}\Bigl(\Id\times SO(2k)\Bigr)$ and $T\subset H$. Let $C$ be the centralizer of $T$ in $SO(2l+1)$.
It is easy to see that $C=T\cdot \widetilde{G}$.
By Proposition \ref{tor}, the orbit of $C$ through the point $eH\in G/H$ is a totally geodesic submanifold of
$(G/H,\mu={\mu}_{x_1,x_2}))$ with the induced Riemannian
metric $\eta$. But $T\subset H$ and, consequently, this orbit coincides with the space
$\widetilde{G}/\widetilde{H}$. The inclusions (\ref{emb}), (\ref{emb1}), and (\ref{emb2})
imply that the map $(SO(2m+1)/U(m),{\mu}_{x_1,x_2})\rightarrow (\widetilde{G}/\widetilde{H},\eta)$ is an isometry.
\end{proof}

\begin{proof}[Proof of Theorem \ref{osn1}]
The case $l=3$ has been concidered in Proposition \ref{osn}. Let us suppose that $l\geq 4$
and $(SO(2l+1)/U(l),\mu={\mu}_{x_1,x_2})$, where $x_1<x_2<2x_1$, is $\delta$-homogeneous.
Then by Lemma \ref{lem0}, $(SO(7)/U(3),\mu={\mu}_{x_1,x_2})$ is a totally geodesic submanifold of
the $\delta$-homogeneous manifold $(SO(2l+1)/U(l),\mu={\mu}_{x_1,x_2})$,
and by Theorem \ref{tot}, it must be $\delta$-homogeneous itself. We get a contradiction
with Proposition \ref{osn}.
\end{proof}

\section{The spaces $Sp(l)/U(1)\cdot  Sp(l-1)$, $l\geq 2$}\label{SP}

Let us consider a Lie subalgebra $\widetilde{\mathfrak{g}}$ in $\mathfrak{g}=sp(l)$ of the
form
$$
\widetilde{\mathfrak{g}}=\{ \diag(A,0)\in sp(l)\,|\, A\in sp(2),\,0 \in sp(l-2) \}.
$$
Let $\widetilde{G}=Sp(2)$ be a connected (closed) subgroup of $G=Sp(l)$
corresponding to $\widetilde{\mathfrak{g}}$, and $\widetilde{H}=\widetilde{G}\cap H$.

\begin{lemma}\label{lem1}
The orbit of the group $\widetilde{G}$ through the point
$\bar{e}=eH$ in $(G/H,\mu={\mu}_{x_1,x_2})$, that is $\widetilde{G}/\widetilde{H}$, is totally geodesic submanifold.
\end{lemma}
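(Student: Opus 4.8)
The plan is to follow the proof of Lemma \ref{lem0}: I would exhibit a torus $T\subset H$ whose centralizer $C(T)$ in $G=Sp(l)$ produces, through the point $\bar e=eH$, exactly the orbit $\widetilde{G}/\widetilde{H}$, and then invoke Proposition \ref{tor}. First I would pin down $\widetilde{H}=\widetilde{G}\cap H$. Since $\widetilde{G}=Sp(2)$ is embedded as the block $\diag(B,I_{l-2})$ with $B\in Sp(2)$ acting on the first two quaternionic coordinates and fixing the rest, an element of $\widetilde{G}$ that also lies in $H=U(1)\cdot Sp(l-1)$ must be block diagonal of the form $\diag(u,m,I_{l-2})$ with $u\in U(1)$ and $m\in Sp(1)$. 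Hence $\widetilde{H}=U(1)\cdot Sp(1)$ and $\widetilde{G}/\widetilde{H}=Sp(2)/U(1)\cdot Sp(1)$, the base member of the family, which is what makes this reduction useful.

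Next I would take $T$ to be a maximal torus of the block $Sp(l-2)$ acting on the quaternionic coordinates $3,\dots,l$, i.e. the diagonal matrices $t=\diag(1,1,e^{{\bf i}\theta_3},\dots,e^{{\bf i}\theta_l})$. Because this $Sp(l-2)$ lies in the $Sp(l-1)$ acting on coordinates $2,\dots,l$, we have $T\subset Sp(l-1)\subset H$. The key step is to compute $C(T)$, the centralizer of $T$ in $Sp(l)$. For a quaternionic matrix $g=(g_{ab})\in Sp(l)$, the condition $gt=tg$ for all $t\in T$ reads entrywise as $g_{ab}t_b=t_a g_{ab}$. Choosing the angles $\theta_3,\dots,\theta_l$ generic (distinct, nonzero, and with $\theta_a\neq-\theta_b$), I would write each entry as $g_{ab}=z+{\bf j}w$ with $z,w$ in the complex line $\mathrm{span}(1,{\bf i})$ and use the relation ${\bf j}\,e^{{\bf i}\theta}=e^{-{\bf i}\theta}\,{\bf j}$. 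Since $\mathbb{H}$ is a division ring, this forces the entries coupling $\{1,2\}$ with $\{3,\dots,l\}$ to vanish, the off-diagonal entries among $\{3,\dots,l\}$ to vanish, and each diagonal entry $g_{aa}$ with $a\geq 3$ to commute with the non-real unit quaternion $e^{{\bf i}\theta_a}$, hence to lie in $U(1)$. Consequently $C(T)=Sp(2)\cdot T=\widetilde{G}\cdot T$.

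Finally, by Proposition \ref{tor} the orbit $C(T)(\bar e)$ is a totally geodesic submanifold of $(G/H,\mu_{x_1,x_2})$. Since $T\subset H$ fixes $\bar e$, we get $C(T)(\bar e)=\widetilde{G}\cdot T(\bar e)=\widetilde{G}(\bar e)=\widetilde{G}/\widetilde{H}$, which is therefore totally geodesic, as claimed. The main obstacle I anticipate is the centralizer computation: unlike the orthogonal case of Lemma \ref{lem0}, the non-commutativity of $\mathbb{H}$ requires the careful bookkeeping above (in particular the relation ${\bf j}\,e^{{\bf i}\theta}=e^{-{\bf i}\theta}\,{\bf j}$ that separately handles the ${\bf j}$-component), and one must check that the generic choice of angles really eliminates every unwanted entry so that $C(T)$ collapses exactly to $\widetilde{G}\cdot T$.
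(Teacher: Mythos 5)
Your proof is correct and follows essentially the same route as the paper: the same torus $T=\diag(1,1,S_1,\dots,S_{l-2})\subset H$ (the paper takes the full torus of circle subgroups in the last $l-2$ quaternionic coordinates), whose centralizer has $\widetilde{G}\cdot T$ as its identity component, combined with Proposition \ref{tor} and the observation that $T\subset H$ fixes $\bar{e}$, so the orbit is exactly $\widetilde{G}/\widetilde{H}$. The only difference is that you carry out explicitly the quaternionic centralizer computation that the paper dismisses as ``easy to see.''
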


\begin{proof}
Let us consider a torus
$T=\diag(1,1,S_1,\dots, S_{l-2})\subset Sp(l)$, where $S_i$ is a circle subgroup.
It is easy to see that $T\subset H$ and
$\widetilde{G} \times T$ is a connected component (of the unit)
of the centralizer of $T$. It follows from Proposition \ref{tor}, that
the orbit of this subgroup trough the point $eH$ is a totally geodesic submanifold
in $(G/H,\mu)$. But this orbit coincides with $\widetilde{G}/\widetilde{H}$.
\end{proof}

It is clear that $\widetilde{H}=U(1)\times Sp(1)$,
where $U(1)\times Sp(1)\subset Sp(1)\times Sp(1) \subset Sp(2)=\widetilde{G}$.
Therefore we have the following $\langle \cdot , \cdot \rangle$-orthogonal
decomposition for corresponded Lie algebras:
\begin{equation}\label{small}
\widetilde{\mathfrak{g}}=sp(2)=\widetilde{\mathfrak{h}}\oplus \mathfrak{p}_2 \oplus
\mathfrak{p}_1^{\prime},
\end{equation}
where $\mathfrak{p}_1^{\prime}=\mathfrak{p}_1\cap \widetilde{\mathfrak{g}}$.

\begin{lemma}\label{lem2}
Let $X\in \mathfrak{p}_1^{\prime}$, $Y\in \mathfrak{p}_2$ be some nontrivial vectors.
Then for any $Z\in \mathfrak{p}$ there is
$a \in H$ such that $\Ad(a)(Z)=cX+dY$ for some $c,d \geq 0$.
\end{lemma}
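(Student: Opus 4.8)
The plan is to exploit the product structure $H=U(1)\cdot Sp(l-1)$, whose two commuting factors act on $\mathfrak{p}=\mathfrak{p}_1\oplus\mathfrak{p}_2$ in complementary ways, and to normalize the two components of $Z$ in the correct order. Under the matrix description of $sp(l)$, the module $\mathfrak{p}_1$ is the space of first-row off-diagonal quaternionic entries, so $\mathfrak{p}_1\cong\mathbb{H}^{l-1}$, while $\mathfrak{p}_2$ is the two-dimensional $\mathbf{j},\mathbf{k}$-part of the diagonal $(1,1)$-entry, and $\mathfrak{p}_1'=\mathfrak{p}_1\cap\widetilde{\mathfrak{g}}$ is the quaternionic slot $a_{12}$. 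The subgroup $Sp(l-1)$, embedded as $\diag(1,B)$, acts on $\mathfrak{p}_1$ by its standard representation on $\mathbb{H}^{l-1}$, hence transitively on every $\langle\cdot,\cdot\rangle$-sphere of $\mathfrak{p}_1$ (recall $Sp(n)/Sp(n-1)=S^{4n-1}$); moreover $\Ad(\diag(1,B))$ fixes $\mathfrak{p}_2$ pointwise, since $\mathfrak{p}_2$ lies in the $(1,1)$-entry. The subgroup $U(1)=\{\diag(e^{\mathbf{i}\theta},I)\}$ acts on $\mathfrak{p}_2$ as a nontrivial one-parameter group of orthogonal transformations, so it surjects onto $SO(2)$ and is transitive on every circle of $\mathfrak{p}_2$.

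With these facts I would argue as follows. Write $Z=Z_1+Z_2$ with $Z_1\in\mathfrak{p}_1$ and $Z_2\in\mathfrak{p}_2$, and set $d=\|Z_2\|/\|Y\|$ and $c=\|Z_1\|/\|X\|$ (norms with respect to $\langle\cdot,\cdot\rangle$; these are well defined because $X,Y$ are nontrivial). First choose $e^{\mathbf{i}\theta}\in U(1)$ with $\Ad(e^{\mathbf{i}\theta})(Z_2)=dY$, which exists by transitivity on circles of $\mathfrak{p}_2$. Put $Z_1'=\Ad(e^{\mathbf{i}\theta})(Z_1)\in\mathfrak{p}_1$; since $\Ad(e^{\mathbf{i}\theta})$ is orthogonal, $\|Z_1'\|=\|Z_1\|$. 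Next choose $B\in Sp(l-1)$ with $\Ad(\diag(1,B))(Z_1')=cX$: this is possible because $cX$ is the point of the ray $\mathbb{R}_{\geq 0}X\subset\mathfrak{p}_1'\subset\mathfrak{p}_1$ lying on the sphere of radius $\|Z_1'\|=\|Z_1\|$, and $Sp(l-1)$ is transitive on that sphere. Since $\diag(1,B)$ fixes $\mathfrak{p}_2$ pointwise, the part $dY$ is unchanged, so for $a=\diag(e^{\mathbf{i}\theta},B)\in H$ (whose adjoint first performs the $U(1)$-rotation and then the $Sp(l-1)$-rotation) we obtain $\Ad(a)(Z)=cX+dY$ with $c,d\geq 0$, as required.

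The one point that requires care, and is the real content of the statement, is the order of the two operations. The factor $U(1)$ acts nontrivially on $\mathfrak{p}_1'$ as well (by left multiplication by $e^{\mathbf{i}\theta}$ on the quaternion in the slot $a_{12}$), so if one first used $Sp(l-1)$ to send $Z_1$ to $cX$ and only afterwards tried to correct the $\mathfrak{p}_2$-part by $U(1)$, this second move would spoil the already achieved $\mathfrak{p}_1'$-normalization. Performing the $U(1)$-rotation first and the $Sp(l-1)$-rotation second circumvents this precisely because $Sp(l-1)$ leaves $\mathfrak{p}_2$ fixed. The argument also covers the degenerate cases $Z_1=0$ or $Z_2=0$ through the values $c=0$ or $d=0$.
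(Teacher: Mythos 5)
Your proof is correct and follows essentially the same route as the paper's: decompose $Z=Z_1+Z_2$, first use the $U(1)$-rotation on $\mathfrak{p}_2$ to arrange $Z_2\mapsto dY$, then use transitivity of $Sp(l-1)$ on spheres of $\mathfrak{p}_1\cong\mathbb{H}^{l-1}$ (an action fixing $\mathfrak{p}_2$ pointwise) to send the rotated $Z_1$ to $cX$, and compose. The only cosmetic differences are that you justify sphere-transitivity directly via $Sp(n)/Sp(n-1)=S^{4n-1}$ where the paper invokes two-point homogeneity of $\mathbb{H}P^{l-1}$ before noting that $Sp(l-1)$ alone suffices, and that you make explicit the pointwise fixing of $\mathfrak{p}_2$ by $Sp(l-1)$, which the paper leaves implicit.
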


\begin{proof}
Let $Z=Z_1+Z_2$, where $Z_1\in \mathfrak{p}_1$, $Z_2 \in \mathfrak{p}_2$.
Recall that $U(1)$ acts on $\mathfrak{p}_2$ and $\mathfrak{p}_1$ by rotations.
Hence we can find $a_1\in U(1)$ such that $\Ad(a_1)(Z_2)=dY$ for some nonnegative $d$.
Further, recall
that $\mathbb{H}P^{l-1}=Sp(l)/Sp(1)\times Sp(l-1)$ is two-point homogeneous.
Therefore, there is $a_2\in Sp(1)\times Sp(l-1)$ such that
$\Ad(a_2)(Z_1^{\prime})=c X$ for some $c\geq 0$, where $Z_1^{\prime}=\Ad(a_2)(Z_1)$.
Moreover, such $a_2$ can be chosen from $Sp(l-1)$, since already $Sp(l-1)$
acts transitively on the unit sphere in $\mathbb{H}P^{l-1}$ (see e.g. \cite{Zi96}).
Therefore, one can choose $a=a_2\cdot a_1$.
\end{proof}

We write $E_{ij}$ for the skew-symmetric matrix with $1$ in the $ij$-th entry and
$-1$ in the $ji$-th entry, and zeros elsewhere.
We denote by $F_{ij}$ the symmetric matrix with $1$ in both
the $ij$-th and $ji$-th entries, and zeros elsewhere.
Denote also by $G_i$ the matrix with $\sqrt{2}$
in $ii$-th entry, and zeros elsewhere.

It is easy to check that the matrices of the forms
${\bf i}G_i$, ${\bf j}G_i$, ${\bf k}G_i$, $E_{ij}$, ${\bf i}F_{ij}$,
${\bf j}F_{ij}$, ${\bf k}F_{ij}$, where $1\leq i, j \leq n$ and $i<j$,
form a $\langle \cdot, \cdot \rangle$-orthonormal (see (\ref{product}))
basis in $sp(l)$.

Without loss of generality we may suppose that the Lie subalgebra $u(1)$
($\mathfrak{h}=u(1)\oplus sp(l-1)$)
is spanned on the vector ${\bf i}G_1$.
It is clear that $E_{12}\in \mathfrak{p}_1^{\prime}$ and ${\bf j}G_1 \in \mathfrak{p}_2$.

\begin{lemma}\label{lem3}
Let $W=X+Y+Z$ be a $\delta$-vector on $\widetilde{G}/\widetilde{H}$ with a
metric induced by $\mu$, where $X=cE_{12}$ and $Y=d{\bf j}G_1$ for some non-negative
$c$ and $d$. Then the following relations hold:

1) If $c=0$, then $Z=\beta {\bf i}G_2+\gamma {\bf j}G_2+\delta {\bf k}G_2$,
$\beta,\gamma, \delta \in \mathbb{R}$;

2) If $d=0$, then $Z=\alpha ({\bf i} G_1 +{\bf i} G_2)$,
$\alpha \in \mathbb{R}$;

3) If $c\neq 0$ and $d\neq 0$, then
$Z=-\frac{x_2-x_1}{x_1}d{\bf j} G_2$.
\end{lemma}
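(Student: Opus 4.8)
The plan is to exploit the fact, recorded in the introduction, that every $\delta$-vector is in particular a geodesic (g.o.) vector. By Lemma~\ref{lem1} the orbit $\widetilde{G}/\widetilde{H}=Sp(2)/U(1)\cdot Sp(1)$ is totally geodesic, and under the decomposition (\ref{small}) its induced metric is again of the type $\mu_{x_1,x_2}$, with $\mathfrak{p}_1^{\prime}$ in the role of $\mathfrak{p}_1$ and $\mathfrak{p}_2$ unchanged (indeed $\widetilde{\mathfrak{h}}\oplus\mathfrak{p}_2=sp(1)\oplus sp(1)$ is the algebra of $Sp(1)\times Sp(1)$, and $[\mathfrak{p}_2,\mathfrak{p}_1^{\prime}]\subset\mathfrak{p}_1^{\prime}$). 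Thus I would apply Proposition~\ref{tam} to $W=X+Y+Z$ (with $X\in\mathfrak{p}_1^{\prime}$, $Y\in\mathfrak{p}_2$, $Z\in\widetilde{\mathfrak{h}}$), which for $x_1\neq x_2$ gives the two bracket relations $[Z,Y]=0$ and $[X,Y]=\tfrac{x_1}{x_2-x_1}[X,Z]$. The whole lemma will then follow from these two relations together with a handful of explicit commutator computations; the $\delta$-vector hypothesis enters only to guarantee that $W$ is geodesic.

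First I would fix coordinates on $\widetilde{\mathfrak{h}}=u(1)\oplus sp(1)$: the summand $u(1)$ is spanned by ${\bf i}G_1$ and the summand $sp(1)$ (the lower $(2,2)$ quaternionic block) by ${\bf i}G_2,{\bf j}G_2,{\bf k}G_2$, so I write $Z=\alpha\,{\bf i}G_1+\beta\,{\bf i}G_2+\gamma\,{\bf j}G_2+\delta\,{\bf k}G_2$ with $\alpha,\beta,\gamma,\delta\in\mathbb{R}$. Then I would record the relevant brackets. Since block-diagonal matrices supported in different diagonal blocks commute, the elements ${\bf i}G_2,{\bf j}G_2,{\bf k}G_2$ commute with $Y=d\,{\bf j}G_1$, while $[{\bf i}G_1,{\bf j}G_1]$ is a nonzero multiple of ${\bf k}G_1\in\mathfrak{p}_2$. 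A direct computation likewise shows that $[E_{12},{\bf i}G_1]$ and $[E_{12},{\bf j}G_1]$ are nonzero multiples of ${\bf i}F_{12}$ and ${\bf j}F_{12}$, and that $[E_{12},{\bf i}G_2]$, $[E_{12},{\bf j}G_2]$, $[E_{12},{\bf k}G_2]$ are equal, up to one and the same nonzero scalar, to ${\bf i}F_{12}$, ${\bf j}F_{12}$, ${\bf k}F_{12}$ respectively; crucially, ${\bf i}F_{12},{\bf j}F_{12},{\bf k}F_{12}$ are linearly independent basis vectors, so that comparison of coefficients is legitimate.

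With these in hand the three cases are immediate. When $c=0$ (so $X=0$) the second relation is vacuous and $[Z,Y]=0$ reduces to a nonzero multiple of $\alpha d\,{\bf k}G_1$; since $d\neq 0$ this forces $\alpha=0$, leaving $\beta,\gamma,\delta$ free, which is exactly the first conclusion. When $d=0$ (so $Y=0$) the first relation is trivial and the second becomes $[X,Z]=0$, i.e. $[E_{12},Z]=0$; substituting the commutators above and equating the coefficients of ${\bf i}F_{12},{\bf j}F_{12},{\bf k}F_{12}$ yields $\beta=\alpha$ and $\gamma=\delta=0$, that is $Z=\alpha({\bf i}G_1+{\bf i}G_2)$, the second conclusion. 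Finally, when $c\neq 0$ and $d\neq 0$, the relation $[Z,Y]=0$ again gives $\alpha=0$, and then $[X,Y]=\tfrac{x_1}{x_2-x_1}[X,Z]$, after cancelling the common factor and comparing coefficients of ${\bf i}F_{12},{\bf j}F_{12},{\bf k}F_{12}$, produces $\beta=\delta=0$ and $\gamma=-\tfrac{x_2-x_1}{x_1}d$, i.e. $Z=-\tfrac{x_2-x_1}{x_1}d\,{\bf j}G_2$, the third conclusion.

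The only genuine care needed is in the commutator bookkeeping: tracking which quaternionic unit and which $F$- or $G$-type matrix each bracket produces, checking into which summand of (\ref{small}) it lands, and confirming the linear independence of the outputs ${\bf i}F_{12},{\bf j}F_{12},{\bf k}F_{12}$. This is the step I expect to be most error-prone, though it is entirely routine arithmetic in $sp(2)$; there is no conceptual obstacle, because Proposition~\ref{tam} already supplies precisely the two linear constraints that pin down $Z$ in each case.
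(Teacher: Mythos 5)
Your proof is correct and follows essentially the same route as the paper's: both reduce the $\delta$-vector hypothesis to the geodesic-vector bracket relations of Proposition \ref{tam} applied on the totally geodesic orbit $\widetilde{G}/\widetilde{H}$, expand $Z=\alpha\,{\bf i}G_1+\beta\,{\bf i}G_2+\gamma\,{\bf j}G_2+\delta\,{\bf k}G_2$, and compare coefficients in the resulting commutators. The brackets you describe qualitatively are exactly those the paper computes explicitly, namely $[Z,Y]=2\sqrt{2}\alpha d\,{\bf k}G_1$, $[Z,X]=\sqrt{2}c\left((\alpha-\beta){\bf i}F_{12}-\gamma\,{\bf j}F_{12}-\delta\,{\bf k}F_{12}\right)$, $[Y,X]=\sqrt{2}cd\,{\bf j}F_{12}$, so your three-case analysis coincides with the paper's.
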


\begin{proof}
The vector $W$ is g.o.-vector. According to Proposition \ref{tam}, we have
$[Z,Y]=0$ and $[Z,X]=\frac{x_2-x_1}{x_1}[Y,X]$. Consider an arbitrary
$Z_1=\alpha {\bf i} G_1+\beta {\bf i}G_2+\gamma {\bf j}G_2+\delta {\bf k}G_2 \in
\widetilde{h}$. It is easy to see that
$[Z_1,Y]=2\sqrt{2}\alpha d {\bf k} G_1$,
$[Z_1,X]=\sqrt{2}c((\alpha-\beta) {\bf i} F_{12}-\gamma {\bf j}F_{12}-\delta {\bf k} F_{12})$,
$[Y,X]=\sqrt{2}c d{\bf j}F_{12}$. These formulas imply all statements of Lemma.
\end{proof}

Consider now the vectors $X=cE_{12}$ and $Y=d{\bf j}G_1$ for some positive $c$ and $d$.
It is easy to see that the vector $Z=-\frac{x_2-x_1}{x_1} d{\bf j} G_2$ satisfies
the relations $[Z,Y]=0$, $[Z,X]=\frac{x_2-x_1}{x_1} [Y,X]$.
Indeed, the vector $W=X+Y+Z$ is a $\delta$-vector on the space
$\widetilde{G}/\widetilde{H}$ with a metric induced by $\mu_{x_1,x_2}$, if
$x_1 <x_2 \leq 2x_1$ (see Section 13 in \cite{BerNik}).

Our main technical tool is the following

\begin{prop}\label{osn2}
If for every positive $c$ and $d$ the vector
$$
W=X+Y+Z=cE_{12}+d{\bf j}G_1-\frac{x_2-x_1}{x_1}d{\bf j} G_2
$$
is a $\delta$-vector on $(G/H, \mu)$, then the Riemannian manifolds
$(G/H, \mu)$ is $G$-$\delta$-homogeneous.
\end{prop}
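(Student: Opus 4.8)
The plan is to verify the criterion of Proposition~\ref{delt}: it suffices to show that every $v\in\mathfrak{p}$ admits a $u\in\mathfrak{h}$ with $v+u$ a $\delta$-vector. The hypothesis supplies such a vector only for the very special $v=cE_{12}+d{\bf j}G_1$ lying in the two-parameter ``slice'' $\mathfrak{p}_1^{\prime}\oplus\langle {\bf j}G_1\rangle$, so the whole point is to spread these out over all of $\mathfrak{p}$ using the action of $H$. Accordingly, the first thing I would record is an equivariance principle: the set of $\delta$-vectors is invariant under $\Ad(h)$ for every $h\in H$.

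To prove this I would use that $\Ad(h)$ preserves each summand of the decomposition $\mathfrak{g}=\mathfrak{h}\oplus\mathfrak{p}_1\oplus\mathfrak{p}_2$ (the modules $\mathfrak{p}_1,\mathfrak{p}_2$ being $\Ad(H)$-invariant) and preserves $\langle\cdot,\cdot\rangle$, hence preserves the inner product $(\cdot,\cdot)$ of~(\ref{par}) on $\mathfrak{p}$ and commutes with the orthogonal projection onto $\mathfrak{p}$. Consequently $(\Ad(h)(W)|_{\mathfrak{p}},\Ad(h)(W)|_{\mathfrak{p}})=(W|_{\mathfrak{p}},W|_{\mathfrak{p}})$, while in the defining inequality~(\ref{dvect}) the substitution $a\mapsto ah$ merely reindexes the family of conditions ranged over by $a\in G$. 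Thus $\Ad(h)(W)$ satisfies~(\ref{dvect}) if and only if $W$ does. I expect this equivariance principle to be the real content of the proof; once it is secured, everything else is bookkeeping.

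With it in hand the argument is short. Given an arbitrary $v\in\mathfrak{p}$, I would apply Lemma~\ref{lem2} with $X=E_{12}\in\mathfrak{p}_1^{\prime}$ and $Y={\bf j}G_1\in\mathfrak{p}_2$ to obtain $a\in H$ with $\Ad(a)(v)=cE_{12}+d{\bf j}G_1$ for some $c,d\geq 0$. For $c,d>0$ the hypothesis gives that $W=cE_{12}+d{\bf j}G_1+Z$, with $Z=-\frac{x_2-x_1}{x_1}d{\bf j}G_2\in\mathfrak{h}$ (note ${\bf j}G_2\in sp(l-1)\subset\mathfrak{h}$), is a $\delta$-vector. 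Writing $W=\Ad(a)(v)+Z$ and applying $\Ad(a^{-1})$ with $a^{-1}\in H$, the equivariance principle yields that $\Ad(a^{-1})(W)=v+\Ad(a^{-1})(Z)$ is again a $\delta$-vector; since $\mathfrak{h}$ is $\Ad(H)$-invariant, $u:=\Ad(a^{-1})(Z)\in\mathfrak{h}$, and $v+u$ is the required $\delta$-vector.

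It then remains to treat the boundary values $cd=0$ produced by Lemma~\ref{lem2}, which the hypothesis does not address directly. Here I would observe that the set of $\delta$-vectors is closed: by~(\ref{dvect}) a vector $W$ is a $\delta$-vector precisely when the continuous function $W\mapsto\max_{a\in G}(\Ad(a)(W)|_{\mathfrak{p}},\Ad(a)(W)|_{\mathfrak{p}})-(W|_{\mathfrak{p}},W|_{\mathfrak{p}})$ (a maximum over the compact group $G$) vanishes. Letting $d\to 0^{+}$, respectively $c\to 0^{+}$, in the family $W$ shows that $cE_{12}$ and $d{\bf j}G_1-\frac{x_2-x_1}{x_1}d{\bf j}G_2$ are $\delta$-vectors as well, so the representatives with $cd=0$ fall under the same scheme (the case $c=d=0$, i.e. $v=0$, being trivial); alternatively one may read these degenerate cases directly off parts~1) and~2) of Lemma~\ref{lem3}. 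Combining the reduction of Lemma~\ref{lem2} with the equivariance principle then produces the desired $u$ for every $v\in\mathfrak{p}$, and Proposition~\ref{delt} gives the $G$-$\delta$-homogeneity of $(G/H,\mu)$.
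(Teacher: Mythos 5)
Your proof is correct and takes essentially the same route as the paper's own (very terse) proof, which likewise combines the criterion of Proposition~\ref{delt}, the $\Ad(H)$-invariance identity $(\Ad(h)(W)|_{\mathfrak{p}},\Ad(h)(W)|_{\mathfrak{p}})=(W|_{\mathfrak{p}},W|_{\mathfrak{p}})$, and the reduction of Lemma~\ref{lem2}. You are in fact more careful than the paper: the degenerate cases $cd=0$ left implicit there are settled cleanly by your closedness/limiting argument (note only that your parenthetical alternative of reading these cases off parts~1) and~2) of Lemma~\ref{lem3} would not suffice, since that lemma gives necessary conditions on $Z$, not the existence of a $\delta$-vector).
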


\begin{proof}
We recall that a $\delta$-vector $W\in \frak{g}$ is characterized by the
equation (\ref{dvect}), and $(G/H,\mu)$
is $\delta$-homogeneous if any vector from $\frak{p}$ can be represented
as $W|_{\frak{p}}$ for some $\delta$-vector $W\in \frak{g}$ (see Proposition \ref{delt}).
Evidently, $(\Ad(h)(W)|_{\frak{p}},\Ad(h)(W)|_{\frak{p}})=(W|_{\frak{p}},W|_{\frak{p}})$ for all
$W\in \frak{g}$ and $h\in H$. This fact, together with Lemma \ref{lem2}, implies the Proposition.
\end{proof}

\begin{prop}\label{osn3}
Suppose that
$$
W=X+Y+Z=cE_{12}+d{\bf j}G_1-\frac{x_2-x_1}{x_1}d{\bf j} G_2
$$
is not a $\delta$-vector on $(G/H, \mu)$.
Then there is a vector $\widetilde{W}$ in the $\Ad(G)$-orbit of $W$, which has
one of the following forms:

1) $\widetilde{W}_1=\widetilde{d} {\bf j}G_1  +\sum\limits_{q=2}^l {\alpha}_q {\bf i} G_q$,
where $x_2\widetilde{d}^2 > x_2 d^2 +x_1 c^2$;

2) $\widetilde{W}_2=\widetilde{c} E_{12}  +
\alpha ({\bf i} G_1 +{\bf i} G_2)+\sum\limits_{q=3}^l {\alpha}_q {\bf i} G_q$,
where $x_1\widetilde{c}^2 > x_2 d^2 +x_1 c^2$;

3) $\widetilde{W}_3=
\widetilde{c}E_{12}+\widetilde{d}{\bf j}G_1-\frac{x_2-x_1}{x_1}\widetilde{d}{\bf j} G_2
+\sum\limits_{q=3}^l {\alpha}_q {\bf i} G_q$,
where $x_2\widetilde{d}^2+x_1\widetilde{c}^2 > x_2 d^2 +x_1 c^2$.

In the formulas above $\alpha, {\alpha}_q \in \mathbb{R}$,
$\widetilde{c},\widetilde{d} \geq 0$.
\end{prop}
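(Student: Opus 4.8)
The plan is to obtain the required $\widetilde{W}$ as a maximizer of the $\mathfrak{p}$-projection norm on the compact orbit $\Ad(G)(W)$, and then to normalize it by combining the isotropy action with the geodesic relations of Proposition \ref{tam}. First I would note that the continuous function $V\mapsto (V|_{\mathfrak{p}},V|_{\mathfrak{p}})$ attains a maximum on the compact set $\Ad(G)(W)$ at some $V_0=\Ad(a)(W)$; by the characterization (\ref{dvect}) this $V_0$ is a $\delta$-vector. Since $W$ is, by hypothesis, not a $\delta$-vector, it cannot also attain this maximum, so the inequality is strict: $(V_0|_{\mathfrak{p}},V_0|_{\mathfrak{p}})>(W|_{\mathfrak{p}},W|_{\mathfrak{p}})$. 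A direct computation gives $W|_{\mathfrak{p}}=cE_{12}+d{\bf j}G_1$ with $cE_{12}\in\mathfrak{p}_1$ and $d{\bf j}G_1\in\mathfrak{p}_2$, so that $(W|_{\mathfrak{p}},W|_{\mathfrak{p}})=x_1c^2+x_2d^2$, the quantity appearing on the right of all three inequalities.

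Next I would reduce the $\mathfrak{p}$-part of $V_0$. Applying Lemma \ref{lem2} with $X=E_{12}$ and $Y={\bf j}G_1$ to $V_0|_{\mathfrak{p}}$, there is $h\in H$ with $\Ad(h)(V_0|_{\mathfrak{p}})=\widetilde{c}E_{12}+\widetilde{d}{\bf j}G_1$, $\widetilde{c},\widetilde{d}\geq 0$. Since $(\cdot,\cdot)$ is $\Ad(H)$-invariant and $\Ad(h)$ preserves the splitting $\mathfrak{g}=\mathfrak{h}\oplus\mathfrak{p}$, the vector $\widetilde{W}:=\Ad(h)(V_0)$ is again a $\delta$-vector in $\Ad(G)(W)$ with the same strictly larger $\mathfrak{p}$-norm and with $\widetilde{W}|_{\mathfrak{p}}=\widetilde{c}E_{12}+\widetilde{d}{\bf j}G_1$. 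Writing $\widetilde{W}=\widetilde{c}E_{12}+\widetilde{d}{\bf j}G_1+Z'$ with $Z'=\alpha\,{\bf i}G_1+Z''\in\mathfrak{h}=u(1)\oplus sp(l-1)$, $Z''\in sp(l-1)$, it remains only to determine $Z'$ and to split into the three cases according to which of $\widetilde{c},\widetilde{d}$ vanish.

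To pin down $Z'$ I would use that $\widetilde{W}$, being a $\delta$-vector, is a geodesic vector, so Proposition \ref{tam} applies with $X=\widetilde{c}E_{12}$, $Y=\widetilde{d}{\bf j}G_1$, $Z=Z'$. Since $Z''$ commutes with ${\bf j}G_1$ by disjoint support while $[\,{\bf i}G_1,{\bf j}G_1]=2\sqrt{2}\,{\bf k}G_1$, the relation $[Z',Y]=0$ forces $\alpha\widetilde{d}=0$. A short computation shows $[E_{12},Z'']$ has nonzero entries only in the first row and column (positions $(1,j)$ and $(j,1)$, $j\geq 2$); matching this, via the second relation of Proposition \ref{tam}, against $[E_{12},{\bf j}G_1]=-\sqrt{2}\,{\bf j}F_{12}$ (respectively $[E_{12},{\bf i}G_1]=-\sqrt{2}\,{\bf i}F_{12}$) determines the $(2,2)$-entry of $Z''$ and kills every entry of $Z''$ coupling index $2$ to indices $\geq 3$, leaving a free block $Z'''\in sp(l-2)$ on indices $3,\dots,l$. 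The key final move is that the residual isotropy $Sp(l-2)\subset Sp(l-1)\subset H$, acting trivially on indices $1,2$, fixes $\widetilde{c}E_{12}$, $\widetilde{d}{\bf j}G_1$ and the determined $(2,2)$-entry while conjugating $Z'''$; as every element of $sp(l-2)$ is $\Ad(Sp(l-2))$-conjugate to a diagonal element $\sum_{q=3}^{l}\alpha_q{\bf i}G_q$ of the maximal torus, one further conjugation puts $\widetilde{W}$ into the stated forms, keeping it in $\Ad(G)(W)$ with unchanged $\mathfrak{p}$-norm. When $\widetilde{c}=0$ I instead diagonalize the whole block $Z''\in sp(l-1)$ by $Sp(l-1)$ (forcing $\alpha=0$ since $\widetilde{d}>0$), yielding form 1); when $\widetilde{d}=0$ the relations produce the $\alpha({\bf i}G_1+{\bf i}G_2)$ term of form 2); when both are positive they produce the $-\frac{x_2-x_1}{x_1}\widetilde{d}\,{\bf j}G_2$ term of form 3).

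Finally, in each case $\widetilde{W}|_{\mathfrak{p}}$ equals $\widetilde{d}{\bf j}G_1$, $\widetilde{c}E_{12}$, or $\widetilde{c}E_{12}+\widetilde{d}{\bf j}G_1$, respectively, since all the terms ${\bf i}G_q$, ${\bf i}G_1$ and ${\bf j}G_2$ lie in $\mathfrak{h}$; hence $(\widetilde{W}|_{\mathfrak{p}},\widetilde{W}|_{\mathfrak{p}})$ equals $x_2\widetilde{d}^2$, $x_1\widetilde{c}^2$, or $x_1\widetilde{c}^2+x_2\widetilde{d}^2$, and each is $>x_2d^2+x_1c^2$ by the strict inequality established at the start. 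I expect the main obstacle to be the bracket computation in $sp(l)$ and, above all, recognizing that the geodesic relations determine $Z'$ only up to a free $sp(l-2)$-block, which must be removed by a second, carefully chosen isotropy conjugation that leaves the already normalized entries untouched.
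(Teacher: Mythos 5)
Your proof is correct, and it reaches the three normal forms by a more self-contained route than the paper's. The shared skeleton is the same: both arguments take $\widetilde{W}$ to be a maximizer of $V\mapsto (V|_{\mathfrak{p}},V|_{\mathfrak{p}})$ on the compact orbit $\Ad(G)(W)$ (hence a $\delta$-vector whose $\mathfrak{p}$-norm strictly exceeds $x_2d^2+x_1c^2$), both normalize its $\mathfrak{p}$-part to $\widetilde{c}E_{12}+\widetilde{d}\,{\bf j}G_1$ via Lemma \ref{lem2} and the $\Ad(H)$-invariance of $(\cdot,\cdot)$, and both finish by conjugating the leftover block in $sp(l-2)$ (resp.\ $sp(l-1)$ when $\widetilde{c}=0$) into the Cartan subalgebra spanned by the ${\bf i}G_q$. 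Where you genuinely diverge is in pinning down the $\mathfrak{h}$-part. The paper decomposes $\widetilde{W}|_{\mathfrak{h}}=Z_1+Z_2+Z_3$ with $Z_1\in\widetilde{\mathfrak{h}}$, $Z_2\in sp(l-2)$, $Z_3$ in the orthogonal complement of $sp(l-2)$ in $sp(l-1)$, and determines $Z_1$ by a structural argument: the totally geodesic orbit $\widetilde{G}/\widetilde{H}=Sp(2)/U(1)\cdot Sp(1)$ of Lemma \ref{lem1}, the Killing-field restriction argument from the proof of Theorem \ref{tot} (to conclude that $\widetilde{X}+\widetilde{Y}+Z_1$ is a $\delta$-vector on the small space), and then Lemma \ref{lem3}; Proposition \ref{tam} is invoked only to force $Z_3=0$ when $\widetilde{X}\neq 0$. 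You instead apply Proposition \ref{tam} once, directly in $sp(l)$, with $X=\widetilde{c}E_{12}$, $Y=\widetilde{d}\,{\bf j}G_1$, $Z=Z'$, and let the explicit brackets do all the work: $[Z',Y]=0$ gives $\alpha\widetilde{d}=0$, while the fact that $[E_{12},Z'']$ lives only in the first row and column matches against $-\sqrt{2}\,{\bf j}F_{12}$ (resp.\ $-\sqrt{2}\,{\bf i}F_{12}$) to fix the $(2,2)$-entry and annihilate the couplings of index $2$ with indices $\geq 3$. This single computation absorbs Lemma \ref{lem3} and renders Lemma \ref{lem1} and the restriction argument unnecessary for this proposition; it also only ever uses the \emph{geodesic}-vector property of $\widetilde{W}$, never its full $\delta$-property, which is exactly the observation that makes the shortcut possible. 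What the paper's route buys in exchange is reuse of general, conceptually transparent machinery (totally geodesic orbits and heredity of $\delta$-vectors under restriction) that it has already established. One small point you should make explicit: in your case $\widetilde{c}=0$ you need $\widetilde{d}>0$ to conclude $\alpha=0$; this follows because $(\widetilde{W}|_{\mathfrak{p}},\widetilde{W}|_{\mathfrak{p}})>x_2d^2+x_1c^2$, and the right-hand side is positive provided $(c,d)\neq(0,0)$ --- automatic in the setting where the proposition is applied (Proposition \ref{main} takes $c,d>0$; and if $c=d=0$ then $W=0$ is trivially a $\delta$-vector, contradicting the hypothesis), but worth a line.
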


\begin{proof}
If $W$ is not a $\delta$-vector, then
$$
M:=\max_{a\in G} (\Ad(a)(W)|_{\mathfrak{p}},\Ad(a)(W)|_{\mathfrak{p}}) >
(W|_{\mathfrak{p}},W_{\mathfrak{p}})=x_2d^2+x_1c^2.
$$
Consider some $\widetilde{W}$ from the $\Ad(G)$-orbit of $W$, which gives the maximal value
$M$ in the above formula, then $\widetilde{W}$ is a $\delta$-vector on $(G/H,\mu)$.
Using Lemma \ref{lem2},
we may assume that
$\widetilde{W}_{\mathfrak{p}}=\widetilde{X}+\widetilde{Y}$, where
$\widetilde{X}=\widetilde{c}E_{12}$ and $\widetilde{Y}=\widetilde{d}{\bf j}G_1$
for same nonnegative
$\widetilde{c}$ and $\widetilde{d}$. Consider now $W_{\mathfrak{h}} =Z_1+Z_2+Z_3$,
where $Z_1 \in \widetilde{\mathfrak{h}}$, $Z_2\in sp(l-2)$, $Z_3\in \mathfrak{p}_3$,
$\mathfrak{p}_3$ is a $\langle \cdot, \cdot \rangle$-compliment to $sp(l-2)$ in $sp(l-1)$,
and $sp(l-1)$ ($sp(l-2)$) is defined by the embedding $X\rightarrow \diag(0,X)$
(respectively, $X\rightarrow \diag(0,0,X)$) to $sp(l)$.

It is well-known that if we interpret any element $U\in \frak{g}$ as a right-invariant vector field on $G$,
then $X=d\pi(U)$, where $\pi: G\rightarrow G/H$ is the natural projection, correctly defines a Killing vector
field on  $(G/H,\mu)$. Under this
$U$ is a $\delta$-vector if and only if $X$ attains the maximal value of its length at the initial point
$eH\in G/H$ \cite{BerNik}. Since $\widetilde{G}/\widetilde{H}$ is totally geodesic submanifold of $(G/H,\mu)$
by Lemma \ref{lem1}, the proof of Theorem \ref{tot} (Theorem 11 in \cite{BerNik}) implies that the tangent to
$\widetilde{G}/\widetilde{H}$ component of
such field $X$ is a Killing vector field on $\widetilde{G}/\widetilde{H}$, which also
attain the maximal value of its length at the initial point $e\widetilde{H} \in \widetilde{G}/\widetilde{H}$.

This consideration implies that $\widetilde{X}+\widetilde{Y}+Z_1$ is a $\delta$-vector
on $\widetilde{G}/\widetilde{H}$. Therefore, we have for $Z_1$ one of the possibilities in Lemma \ref{lem3}.
Besides this, it is easy to see that $[Z_2,\widetilde{X}]=0$, $[Z_3,\widetilde{Y}]=[Z_2,\widetilde{Y}]=0$.
From Proposition \ref{tam} we see that
$$
[Z_1+Z_2+Z_3,\widetilde{X}]=\frac{x_2-x_1}{x_2}[\widetilde{Y},\widetilde{X}]=
[Z_1,\widetilde{X}],
$$
therefore, $[Z_3, \widetilde{X}]=0$. As it is easy to check, this implies $Z_3=0$ if
$\widetilde{X}\neq 0$.

We have the following two possibilities: $\widetilde{c}= 0$ or $\widetilde{c} \neq 0$.

In the first case we have $\widetilde{X}=0$. Since
$\widetilde{W}_{\mathfrak{h}} \in sp(l-1)$ by the case 1) in Lemma \ref{lem3}, and $\widetilde{Y}$
commutes with $sp(l-1)$, we can move $\widetilde{W}_{\mathfrak{h}}$ by some
$\Ad(b)$, $b \in Sp(l-1)$, to a given Cartan subalgebra of $sp(l-1)$, not changing
$\widetilde{Y}$. The vectors ${\bf i}G_q$, $2\leq q \leq l$ generate
such subalgebra. Then we have the item 1) of Lemma.

If $\widetilde{c} \neq 0$, then $\widetilde{X}\neq 0$,  $Z_3=0$ (see above),
$Z_2 \in sp(l-2)$. Since $\widetilde{W}_{\mathfrak{p}}$ commutes with $sp(l-2)$ we can
move $Z_2$ by some $\Ad(b)$, $b \in Sp(l-2)$, to
a given Cartan subalgebra of $sp(l-2)$. The vectors ${\bf i}G_q$, $3\leq q \leq l$ generate such subalgebra.
Thus we get 2) or 3) in Lemma depending on whether $\widetilde{d} =0$ or not.
\end{proof}

Later on we shall need the embedding $\pi:Sp(l)\rightarrow SU(2l)$, which is defined by
$$
A+{\bf j}B \rightarrow
\left(
\begin{array}{rr}
A&-\overline{B}\\
B&\overline{A}\\
\end{array}
\right)
$$
and the corresponding embedding $d\pi :sp(l) \rightarrow su(2l)$, acted by
$$
X+{\bf j}Y \rightarrow
\left(
\begin{array}{rr}
X&-\overline{Y}\\
Y&\overline{X}\\
\end{array}
\right).
$$
It is easy to check the following formulas:
$$
d\pi(E_{ij})=E_{ij}+E_{l+i,l+j},\quad
d\pi({\bf i} F_{ij})= {\bf i} F_{i,j} -{\bf i} F_{l+i,l+j},
$$
$$
d\pi({\bf j} F_{ij})= E_{l+i,j} -E_{i,l+j}, \quad
d\pi({\bf k} F_{ij})= -{\bf i} F_{l+i,j} -{\bf i} F_{i,l+j},
$$
$$
d\pi({\bf i}G_i)={\bf i}G_i-{\bf i}G_{l+i}, \quad
d\pi({\bf j}G_i)=-\sqrt{2}E_{i,l+i}, \quad
d\pi({\bf k}G_i)=-\sqrt{2}{\bf i} F_{i,l+i}.
$$

For any $W\in sp(l)$ we denote by $\Pol(W)$ the characteristic polynomial of the matrix
$d\pi(W)$.
It easy to get the following

\begin{prop}\label{char}
1) If $\widetilde{W}_1=\widetilde{d} {\bf j}G_1  +\sum\limits_{q=2}^l {\alpha}_q {\bf i} G_q$,
then
$$
\Pol(\widetilde{W}_1)=({\lambda}^2+2\widetilde{d}^2)\cdot
\prod\limits_{q=2}^l({\lambda}^2+2{\alpha}_q^2);
$$

2) If $\widetilde{W}_2=\widetilde{c} E_{12}  +
\alpha ({\bf i} G_1 +{\bf i} G_2)+\sum\limits_{q=3}^l {\alpha}_q {\bf i} G_q$,
then
$$
\Pol(\widetilde{W}_2)=\left({\lambda}^4+2(\widetilde{c}^2+2{\alpha}^2){\lambda}^2+
(\widetilde{c}^2-2{\alpha}^2)^2\right)
\cdot
\prod\limits_{q=3}^l({\lambda}^2+2{\alpha}_q^2);
$$

3) If $\widetilde{W}_3=
\widetilde{c}E_{12}+\widetilde{d}{\bf j}G_1-\frac{x_2-x_1}{x_1}\widetilde{d}{\bf j} G_2
+\sum\limits_{q=3}^l {\alpha}_q {\bf i} G_q$,
then
$$
\Pol(\widetilde{W}_3)=\left({\lambda}^4+2\left(\widetilde{c}^2+\widetilde{d}^2+\widetilde{d}^2
\left(\frac{x_2-x_1}{x_1}\right)^2\right){\lambda}^2 +
\left( \widetilde{c}^2+2\widetilde{d}^2\frac{x_2-x_1}{x_1}\right)^2\right)\cdot
\prod\limits_{q=3}^l({\lambda}^2+2{\alpha}_q^2).
$$
\end{prop}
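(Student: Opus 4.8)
The plan is to pull each vector $\widetilde{W}_i$ over to $su(2l)$ via the map $d\pi$, using the displayed formulas for $d\pi$ on the basis elements, and then to read off the characteristic polynomial from the resulting explicit $(2l)\times(2l)$ complex matrix. The decisive structural observation is that in every case $d\pi(\widetilde{W}_i)$ is block diagonal after a suitable grouping of coordinates: the images $d\pi({\bf i}G_q)={\bf i}G_q-{\bf i}G_{l+q}$ of the spectator summands $\alpha_q\,{\bf i}G_q$ touch only the two diagonal positions $q$ and $l+q$, while $d\pi(E_{12})$, $d\pi({\bf j}G_1)$ and $d\pi({\bf j}G_2)$ couple solely the indices lying in $\{1,2,l+1,l+2\}$. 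Hence the characteristic polynomial factors as a product over the invariant coordinate blocks.

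First I would dispose of the spectator factors. For each admissible $q$ the pair of diagonal entries $\pm\sqrt{2}\,{\bf i}\,\alpha_q$ contributes exactly the factor $(\lambda-\sqrt{2}\,{\bf i}\,\alpha_q)(\lambda+\sqrt{2}\,{\bf i}\,\alpha_q)=\lambda^2+2\alpha_q^2$, which accounts for the products $\prod_{q}(\lambda^2+2\alpha_q^2)$ occurring in all three formulas. It then remains only to compute the characteristic polynomial of the small interacting block in each case.

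For part 1) the sole interaction is $d\pi(\widetilde{d}\,{\bf j}G_1)=-\sqrt{2}\,\widetilde{d}\,E_{1,l+1}$, producing a $2\times2$ block $\left(\begin{smallmatrix}0&-\sqrt{2}\,\widetilde{d}\\ \sqrt{2}\,\widetilde{d}&0\end{smallmatrix}\right)$ with characteristic polynomial $\lambda^2+2\widetilde{d}^2$. For part 2) the term $d\pi(\widetilde{c}\,E_{12})=\widetilde{c}(E_{12}+E_{l+1,l+2})$ splits into two uncoupled $2\times2$ blocks on $\{1,2\}$ and $\{l+1,l+2\}$, carrying the respective diagonal entries $+\sqrt{2}\,{\bf i}\,\alpha$ and $-\sqrt{2}\,{\bf i}\,\alpha$; their characteristic polynomials are $(\lambda-\sqrt{2}\,{\bf i}\,\alpha)^2+\widetilde{c}^2$ and $(\lambda+\sqrt{2}\,{\bf i}\,\alpha)^2+\widetilde{c}^2$, and multiplying them (most cleanly by writing the product as $(\lambda^2+\widetilde{c}^2-2\alpha^2)^2+8\alpha^2\lambda^2$) yields exactly $\lambda^4+2(\widetilde{c}^2+2\alpha^2)\lambda^2+(\widetilde{c}^2-2\alpha^2)^2$.

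The genuinely laborious step, and the main obstacle, is part 3), where $E_{12}$, ${\bf j}G_1$ and ${\bf j}G_2$ together lock the four indices $\{1,2,l+1,l+2\}$ into a single $4\times4$ block. Writing $\kappa=\frac{x_2-x_1}{x_1}$, this is the real matrix whose nonzero entries come from $\widetilde{c}(E_{12}+E_{l+1,l+2})$, $-\sqrt{2}\,\widetilde{d}\,E_{1,l+1}$ and $\sqrt{2}\,\kappa\widetilde{d}\,E_{2,l+2}$. I would expand its characteristic determinant along the first row; after collecting the three surviving cubic minors and simplifying, the coefficient of $\lambda^2$ reduces to $2\widetilde{c}^2+2\widetilde{d}^2+2\kappa^2\widetilde{d}^2$ and the constant term collapses to the perfect square $(\widetilde{c}^2+2\kappa\widetilde{d}^2)^2$, which is precisely the asserted quartic. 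Combined with the spectator factors from the second paragraph, this establishes all three identities.
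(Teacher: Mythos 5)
Your proposal is correct: each $d\pi(\widetilde{W}_i)$ does decompose into the $2\times 2$ spectator blocks $\lambda^2+2\alpha_q^2$ plus the small interacting block, and your stated outputs check out (in case 3 the $4\times 4$ block is real skew-symmetric, so its characteristic polynomial is $\lambda^4+\sigma\lambda^2+\mathrm{Pf}^2$ with $\sigma=\sum_{i<j}a_{ij}^2=2\bigl(\widetilde{c}^2+\widetilde{d}^2+\widetilde{d}^2(\tfrac{x_2-x_1}{x_1})^2\bigr)$ and Pfaffian $\widetilde{c}^2+2\widetilde{d}^2\tfrac{x_2-x_1}{x_1}$, exactly the asserted quartic). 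The paper offers no proof at all — the proposition is prefaced only by ``It easy to get the following'' — and your direct computation through the displayed formulas for $d\pi$ is precisely the routine verification the authors intend.
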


\begin{prop}\label{main} If $x_1<x_2<2x_1$, then
for arbitrary positive $c$ and $d,$ the vector
$$
W=X+Y+Z=cE_{12}+d{\bf j}G_1-\frac{x_2-x_1}{x_1}d{\bf j} G_2
$$
is a $\delta$-vector on $(G/H, \mu=\mu_{x_1,x_2})$.
\end{prop}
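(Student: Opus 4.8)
The plan is to argue by contradiction, using that $\Pol$ is constant on $\Ad(G)$-orbits. Assume that for some $c,d>0$ the vector $W=cE_{12}+d{\bf j}G_1-\frac{x_2-x_1}{x_1}d{\bf j}G_2$ is not a $\delta$-vector. Then Proposition \ref{osn3} furnishes a vector $\widetilde W$ in the $\Ad(G)$-orbit of $W$ of one of the forms $\widetilde W_1,\widetilde W_2,\widetilde W_3$, together with a strict inequality for its $\mathfrak p$-norm. Because $\widetilde W=\Ad(a)W$ for some $a\in Sp(l)$, and $d\pi$ carries $\Ad(a)$ on $sp(l)$ to conjugation by $\pi(a)$ on $su(2l)$, the matrices $d\pi(\widetilde W)$ and $d\pi(W)$ are conjugate; hence $\Pol(\widetilde W)=\Pol(W)$. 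The whole argument consists in showing that this equality contradicts each of the three inequalities, reading the roots off the factorizations in Proposition \ref{char}.

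First I would record $\Pol(W)$. Put $\beta=\frac{x_2-x_1}{x_1}$, so $0<\beta<1$ since $x_1<x_2<2x_1$. Item 3) of Proposition \ref{char}, with $\widetilde c=c$, $\widetilde d=d$ and all $\alpha_q=0$, gives $\Pol(W)=(z^4+2Az^2+B^2)\,z^{2(l-2)}$ (in the variable $z$), where $A=c^2+d^2(1+\beta^2)$ and $B=c^2+2d^2\beta$. Since $A-B=d^2(1-\beta)^2>0$ and $A+B>0$, the quartic factor has two distinct, strictly negative roots in the variable $z^2$, with sum $-2A$ and product $B^2$; thus $\Pol(W)$ has $z^2=0$ as a root of multiplicity exactly $l-2$ and two further distinct nonzero roots. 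I would also note that comparing the coefficients of $z^{2l-2}$ in the factorizations of Proposition \ref{char} gives $\langle\widetilde W_i,\widetilde W_i\rangle=A$ in every case (this coefficient equals $2A$ for $\Pol(W)$).

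Case 2) is immediate: $\langle\widetilde W_2,\widetilde W_2\rangle=\widetilde c^2+2\alpha^2+\sum_q\alpha_q^2=A$ yields $\widetilde c^2\le A$, whereas the required inequality is $\widetilde c^2>\frac{x_2}{x_1}d^2+c^2=(1+\beta)d^2+c^2$; but $A=c^2+d^2(1+\beta^2)<c^2+(1+\beta)d^2$ because $\beta^2<\beta$, a contradiction. Case 3) I would settle by root counting. Here $\Pol(\widetilde W_3)=(z^4+2\widetilde A z^2+\widetilde B^2)\prod_{q=3}^{l}(z^2+2\alpha_q^2)$, with $\widetilde A,\widetilde B$ the analogues of $A,B$; because $\widetilde B=0$ forces $\widetilde c=\widetilde d=0$, the two quartic roots vanish together or are both nonzero. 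A nonzero $\alpha_q$ would raise the number of nonzero $z^2$-roots above the two allowed by $\Pol(W)$ (the alternative $\widetilde c=\widetilde d=0$ is excluded, as it kills the $\mathfrak p$-norm), so all $\alpha_q=0$, the quartics coincide, and $\widetilde A=A$, $\widetilde B=B$ give $\widetilde c^2=c^2$, $\widetilde d^2=d^2$. This equalizes the $\mathfrak p$-norms, contradicting the strict inequality.

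The main obstacle is Case 1). Now $\Pol(\widetilde W_1)=(z^2+2\widetilde d^2)\prod_{q=2}^{l}(z^2+2\alpha_q^2)$, and matching with $\Pol(W)$ forces exactly two of $\widetilde d^2,\alpha_2^2,\dots,\alpha_l^2$ to be nonzero, their doubled values being the two roots $A\pm\sqrt{A^2-B^2}$ of $t^2-2At+B^2$. As the case requires $\widetilde d\neq0$, this yields only the bound $\widetilde d^2\le\frac{A+\sqrt{A^2-B^2}}{2}$, so I must still defeat the inequality $x_2\widetilde d^2>x_2 d^2+x_1 c^2$, i.e. $\widetilde d^2>d^2+\frac{c^2}{1+\beta}$. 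It therefore suffices to prove $\frac{A+\sqrt{A^2-B^2}}{2}<d^2+\frac{c^2}{1+\beta}$. Using $A^2-B^2=d^2(1-\beta)^2\bigl(2c^2+d^2(1+\beta)^2\bigr)$ and cancelling the positive factor $1-\beta$, this reduces to $d\sqrt{2c^2+d^2(1+\beta)^2}<\frac{c^2+d^2(1+\beta)^2}{1+\beta}$, and squaring makes the difference of the two sides equal to $\frac{c^4}{(1+\beta)^2}>0$. This closes Case 1) and finishes the proof. I expect the root-multiplicity bookkeeping and this last inequality to be the only delicate points; Cases 2) and 3) fall out once the invariance of $\Pol$ is in hand.
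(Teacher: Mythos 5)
Your proposal is correct and follows essentially the same route as the paper: assume $W$ is not a $\delta$-vector, invoke Proposition \ref{osn3}, use the $\Ad(G)$-invariance of $\Pol$ together with Proposition \ref{char}, and rule out each of the three forms by coefficient/root matching. The only real difference is that in Case 1 you prove the decisive inequality $\widetilde{d}^2\leq\frac{A+\sqrt{A^2-B^2}}{2}<d^2+\frac{c^2}{1+\beta}$ by a direct computation (reducing it to $\frac{c^4}{(1+\beta)^2}>0$), whereas the paper obtains the same bound via $2\widetilde{d}\leq\sqrt{A-B}+\sqrt{A+B}$ and then cites Lemma 8 of \cite{BerNik}, so your argument is slightly more self-contained.
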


\begin{proof}
Suppose that the vector $W=cE_{12}+d{\bf j}G_1-\frac{x_2-x_1}{x_1}d{\bf j} G_2$
is not a $\delta$-vector. Then according to Proposition \ref{osn3}
there is a vector $\widetilde{W}$ in the $\Ad(G)$-orbit of $W$, which has
one of the following forms:

1) $\widetilde{W}_1=\widetilde{d} {\bf j}G_1  +\sum\limits_{q=2}^l {\alpha}_q {\bf i} G_q$,
where $x_2\widetilde{d}^2 > x_2 d^2 +x_1 c^2$;

2) $\widetilde{W}_2=\widetilde{c} E_{12}  +
\alpha ({\bf i} G_1 +{\bf i} G_2)+\sum\limits_{q=3}^l {\alpha}_q {\bf i} G_q$,
where $x_1\widetilde{c}^2 > x_2 d^2 +x_1 c^2$;

3) $\widetilde{W}_3=
\widetilde{c}E_{12}+\widetilde{d}{\bf j}G_1-\frac{x_2-x_1}{x_1}\widetilde{d}{\bf j} G_2
+\sum\limits_{q=3}^l {\alpha}_q {\bf i} G_q$,
where $x_2\widetilde{d}^2+x_1\widetilde{c}^2 > x_2 d^2 +x_1 c^2$.

Note, that for the vector $W$ and a suitable vector $\widetilde{W}_i$ we have
$\Pol(W)=\Pol(\widetilde{W}_i)$, since these vector are in one and the same orbit of the group $\Ad(G)$.
Note, that
$$
\Pol(W)=\left[{\lambda}^4+2\left({c}^2+{d}^2+{d}^2\left(\frac{x_2-x_1}{x_1}\right)^2\right)
{\lambda}^2 +\left({c}^2+2{d}^2\frac{x_2-x_1}{x_1}\right)^2\right]\lambda^{2(l-2)}.
$$
Consider the above three cases separately.

1) In this case we apply the item 1) in Proposition \ref{char}.
Since $\Pol(W)=\Pol(\widetilde{W}_1)$, we see that there is exactly one ${\alpha}_q\neq 0$, and we have
$$
{c}^2+{d}^2+{d}^2\left(\frac{x_2-x_1}{x_1}\right)^2=\widetilde{d}^2+{\alpha}_q^2, \quad
{c}^2+2{d}^2\frac{x_2-x_1}{x_1}=2\widetilde{d}|{\alpha}_q|.
$$
It is easy to see that
$$
(\widetilde{d}-|{\alpha}_q|)^2=d^2\left(\frac{2x_1-x_2}{x_1}\right)^2, \quad
(\widetilde{d}+|{\alpha}_q|)^2=2c^2+d^2\left(\frac{x_2}{x_1}\right)^2.
$$
Therefore,
\begin{equation}
\label{wi}
2\widetilde{d}\leq |\widetilde{d}-|{\alpha}_q||+|\widetilde{d}+|{\alpha}_q||\leq
\sqrt{2c^2+d^2\left(\frac{x_2}{x_1}\right)^2}+d \frac{2x_1-x_2}{x_1}.
\end{equation}
One can easily check that for every real numbers $c, d, x_1, x_2$ with the properties
$c\neq 0, 2x_1>x_2>0$ the following inequality (see Lemma 8 in \cite{BerNik}) is true:
\begin{equation}
\label{le}
\left(|d|(2x_1-x_2)+\sqrt{c^2x_1^2+d^2x_2^2}\right)^2x_2< 2x_1^2(x_1c^2+2x_2d^2).
\end{equation}
Using the inequalities (\ref{wi}) and (\ref{le}), we get that
$$
4\widetilde{d}^2x_2\leq
\left( \sqrt{2c^2+d^2\left(\frac{x_2}{x_1}\right)^2}+d \frac{2x_1-x_2}{x_1} \right)^2x_2<
4(x_1c^2+x_2d^2),
$$
which contradicts to the inequality $x_2\widetilde{d}^2 > x_2 d^2 +x_1 c^2$.

2) In this case we apply the item 2) in Proposition \ref{char}.
We get ${\alpha}_q=0$ for all $q\geq 3$, and
$$
{c}^2+{d}^2+{d}^2\left(\frac{x_2-x_1}{x_1}\right)^2=\widetilde{c}^2+2{\alpha}^2, \quad
{c}^2+2{d}^2\frac{x_2-x_1}{x_1}=|\widetilde{c}^2-2{\alpha}^2|.
$$
Since $0<\frac{x_2-x_1}{x_1}<1$,
then $x_2>x_1\left(1+\left( \frac{x_2-x_1}{x_1}\right)^2\right)$, and
$$
x_1\widetilde{c}^2\leq x_1(\widetilde{c}^2+2{\alpha}^2)
= x_1
\left(c^2+{d}^2\left(1+\left(\frac{x_2-x_1}{x_1}\right)^2\right)\right)<
x_1c^2+x_2d^2,
$$
which contradicts to the inequality $x_1\widetilde{c}^2 > x_2 d^2 +x_1 c^2$.

3) In this case we apply the item 3) in Proposition \ref{char}.
It is easy to see that ${\alpha}_q=0$ for $q\geq 3$, and
$$
{c}^2+{d}^2+{d}^2\left(\frac{x_2-x_1}{x_1}\right)^2=\widetilde{c}^2+\widetilde{d}^2+\widetilde{d}^2
\left(\frac{x_2-x_1}{x_1}\right)^2, \quad
{c}^2+2{d}^2\frac{x_2-x_1}{x_1}=\widetilde{c}^2+2\widetilde{d}^2\frac{x_2-x_1}{x_1},
$$
which easily implies $d=\widetilde{d}$ and $c=\widetilde{c}$. The latter contradicts
to the inequality
$x_2\widetilde{d}^2+x_1\widetilde{c}^2 > x_2 d^2 +x_1 c^2$.
The proposition is proved.
\end{proof}

\begin{prop}\label{vvspom5}
If $x_1\leq x_2 \leq 2x_1$, then the Riemannian manifold
$(G/H=Sp(l)/U(1)\cdot Sp(l-1),\mu={\mu}_{x_1,x_2})$ is
$Sp(l)$-$\delta$-homogeneous.
\end{prop}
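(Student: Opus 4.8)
The plan is to reduce the whole statement to the single criterion of Proposition \ref{osn2} and then treat the three subcases $x_2=x_1$, $x_1<x_2<2x_1$, and $x_2=2x_1$ separately. By Proposition \ref{osn2} it suffices to show that for every pair of positive numbers $c,d$ the distinguished vector
$$
W=cE_{12}+d{\bf j}G_1-\frac{x_2-x_1}{x_1}d{\bf j}G_2
$$
is a $\delta$-vector on $(G/H,\mu_{x_1,x_2})$; once this is established, the conclusion that $(G/H,\mu)$ is $Sp(l)$-$\delta$-homogeneous follows at once, with $G=Sp(l)$.

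For $x_2=x_1$ the inner product (\ref{par}) is just $x_1\langle\cdot,\cdot\rangle|_{\mathfrak{p}}$, the restriction of a multiple of the bi-invariant $\Ad(Sp(l))$-invariant inner product; hence $(G/H,\mu)$ is $Sp(l)$-normal homogeneous, and every normal homogeneous manifold is $\delta$-homogeneous. For the open interval $x_1<x_2<2x_1$ the required statement about $W$ is precisely Proposition \ref{main}, so invoking Proposition \ref{osn2} settles this range immediately.

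The only remaining value is the endpoint $x_2=2x_1$, and here I would argue by continuity, exploiting that the defining inequality (\ref{dvect}) for a $\delta$-vector is a closed condition. Fix $x_1$ and positive $c,d$, and for $t\in(x_1,2x_1]$ let $W_t$ and $(\cdot,\cdot)_t$ denote the vector $W$ and the inner product (\ref{par}) with $x_2=t$. Set
$$
g(t)=\max_{a\in Sp(l)}\Bigl[(\Ad(a)(W_t)|_{\mathfrak{p}},\Ad(a)(W_t)|_{\mathfrak{p}})_t-(W_t|_{\mathfrak{p}},W_t|_{\mathfrak{p}})_t\Bigr].
$$
Since $Sp(l)$ is compact and the bracketed expression is jointly continuous in $(t,a)$, the function $g$ is continuous on $(x_1,2x_1]$; moreover $g(t)\geq 0$ always (take $a=e$), and $W_t$ is a $\delta$-vector exactly when $g(t)=0$, by (\ref{dvect}). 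Proposition \ref{main} gives $g(t)=0$ for all $t\in(x_1,2x_1)$, and continuity then forces $g(2x_1)=0$ as well. Thus $W_{2x_1}$ is a $\delta$-vector for every positive $c,d$, and Proposition \ref{osn2} delivers $Sp(l)$-$\delta$-homogeneity at $x_2=2x_1$ too.

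The main obstacle is exactly this endpoint: the proof of Proposition \ref{main} relies on the strict inequalities $0<\tfrac{x_2-x_1}{x_1}<1$ and on the sharp estimate (\ref{le}), which is valid only for $2x_1>x_2$, both degenerating at $x_2=2x_1$, so a direct repetition of that argument is unavailable. One is tempted instead to invoke Proposition \ref{vvspom0}, identifying $\mu_{x_1,2x_1}$ as the $SU(2l)$-normal metric; but that yields only $SU(2l)$-$\delta$-homogeneity, whereas the assertion concerns the smaller group $Sp(l)$, whose $\Ad$-orbits are smaller and hence make the $\delta$-condition a priori harder to satisfy. The closedness of (\ref{dvect}), propagated by the continuity argument above, is precisely what bridges this gap and produces $Sp(l)$-$\delta$-homogeneity at the boundary directly from the interior result.
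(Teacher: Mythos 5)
Your proposal is correct and takes essentially the same route as the paper: the paper's own proof splits into the same three cases, handling $x_1=x_2$ via $Sp(l)$-normality, the open interval $x_1<x_2<2x_1$ via the criterion of Proposition \ref{osn2} combined with Proposition \ref{main}, and the endpoint $x_2=2x_1$ by what it calls ``a limiting process.'' Your continuity argument for $g(t)$ is just a careful write-up of that limiting process, which the paper leaves as a one-line assertion, so the two proofs agree in substance.
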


\begin{proof}
If $x_1=x_2$, then the metric $\mu$ is $Sp(l)$-normal and,
therefore, it is $Sp(l)$-$\delta$-homogeneous. If we suppose that
$x_2 \in (x_1,2x_1)$, then the proof follows from Proposition \ref{osn1} and
from Proposition \ref{main}. The statement for $x_2=2x_1$ it is easy to get
through a limiting process.
\end{proof}

\begin{proof}[Proof of Theorem \ref{main1}]
If the Riemannian manifold $(Sp(l)/U(1)\cdot
Sp(l-1),\mu={\mu}_{x_1,x_2})$ is $\delta$-homogeneous, then by
Proposition 28 of \cite{BerNik} we get $x_1\leq x_2 \leq 2x_1$. On the
other hand, for $x_2=x_1$ and for $x_2=2x_1$ the metric $\mu$ is
$Sp(l)$-normal homogeneous and $SU(2l)$-normal homogeneous
respectively. From Proposition
\ref{vvspom5} we get, that the Riemannian manifold
$(Sp(l)/U(1)\cdot Sp(l-1),\mu={\mu}_{x_1,x_2})$ is
$\delta$-homogeneous for $2x_1 > x_2 > x_1$. The theorem is proved.
\end{proof}

\begin{remark}
The Riemannian manifolds $(Sp(l)/U(1)\cdot Sp(l-1),\mu={\mu}_{x_1,x_2}), l\geq 2,$ have positive
sectional curvatures and their (exact) pinching constant is $\varepsilon=(\frac{x_2}{4x_1})^2$ if
$0< x_2\leq 2x_1.$ For all other values of $x_1, x_2$ this statement is not true and sectional curvature
is not necessarily nonnegative \cite{Vol}.
\end{remark}

\end{document}